\definecolor{vegasgold}{rgb}{0.77, 0.7, 0.35}
\definecolor{darkgoldenrod}{rgb}{0.72, 0.53, 0.04}
\definecolor{gold(metallic)}{rgb}{0.83, 0.69, 0.22}
\tikzset{every loop/.style={min distance=10mm,looseness=10}}
\DeclareFontFamily{U}{wncy}{}
\DeclareFontShape{U}{wncy}{m}{n}{<->wncyr10}{}
\DeclareSymbolFont{mcy}{U}{wncy}{m}{n}
\DeclareMathSymbol{\Sh}{\mathord}{mcy}{"58}
\newtheorem{theorem}{Theorem}[section]
\newtheorem{lemma}[theorem]{Lemma}
\newtheorem{proposition}[theorem]{Proposition}
\newtheorem{corollary}[theorem]{Corollary}
\newtheorem{definition}[theorem]{Definition}
\newtheorem{assumption}[theorem]{Assumption}
\numberwithin{equation}{section}
\newtheorem{lthm}{Theorem} % theorems with letters (for intro)
\theoremstyle{remark}
\newtheorem{remark}[theorem]{Remark}
\begin{document}
\title[Iwasawa theory for branched $\mathbb{Z}_{p}$-towers of finite graphs]{Iwasawa theory for branched $\mathbb{Z}_{p}$-towers of finite graphs}

\author[R.~Gambheera]{Rusiru Gambheera}
\address{Rusiru Gambheera\newline Department of Mathematics, University of California Santa Barbara, CA 93106-3080, USA}
\email{rusiru@ucsb.edu}

\author[D.~Valli\`{e}res]{Daniel Valli\`{e}res}
\address{Daniel Valli\`{e}res\newline Mathematics and Statistics Department, California State University, Chico, CA 95929, USA}
\email{dvallieres@csuchico.edu}

\begin{abstract}
We initiate the study of Iwasawa theory for branched $\mathbb{Z}_{p}$-towers of finite connected graphs.  These towers are more general than what have been studied so far, since the morphisms of graphs involved are branched covers, a particular kind of harmonic morphisms of graphs.  We prove an analogue of Iwasawa's asymptotic class number formula for the $p$-part of the number of spanning trees in this setting.  Moreover, we find an explicit generator for the characteristic ideal of the torsion Iwasawa module governing the growth of the $p$-part of the number of spanning trees in such towers. 
\end{abstract}

\subjclass[2020]{Primary: 05C25; Secondary: 11R23} 
\date{\today} 
\keywords{Graph theory, Spanning trees, Iwasawa theory}

\maketitle
\tableofcontents

\section{Introduction} \label{Introduction}
Let $p$ be a rational prime and let 
$$K = K_{0} \subseteq K_{1} \subseteq K_{2} \subseteq \ldots \subseteq K_{n} \subseteq \ldots$$
be a tower of number fields for which $K_{n}/K$ is Galois with Galois group isomorphic to $\mathbb{Z}/p^{n}\mathbb{Z}$.  Iwasawa's famous asymptotic class number formula (\cite{Iwasawa:1959} and \cite{Iwasawa:1973}) shows the existence of $\mu,\lambda,n_{0} \in \mathbb{Z}_{\ge 0}$ and $\nu \in \mathbb{Z}$ such that when $n \ge n_{0}$, one has
$${\rm ord}_{p}(h(K_{n})) = \mu p^{n} + \lambda n + \nu,$$
where $h(K_{n})$ denotes the class number of $K_{n}$, and ${\rm ord}_{p}$ denotes the usual $p$-adic valuation on the field of rational numbers.

Motivated by the analogy between number theory and graph theory, one can study the variation of the $p$-part of the number of spanning trees as one goes up a $\mathbb{Z}_{p}$-tower of graphs
\begin{equation} \label{tower}
X = X_{0} \leftarrow X_{1} \leftarrow X_{2} \leftarrow \ldots \leftarrow X_{n} \leftarrow \ldots
\end{equation}
Such a $\mathbb{Z}_{p}$-tower consists of a sequence of covering maps $X_{n+1} \rightarrow X_{n}$ between finite connected graphs for which the composition $X_{n} \rightarrow X_{n-1} \rightarrow \ldots \rightarrow X_{0} = X$ is Galois with Galois group isomorphic to $\mathbb{Z}/p^{n}\mathbb{Z}$.  It turns out that in perfect analogy with Iwasawa's asymptotic class number formula, there exist $\mu,\lambda,n_{0} \in \mathbb{Z}_{\ge 0}$ and $\nu \in \mathbb{Z}$ such that
\begin{equation} \label{analogue_Iw}
{\rm ord}_{p}(\kappa(X_{n})) = \mu p^{n} + \lambda n + \nu,
\end{equation}
provided $n \ge n_{0}$, where now $\kappa(X_{n})$ is the number of spanning trees of $X_{n}$.  

A proof of this result is given in increasing generality in \cite{Vallieres:2021,mcgownvallieresII,mcgownvallieresIII}.  The approach therein is based on Ihara zeta and $L$-functions and their special value at $u=1$.  The Ihara zeta function $Z_{X}(u)$ of a finite graph is the reciprocal of a polynomial.  It is of the form $Z_{X}(u)^{-1} = (1-u^{2})^{-\chi(X)} h_{X}(u)$, where $\chi(X)$ denotes the Euler characteristic of $X$, and where $h_{X}(u)$ is some polynomial with integer coefficients given explicitly as the determinant of an operator via Ihara's determinant formula.  A result of Hashimoto (\cite{Hashimoto:1990}) gives
$$h_{X}^{'}(1) = -2 \chi(X) \kappa(X),$$
when $X$ is a finite connected graph.  Therefore, the special value at $u=1$ of these functions encodes much information about the invariant $\kappa(X)$.  These facts can be used to construct a power series $f(T) \in \mathbb{Z}_{p}\llbracket T \rrbracket$, and combined with the Artin formalism satisfied by the Ihara $L$-functions, one can get the Iwasawa invariants $\mu$ and $\lambda$ from the power series $f(T)$ thereby obtaining (\ref{analogue_Iw}).  

On the other hand, another proof of (\ref{analogue_Iw}) has been obtained by Gonet in \cite{Gonet:2022}.  Her approach is algebraic and makes use of the structure theorem for Iwasawa modules.  To every finite connected graph is associated a finite abelian group ${\rm Pic}^{0}(X)$ which is called the Picard group of degree zero of $X$ (also known as the Jacobian, the sandpile, or still the critical group of $X$).  It follows from a classical theorem of Kirchhoff in graph theory that its cardinality is precisely the invariant $\kappa(X)$.  Any cover of finite connected graphs $f:Y \rightarrow X$ induces a surjective group morphism $f_{*}:{\rm Pic}^{0}(Y) \rightarrow {\rm Pic}^{0}(X)$.  Thus from a $\mathbb{Z}_{p}$-tower of graphs such as in (\ref{tower}) above, one obtains a compatible system of maps ${\rm Pic}^{0}(X_{n+1})[p^{\infty}] \rightarrow {\rm Pic}^{0}(X_{n})[p^{\infty}]$, where we write $A[p^{\infty}]$ for the Sylow $p$-subgroup of a finite abelian group $A$.  Since each ${\rm Pic}^{0}(X_{n})[p^{\infty}]$ is a $\mathbb{Z}_{p}[\mathbb{Z}/p^{n}\mathbb{Z}]$-module, the inverse limit
\begin{equation}
{\rm Pic}^{0}_{\Lambda} = \varprojlim_{n \ge 0}{\rm Pic}^{0}(X_{n})[p^{\infty}]
\end{equation}
becomes a module over the Iwasawa algebra $\Lambda$ of $\mathbb{Z}_{p}$, and one can use the structure theorem for finitely generated $\Lambda$-modules up to pseudo-isomorphisms to obtain a module theoretical proof of (\ref{analogue_Iw}).  

The two approaches are intimately related to one another.  Using the usual non-canonical isomorphism $\Lambda \stackrel{\simeq}{\longrightarrow} \mathbb{Z}_{p}\llbracket T \rrbracket$ given by $\gamma \mapsto 1+T$, every $\Lambda$-module can be viewed as a $\mathbb{Z}_{p}\llbracket T \rrbracket$-module, and Kleine and M\"{u}ller proved (\cite[Theorem 5.2 and Remark 5.3]{Kleine/Muller:2023}) that 
\begin{equation} \label{mm}
{\rm char}_{\mathbb{Z}_{p}\llbracket T \rrbracket} ({\rm Pic}^{0}_{\Lambda}) \cdot (T) = (f(T)),
\end{equation}
where ${\rm char}_{\mathbb{Z}_{p}\llbracket T \rrbracket}(M)$ denotes the characteristic ideal of a finitely generated $\mathbb{Z}_{p}\llbracket T \rrbracket$-module $M$ and $f(T)$ is the power series constructed in \cite{Vallieres:2021,mcgownvallieresII,mcgownvallieresIII}.  For the module theoretical approach to both (\ref{analogue_Iw}) and (\ref{mm}), in addition to \cite{Gonet:2022} and \cite{Kleine/Muller:2023}, see also \cite{Kataoka:2024}. 
\begin{remark}
The equality (\ref{mm}) is not quite true, since the non-canonical isomorphism $\gamma \mapsto 1-T$ was used in \cite{mcgownvallieresIII} instead of $\gamma \mapsto 1 + T$.  So it is true when replacing $T$ with $-T$ on the right hand side of (\ref{mm}).  
\end{remark}

In the analogy between graphs and compact Riemann surfaces, covering maps of graphs correspond to unramified covers of compact Riemann surfaces.  On the other hand, branched (or ramified) covers of compact Riemann surfaces correspond to harmonic morphisms, a broader class of morphisms of graphs than covering maps.  Harmonic morphisms were introduced in \cite{Urakawa:2000} and studied further by many authors including for instance in \cite{Baker/Norine:2009} and \cite{Corry:2012}.  The goal of this paper is to study more general $\mathbb{Z}_{p}$-towers of graphs than the ones that have been considered so far in the literature.  We will consider towers 
\begin{equation*}
X = X_{0} \leftarrow X_{1} \leftarrow X_{2} \leftarrow \ldots \leftarrow X_{n} \leftarrow \ldots
\end{equation*}
such as (\ref{tower}) above, but where the maps $X_{n+1} \rightarrow X_{n}$ will be \emph{branched} (or \emph{ramified}) covers of finite graphs rather than usual (unramified) covering maps.  Branched covers of graphs form a class of graph morphisms that lies in between (unramified) covering maps and harmonic morphisms.  We explain in \cref{towers} how to construct such branched $\mathbb{Z}_{p}$-towers of finite connected graphs by modifying the notion of voltage assignment accordingly.  In this paper, we use the module theoretical approach.  There should be a connection with Ihara zeta and $L$-functions as well in this situation, as studied for instance in \cite{Bass:1992} and \cite{Zakharov:2021}, and we would like to look at this point of view in a separate work.  The reason why we adopt the module theoretical approach here is that given a branched cover of finite connected graphs $Y \rightarrow X$, it induces a natural surjective group morphism ${\rm Pic}^{0}(Y) \rightarrow {\rm Pic}^{0}(X)$ just as in the unramified situation.  Therefore, starting with a branched $\mathbb{Z}_{p}$-tower of graphs, we obtain again a compatible system of morphisms ${\rm Pic}^{0}(X_{n+1})[p^{\infty}] \rightarrow {\rm Pic}^{0}(X_{n})[p^{\infty}]$, and we can consider the $\Lambda$-module
$${\rm Pic}_{\Lambda}^{0} = \varprojlim_{n \ge 0} {\rm Pic}^{0}(X_{n})[p^{\infty}]. $$  
The study of this Iwasawa module leads us to our first main result.
\begin{lthm}[\cref{analogue_iwasawa}] \label{main1}
Let 
$$X = X_{0} \leftarrow X_{1} \leftarrow X_{2} \leftarrow \ldots \leftarrow X_{n} \leftarrow \ldots$$
be a branched $\mathbb{Z}_{p}$-tower of finite connected graphs arising from a voltage assignment as explained in \cref{towers}.  Then ${\rm Pic}_{\Lambda}^{0}$ is a finitely generated torsion $\Lambda$-module.  Moreover, letting $\mu = \mu({\rm Pic}_{\Lambda}^{0})$ and $\lambda = \lambda({\rm Pic}_{\Lambda}^{0})$, there exist $n_{0}\in \mathbb{Z}_{\ge 0}$ and $\nu \in \mathbb{Z}$ such that
$${\rm ord}_{p}(\kappa(X_{n})) = \mu p^{n} + \lambda n + \nu, $$
when $n \ge n_{0}$. 
\end{lthm}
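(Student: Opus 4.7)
My plan is to adapt the module-theoretic proof of Gonet \cite{Gonet:2022} to the branched setting. Write $\Gamma = \mathrm{Gal}(X_\infty/X_0) \cong \mathbb{Z}_p$ with topological generator $\gamma$, $\Gamma_n = \Gamma^{p^n}$, $\Lambda = \mathbb{Z}_p\llbracket \Gamma \rrbracket \cong \mathbb{Z}_p\llbracket T \rrbracket$ via $\gamma \mapsto 1+T$, and $\omega_n = (1+T)^{p^n}-1$; put $M := \mathrm{Pic}_\Lambda^0$. The argument splits into three steps: (a) present $M$ using a tower-level branched Laplacian, which makes finite generation and torsion transparent; (b) establish a control theorem identifying $M/\omega_n M$ with $\mathrm{Pic}^0(X_n)[p^\infty]$ up to bounded defect; (c) apply the Iwasawa structure theorem and count.

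For (a), I would assemble the level-$n$ branched Laplacians $L_n$ (whose definition is prepared in \cref{towers}) into a $\Lambda$-linear map $L$ between finitely generated free $\Lambda$-modules so that $L \bmod \omega_n$ recovers $L_n$, up to the standard quotient by a distinguished vertex. Then $M$ is a quotient of $\mathrm{coker}(L)$, hence finitely generated; torsion reduces to $\det(L)$ being a non-zero element of the total quotient ring of $\Lambda$, which upon specialization $T = 0$ becomes Kirchhoff's theorem applied to the branched Laplacian of $X_0$.

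For (b), the functorial surjections $f_\ast \colon \mathrm{Pic}^0(X_{n+1})[p^\infty] \to \mathrm{Pic}^0(X_n)[p^\infty]$ (granted in the branched case by the material of \cref{towers}) yield a natural comparison map $M/\omega_n M \to \mathrm{Pic}^0(X_n)[p^\infty]$. Fitting this into an exact sequence of $\Lambda/\omega_n\Lambda$-modules and running cohomology of the finite cyclic group $\Gamma/\Gamma_n$ on the divisor lattices should bound its kernel and cokernel uniformly in $n$ via a Herbrand-quotient argument. Step (c) is then standard: the structure theorem supplies a pseudo-isomorphism $M \sim E = \bigoplus_i \Lambda/(p^{a_i}) \oplus \bigoplus_j \Lambda/(f_j(T))$ with $f_j$ distinguished, and a direct computation gives $\mathrm{ord}_p |E/\omega_n E| = \mu p^n + \lambda n + c$ for $n$ large; combining with (b) and the identity $\kappa(X_n) = |\mathrm{Pic}^0(X_n)|$ (Kirchhoff again) produces the asymptotic.

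The main obstacle is the control theorem (b). In the unramified setting, Galois descent for covering maps makes the comparison essentially tautological, but for branched covers the push-forward $f_\ast$ is not the transfer attached to a free action on vertices, and the ramification divisor disturbs the descent. I expect the defect to be governed by a divisor supported at the branch locus, which stays bounded because the branch locus has uniformly bounded cardinality throughout the tower (coming from the voltage-assignment construction of \cref{towers}); making this precise and extracting a clean cohomological bound is where the technical work will concentrate.
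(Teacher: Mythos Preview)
Your overall strategy is in the right spirit, but step~(a) hides the key difficulty and, as stated, is incorrect. In the branched setting there is no single $\Lambda$-linear map $L$ between free $\Lambda$-modules whose reduction modulo $\omega_n$ is the level-$n$ Laplacian. The obstruction is visible already in the generators of ${\rm Pr}_p(X_n)$: for an unramified vertex $v_i$ one has the expected generator $P_{i,n}$, but for a ramified vertex $v_i$ with $I_i = \Gamma^{p^{k_i}}$ the generator is $\sum_{\sigma\in I_{i,n}}\sigma\cdot P_{i,n}$, which lifts to $\omega_{n,k_i}P_{i,\infty}$ in ${\rm Div}_\Lambda(X_\infty)$ and therefore depends on $n$. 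So the principal module at level $n$ is not the reduction of a fixed $\Lambda$-submodule; your torsion argument via $\det(L)\vert_{T=0}$ and Kirchhoff also breaks down for this reason (and in any case the full Laplacian has vanishing determinant).

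The paper circumvents both (a) and your anticipated obstacle in (b) by separating the two contributions. One defines ${\rm Pr}_\Lambda^{unr}$ (generated by $P_{i,\infty}$ for unramified $v_i$) and an $n$-dependent piece ${\rm Pr}_{\Lambda,n}^{ram}$ (generated by $\omega_{n,k_i}P_{i,\infty}$), and proves an \emph{exact} identification ${\rm Pic}_p^0(X_n)\simeq {\rm Div}_\Lambda^0(X_\infty)/N_n$ with $N_n=\omega_n{\rm Div}_\Lambda(X_\infty)+{\rm Pr}_{\Lambda,n}^{ram}+{\rm Pr}_\Lambda^{unr}$. Passing to the limit gives $M:={\rm Pic}_\Lambda^0\simeq {\rm Div}_\Lambda^0(X_\infty)/{\rm Pr}_\Lambda^{unr}$, but the crucial observation is that for $k=\max_i k_i$ one can factor $N_n=\omega_{n,k}N+{\rm Pr}_\Lambda^{unr}$ with $N$ \emph{independent of $n$}. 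Setting $A=N/{\rm Pr}_\Lambda^{unr}\subseteq M$, one obtains exactly ${\rm Pic}_p^0(X_n)\simeq M/\omega_{n,k}A$ and hence a short exact sequence
\[
0\longrightarrow A/\omega_{n,k}A\longrightarrow {\rm Pic}_p^0(X_n)\longrightarrow M/A\longrightarrow 0
\]
with $M/A$ finite and independent of $n$. The standard Iwasawa lemma for $A/\omega_{n,k}A$ then yields the asymptotic, and $M\sim A$ (finite cokernel) transfers the invariants to $M$. No control theorem, Herbrand quotient, or cohomological bound is needed; the ``bounded defect'' you were worried about is replaced by the constant $|M/A|$ arising from the exact presentation.
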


We then move on to study the characteristic ideal of ${\rm Pic}_{\Lambda}^{0}$, and we prove the following result which we now state in an imprecise form.  See \cref{main_conj} and \cref{main_conj_power_sr} for the precise formulation.
\begin{lthm}[\cref{main_conj} and \cref{main_conj_power_sr}] \label{main3}
Let 
$$X = X_{0} \leftarrow X_{1} \leftarrow X_{2} \leftarrow \ldots \leftarrow X_{n} \leftarrow \ldots$$
be a branched $\mathbb{Z}_{p}$-tower of finite connected graphs arising from a voltage assignment as explained in \cref{towers}.  We define an operator $\Delta$ on a free $\mathbb{Z}_{p}\llbracket T \rrbracket$-module $M$ of finite rank for which
$${\rm char}_{\mathbb{Z}_{p}\llbracket T \rrbracket}({\rm Pic_{\Lambda}^{0}}) \cdot (T) = ({\rm det}(\Delta)). $$
Moreover, ${\rm det}(\Delta) \in \mathbb{Z}_{p}\llbracket T \rrbracket$ can be explicitly calculated in terms of the voltage assignment.
\end{lthm}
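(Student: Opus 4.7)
The plan is to adapt the combinatorial Laplacian construction to the branched setting and then take inverse limits. At each level $n$, the finite group ${\rm Pic}^{0}(X_{n})$ admits a Kirchhoff-type presentation as the cokernel of a Laplacian operator $L_{n}$ acting on the free abelian group on the vertices of $X_{n}$, modulo the degree relation. In the branched situation one must enrich $L_{n}$ with the local contribution of the ramification data encoded by the voltage assignment of \cref{towers}; because this assignment is equivariant, $L_{n}$ descends to a square matrix with entries in $\mathbb{Z}_{p}[\mathbb{Z}/p^{n}\mathbb{Z}]$ of size $|V(X)|$.

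Next I would pass to the inverse limit. The levelwise Laplacians are compatible under the natural norm maps of the tower, and so their inverse limit defines an operator $\Delta$ on a free $\Lambda$-module $M$ of rank $|V(X)|$. Under the identification $\Lambda \cong \mathbb{Z}_{p}\llbracket T \rrbracket$ given by $\gamma \mapsto 1+T$, the entries of $\Delta$ become explicit power series built from expressions of the form $(1+T)^{\alpha_{e}}$, where $\alpha_{e} \in \mathbb{Z}_{p}$ is the voltage of the edge $e$, plus additional terms recording the ramification data at each vertex. One then identifies ${\rm Pic}_{\Lambda}^{0}$ with the cokernel of $\Delta$ on $M$, modulo the extra degree relation. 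Since $\Delta$ is a square matrix on a free module of finite rank and ${\rm Pic}_{\Lambda}^{0}$ is torsion by \cref{main1}, we have $\det(\Delta) \ne 0$, and the Fitting ideal of the cokernel of $\Delta$ on $M$ equals $(\det(\Delta))$. The extra factor of $(T)$ on the left-hand side accounts for the degree relation: at each finite level, ${\rm Pic}^{0}(X_{n})$ is the cokernel of $L_{n}$ modulo one further relation coming from the all-ones vector, and in the inverse limit this additional relation becomes multiplication by $\gamma - 1 = T$. This yields ${\rm char}_{\mathbb{Z}_{p}\llbracket T \rrbracket}({\rm Pic}_{\Lambda}^{0}) \cdot (T) = (\det(\Delta))$. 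The explicit formula for $\det(\Delta)$ in terms of the voltage assignment then follows by expanding a $|V(X)| \times |V(X)|$ determinant whose entries are the power series above, giving a branched analogue of the classical Kirchhoff--Ihara determinant formula twisted by the voltages and by the ramification data.

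The main obstacle will be the bookkeeping at branch points: one must verify that the modified Laplacian, with the extra terms coming from the ramification, is the correct functorial object for branched covers, and that its inverse limit computes ${\rm Pic}_{\Lambda}^{0}$ exactly rather than merely up to pseudo-isomorphism. A secondary subtlety is tracking the compatibility of the norm maps with the ramification contributions across the tower, so that the characteristic-ideal equality holds on the nose and the separation into the degree factor $(T)$ and the characteristic ideal of ${\rm Pic}_{\Lambda}^{0}$ is canonical.
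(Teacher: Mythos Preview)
Your outline captures the broad shape of the argument, but there is a genuine gap at the branch points. You assert that the Laplacian $L_{n}$ ``descends to a square matrix with entries in $\mathbb{Z}_{p}[\mathbb{Z}/p^{n}\mathbb{Z}]$ of size $|V(X)|$.'' In the branched setting this is false: by the paper's computation (\ref{struc_mod_finite}), the module ${\rm Div}_{p}(X_{n})$ is \emph{not} free over $\mathbb{Z}_{p}[\Gamma_{n}]$; it has a free summand of rank $r$ (the number of unramified vertices) and non-free summands $\mathbb{Z}_{p}[\Gamma_{k_{i}}]$ at the ramified vertices. So the Laplacian at level $n$ is an endomorphism of a non-free module, there is no evident square matrix over $\mathbb{Z}_{p}[\Gamma_{n}]$, and your inverse-limit-of-Laplacians step has nothing to limit.

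The paper sidesteps this by introducing an auxiliary object: the \emph{unramified} infinite graph $X_{\infty}^{unr}$, whose module ${\rm Div}_{\Lambda}(X_{\infty}^{unr})$ \emph{is} free of rank $|V(X)|$ over $\Lambda$. The immersion $\iota: X_{\infty}^{unr} \to X_{\infty}$ induces a surjection whose kernel is exactly $\bigoplus_{i>r} \omega_{k_{i}}\Lambda \cdot w_{i,\infty}^{unr}$. The operator $\Delta$ is then defined on this free module to act as the Laplacian $\mathcal{L}_{\Lambda}^{unr}$ on the unramified basis vectors and as multiplication by $\omega_{k_{i}}$ on each ramified basis vector; this second piece encodes the ramification, not through extra Laplacian terms, but through the relations that cut ${\rm Div}_{\Lambda}(X_{\infty})$ out of the free module. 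The cokernel of $\Delta$ is then ${\rm Pic}_{\Lambda}$ (not ${\rm Pic}_{\Lambda}^{0}$), via \cref{exp_desc}. The factor $(T)$ arises cleanly from the short exact sequence $0 \to {\rm Pic}_{\Lambda}^{0} \to {\rm Pic}_{\Lambda} \to \mathbb{Z}_{p} \to 0$ of \cref{pic_vs_zero} and ${\rm char}_{\Lambda}(\mathbb{Z}_{p}) = (\omega_{0})$, rather than from an inverse limit of all-ones vectors. Your ``modified Laplacian with extra ramification terms'' is morally this $\Delta$, but the mechanism is not that the ramification contributes extra Laplacian entries; it contributes kernel relations $\omega_{k_{i}}$ that sit alongside the Laplacian on a larger free module.
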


The paper is organized as follows.  In \cref{Preliminaries}, we gather together some basic facts that will be used throughout the paper.  We remind the reader about Serre's formalism for graphs in \cref{graph_theory}, about the Picard group of degree zero in \cref{picard1}, and about group acting on graphs in \cref{group}.  When a group acts on a graph, the theory becomes an equivariant one, and we explain this for the Picard group in \cref{equivariant}.  Ultimately, we are interested in the $p$-part of the number of spanning trees, and thus we tensor everything with $\mathbb{Z}_{p}$ over $\mathbb{Z}$ in \cref{tensoring_z_p}.  We remind the reader about the Iwasawa algebra and Iwasawa modules in \cref{Iwasawa_algebra}.  In \cref{branched_cov}, we introduce the notion of branched cover of graphs $Y \rightarrow X$, and we explain a useful induced map on the Picard groups ${\rm Pic}^{0}(Y) \rightarrow {\rm Pic}^{0}(X)$.  In \cref{voltage}, we introduce the branched $\mathbb{Z}_{p}$-towers of graphs that we will be studying in this paper.  We construct branched covers explicitly using voltage assignments in \cref{basic} and we explain how this construction behaves functorially in \cref{functoriality}.  This allows us to construct branched $\mathbb{Z}_{p}$-towers of finite graphs in \cref{towers}.  To every branched $\mathbb{Z}_{p}$-tower of finite graphs is associated an unramified $\mathbb{Z}_{p}$-tower.  This is explained in \cref{immersions}, \cref{connectedness}, and \cref{useful_lemma}.  In \cref{iwasawa_theory}, we study the Iwasawa module ${\rm Pic}_{\Lambda}^{0}$ associated to a branched $\mathbb{Z}_{p}$-tower of finite connected graphs.  We prove the analogue of Iwasawa's asymptotic class number formula in \cref{asymptotic1}, and we find a generator for the characteristic ideal of ${\rm Pic}_{\Lambda}^{0}$ in \cref{main_conj1}.  This allows us to end this paper with a few numerical examples in \cref{examples}.

\subsection*{Acknowledgments}

DV would like to thank the Max Planck Institute for Mathematics in Bonn for its hospitality and financial support, where this project started.  DV would also like to thank Cristian Popescu and the department of mathematics at the University of California, San Diego, for hosting a sabbatical visit in 2023.  It is a pleasure for both authors to thank Kwun Chung, Nandagopal Ramachandran, Xie Wu, Wei Yin, and Cristian Popescu for several stimulating discussions.

%\section{Preliminaries}
\section{Preliminaries} \label{Preliminaries}
%\subsection{Graph theory}
\subsection{Graph theory} \label{graph_theory}
Throughout this paper, we use Serre's formalism for graphs (see \cite{Serre:1977} and \cite{Sunada:2013}).  Thus, a graph $X$ consists of a vertex set $V_{X}$ and a set of directed edges $\mathbf{E}_{X}$ equipped with an incidence function ${\rm inc}:\mathbf{E}_{X} \rightarrow V_{X} \times V_{X}$ given by $e \mapsto {\rm inc}(e) = (o(e),t(e))$ and an inversion function ${\rm inv}: \mathbf{E}_{X} \rightarrow \mathbf{E}_{X}$ also denoted by $e \mapsto \bar{e}$ satisfying the following conditions:
\begin{enumerate}
\item $\bar{e} \neq e$,
\item $\bar{\bar{e}} = e$,
\item $o(\bar{e}) = t(e)$ and $t(\bar{e}) = o(e)$,
\end{enumerate}
for all $e \in \mathbf{E}_{X}$.  The vertex $o(e)$ is called the origin and the vertex $t(e)$ the terminus of the directed edge $e$.  Given $v \in V_{X}$, we let
$$\mathbf{E}_{X,v} = \{e \in \mathbf{E}_{X}\, : \, o(e) = v \}. $$
We will also write $\mathbf{E}_{X,v}^{o}$ instead of $\mathbf{E}_{X,v}$ at times.  Similarly to $\mathbf{E}_{X,v}^{o} = \mathbf{E}_{X,v}$, one can define 
$$\mathbf{E}_{X,v}^{t} = \{e \in \mathbf{E}_{X} : t(e) = v \},$$
and the inversion map induces bijections
$${\rm inv}: \mathbf{E}_{X,v}^{o} \rightarrow \mathbf{E}_{X,v}^{t}$$
for all $v \in V_{X}$.  The set of undirected edges is obtained by identifying $e$ with $\bar{e}$ and will be denoted by $E_{X}$.  An orientation $S$ for $X$ consists of the image of a section for the natural map $\mathbf{E}_{X} \rightarrow E_{X}$.  In other words, an orientation is obtained by choosing a direction for each undirected edge.  A graph is finite if both $V_{X}$ and $\mathbf{E}_{X}$ are finite sets, and locally finite if $\mathbf{E}_{X,v}$ is finite for all $v \in V_{X}$.  In this case, we let
$${\rm val}_{X}(v) = |\mathbf{E}_{X,v}| = |\mathbf{E}_{X,v}^{t}|, $$
and this quantity is called the valency (or degree) of the vertex $v$.  A path in $X$ is a sequence of directed edges $c = e_{1} \cdot \ldots \cdot e_{n}$ for which one has $t(e_{i}) = o(e_{i+1})$ for all $i = 1,\ldots,n-1$.  The vertices $o(e_{1})$ and $t(e_{n})$ are called the origin and the terminus of the path $c$ and will be denoted by $o(c)$ and $t(c)$, respectively.  The graph $X$ is called connected if given any two distinct vertices $v_{1}$ and $v_{2}$ of $X$, there exists a path $c$ in $X$ satisfying $o(c) = v_{1}$ and $t(c) = v_{2}$.
\begin{definition} \label{morphism}
Let $X$ and $Y$ be graphs.  A morphism $f:X \rightarrow Y$ of graphs consists of two functions $f_{V}:V_{X} \rightarrow V_{Y}$ and $f_{E}:\mathbf{E}_{X} \rightarrow \mathbf{E}_{Y}$ satisfying
\begin{enumerate}
\item $f_{V}(o(e)) = o(f_{E}(e))$,
\item $f_{V}(t(e)) = t(f_{E}(e))$,
\item $\overline{f_{E}(e)} = f_{E}(\bar{e})$,
\end{enumerate}
for all $e \in \mathbf{E}_{X}$.  
\end{definition}
We will usually denote both $f_{V}$ and $f_{E}$ simply by $f$.  Note that given a morphism of graphs $f:X \rightarrow Y$ and any vertex $v \in V_{X}$, the restriction $f|_{\mathbf{E}_{X,v}}$ induces a function
$$f|_{\mathbf{E}_{X,v}}: \mathbf{E}_{X,v} \rightarrow  \mathbf{E}_{Y,f(v)}.$$
\begin{remark}
There are more general notions of morphisms of graphs in the literature than Definition \ref{morphism} for which one allows edges to be mapped to vertices.  See for instance \cite[Definition 2.1]{Corry:2012} in the case where the graphs are assumed to have no loops.  In this paper, we restrict ourselves to Definition \ref{morphism}.
\end{remark}

%\subsection{The Picard group of degree zero}
\subsection{The Picard group of degree zero} \label{picard1}
Let $X = (V_{X},\mathbf{E}_{X})$ be a graph.  We define ${\rm Div}(X)$ to be the free abelian group on $V_{X}$, and we have a natural surjective group morphism $s:{\rm Div}(X) \twoheadrightarrow \mathbb{Z}$ defined on vertices via $v \mapsto s(v) = 1$.  We let ${\rm Div}^{0}(X)$ be the kernel of $s$ so that we have a short exact sequence of abelian groups
\begin{equation} \label{ses_basic}
0 \rightarrow {\rm Div}^{0}(X) \rightarrow {\rm Div}(X) \stackrel{s}{\rightarrow} \mathbb{Z} \rightarrow 0.
\end{equation}

If $X$ is locally finite, then we define a few operators on ${\rm Div}(X)$ as follows.  The valency (or degree) operator $\mathcal{D}_{X}$ is defined on vertices via
$$v \mapsto \mathcal{D}_{X}(v) = {\rm val}_{X}(v)v, $$
whereas the adjacency operator $\mathcal{A}_{X}$ is defined on vertices via
$$v \mapsto \mathcal{A}_{X}(v) = \sum_{v \in \mathbf{E}_{X,v}}t(e).$$
The Laplacian operator $\mathcal{L}_{X}:{\rm Div}(X) \rightarrow {\rm Div}(X)$ is then defined to be 
$$\mathcal{L}_{X} = \mathcal{D}_{X} - \mathcal{A}_{X}.$$
Moreover, we define ${\rm Pr}(X) = {\rm Im}(\mathcal{L}_{X})$.  It is simple to check from the definitions that ${\rm Pr}(X)$ is a subgroup of ${\rm Div}^{0}(X)$.  In what follows, we let
$${\rm Pic}(X) = {\rm Div}(X)/{\rm Pr}(X) \text{ and } {\rm Pic}^{0}(X) = {\rm Div}^{0}(X)/{\rm Pr}(X).$$
We shall refer to ${\rm Pic}(X)$ as the Picard group of $X$, and to ${\rm Pic}^{0}(X)$ as the Picard group of degree zero of $X$.
\begin{theorem}
If $X$ is a finite connected graph, then one has a short exact sequence of abelian groups
\begin{equation} \label{ses2}
0 \longrightarrow \mathbb{Z}\sum_{v \in V_{X}}v \longrightarrow {\rm Div}(X) \stackrel{\mathcal{L}_{X}}{\longrightarrow} {\rm Pr}(X) \longrightarrow 0,
\end{equation}
so that ${\rm rank}_{\mathbb{Z}}({\rm Pr}(X)) = {\rm rank}_{\mathbb{Z}}({\rm Div}^{0}(X)) = |V_{X}|-1$ from which it follows that ${\rm Pic}^{0}(X)$ is a finite abelian group.  Moreover $|{\rm Pic}^{0}(X)| = \kappa(X)$, where $\kappa(X)$ is the number of spanning trees of $X$.
\end{theorem}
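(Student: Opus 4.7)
The plan is to peel off the four assertions of the theorem in sequence, with the first three being rank/linear-algebra bookkeeping and the last one being an appeal to Kirchhoff's matrix-tree theorem.

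First, I would establish the exactness of (\ref{ses2}) by computing $\ker(\mathcal{L}_X)$ on $\mathrm{Div}(X)$. Viewing $\mathcal{L}_X$ as an integer matrix with respect to the basis $V_X$, its rows sum to zero at each vertex by the identity $\mathcal{D}_X(v)-\mathcal{A}_X(v) = \mathrm{val}_X(v)\, v - \sum_{e \in \mathbf{E}_{X,v}} t(e)$, so the all-ones divisor $\sum_{v \in V_X} v$ lies in the kernel. For the reverse inclusion, extend scalars to $\mathbb{Q}$ and write the standard argument that any $D = \sum a_v v$ in the kernel must have $a_v$ constant on connected components: pick a vertex $v_0$ where $a_v$ attains its maximum and use that the $v_0$-th coordinate of $\mathcal{L}_X(D)$ equals $\sum_{e \in \mathbf{E}_{X,v_0}}(a_{v_0} - a_{t(e)}) = 0$ forces equality along every edge out of $v_0$, then iterate using connectedness of $X$. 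Since $D$ has integer coefficients, the common value is an integer, giving $\ker(\mathcal{L}_X) = \mathbb{Z}\sum_{v} v$. The surjection onto $\mathrm{Pr}(X) = \mathrm{Im}(\mathcal{L}_X)$ is tautological, so (\ref{ses2}) is exact.

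Second, the rank statements follow by counting. The sequence (\ref{ses2}) gives $\mathrm{rank}_{\mathbb{Z}}(\mathrm{Pr}(X)) = |V_X| - 1$, while (\ref{ses_basic}) gives $\mathrm{rank}_{\mathbb{Z}}(\mathrm{Div}^0(X)) = |V_X|-1$. Since $\mathrm{Pr}(X) \subseteq \mathrm{Div}^0(X)$ is a full-rank subgroup of a finitely generated free $\mathbb{Z}$-module, the quotient $\mathrm{Pic}^0(X) = \mathrm{Div}^0(X)/\mathrm{Pr}(X)$ is a finite abelian group.

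Third and main step: identifying the order $|\mathrm{Pic}^0(X)|$ with $\kappa(X)$. The natural approach is via invariant factors. Put $\mathcal{L}_X$ in Smith normal form; because $\mathrm{Pic}(X) = \mathrm{Div}(X)/\mathrm{Pr}(X)$ is the cokernel of $\mathcal{L}_X$ and the latter has a one-dimensional kernel, its Smith form has exactly one zero invariant factor and the remaining $|V_X|-1$ nonzero invariant factors $d_1, \ldots, d_{|V_X|-1}$. The degree map $s$ in (\ref{ses_basic}) splits the free $\mathbb{Z}$-factor of $\mathrm{Pic}(X)$ off $\mathrm{Pic}^0(X)$, so $|\mathrm{Pic}^0(X)| = \prod_i d_i$. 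Standard linear algebra identifies $\prod_i d_i$ with the greatest common divisor of all $(|V_X|-1)\times(|V_X|-1)$ minors of the matrix of $\mathcal{L}_X$; since all such principal minors are equal (a well-known consequence of $\ker(\mathcal{L}_X)$ containing the all-ones vector, which forces cofactors to agree), this common value is itself the reduced Laplacian determinant $\det(\widetilde{\mathcal{L}_X})$ obtained by deleting any one row and the corresponding column.

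The last ingredient is Kirchhoff's classical matrix-tree theorem, which I would cite rather than reprove: $\det(\widetilde{\mathcal{L}_X}) = \kappa(X)$. Combining this with the invariant factor computation yields $|\mathrm{Pic}^0(X)| = \kappa(X)$, completing the proof. The only genuinely nontrivial input is the matrix-tree theorem; the rest is diagram chasing with (\ref{ses_basic}) and (\ref{ses2}) together with the Smith normal form of the integer matrix $\mathcal{L}_X$.
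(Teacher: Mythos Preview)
Your proposal is correct and follows essentially the same approach as the paper. The kernel computation via the maximum principle and connectedness is the paper's argument verbatim (the paper phrases it as $m_v = \frac{1}{{\rm val}_X(v)}\sum_{e \in \mathbf{E}_{X,v}^t} m_{o(e)}$, you phrase it as $\sum_{e \in \mathbf{E}_{X,v_0}}(a_{v_0}-a_{t(e)})=0$, which are equivalent via the inversion bijection); the paper then simply cites Kirchhoff's theorem for $|{\rm Pic}^0(X)|=\kappa(X)$, whereas you supply the intermediate Smith normal form / cofactor step before invoking the matrix-tree theorem, which is a bit more detail but not a different route.
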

\begin{proof}
On one hand, we have 
\begin{equation*}
\begin{aligned}
\mathcal{L}_{X}\left(\sum_{v \in V_{X}}v \right) &= \sum_{v \in V_{X}} {\rm val}_{X}(v)v - \sum_{v \in V_{X}} \sum_{e \in \mathbf{E}_{X,v}}t(e)\\
&= \sum_{v \in V_{X}} {\rm val}_{X}(v)v - \sum_{e \in \mathbf{E}_{X}} t(e) \\
&= \sum_{v \in V_{X}} {\rm val}_{X}(v)v - \sum_{v \in V_{X}} \sum_{e \in \mathbf{E}_{X,v}^{t}}v\\
&=0.
\end{aligned}
\end{equation*}
On the other hand, if $D = \sum_{v \in V_{X}} m_{v}v \in {\rm ker}(\mathcal{L}_{X})$, then a simple calculation shows that
$$m_{v} = \frac{1}{{\rm val}_{X}(v)} \sum_{e \in \mathbf{E}_{X,v}^{t}}m_{o(e)} $$
for all $v \in V_{X}$.  Choosing $v$ so that $m_{v}$ is maximal implies that $m_{o(e)} = m_{v}$ for all $e \in \mathbf{E}_{X,v}^{t}$, and the connectedness assumption allows us to deduce that 
$$D \in \mathbb{Z}\sum_{v \in V_{X}}v. $$
This shows that (\ref{ses2}) is a short exact sequence.  The fact that $|{\rm Pic}^{0}(X)| = \kappa(X)$ follows from a classical theorem of Kirchhoff in graph theory.  See any book on graph theory such as \cite{CP-book}.
\end{proof}

%\subsection{Groups acting on graphs}
\subsection{Groups acting on graphs} \label{group}
The group of automorphisms of a graph $Y$ will be denoted as usual by ${\rm Aut}(Y)$.  Throughout this paper, we will often have groups acting on graphs.  The precise definition is as follows.
\begin{definition}
Let $G$ be a group and $Y$ a graph.  We say that $G$ acts on $Y$ if we are given a group morphism $G \rightarrow {\rm Aut}(Y)$.  The group $G$ is said to act without inversion if for all $e \in \mathbf{E}_{Y}$ and all $\sigma \in G$, one has $\sigma \cdot e \neq \bar{e}$.
\end{definition}
If $G$ acts without inversion on a graph $Y$, then one obtains a graph $G \backslash Y$ by letting the vertices to be $G \backslash V_{Y}$ and the directed edges to be $G \backslash \mathbf{E}_{Y}$.  The incidence map is given by $o(G \cdot e) = G \cdot o(e)$, $t(G \cdot e) = G \cdot t(e)$, and the inversion map by $\overline{G \cdot e} =  G \cdot \bar{e}$.  We leave it to the reader to check that $G \backslash Y$ is a graph, and that the natural map $Y \rightarrow G \backslash Y$ given for $w \in V_{Y}$ and $e \in \mathbf{E}_{Y}$ by
$$w \mapsto G \cdot w \text{ and } e \mapsto G \cdot e  $$
is a morphism of graphs.

%\subsection{The Picard group of degree zero as a \texorpdfstring{$\mathbb{Z}[G]$}{}-module}
\subsection{The Picard group of degree zero as a \texorpdfstring{$\mathbb{Z}[G]$}{}-module} \label{equivariant}

Let $Y$ be a graph and assume that a group $G$ acts on $Y$ without inversion.  Then, ${\rm Div}(Y)$ is acted upon by $G$ as well, and the short exact sequence
$$0 \rightarrow {\rm Div}^{0}(Y) \rightarrow {\rm Div}(Y) \stackrel{s}{\rightarrow} \mathbb{Z} \rightarrow 0$$
becomes one of $\mathbb{Z}[G]$-modules, where $G$ acts trivially on $\mathbb{Z}$.  Note that every $\sigma \in G$ induces a bijection
\begin{equation} \label{bij}
\mathbf{E}_{Y,w} \rightarrow \mathbf{E}_{Y,\sigma(w)} 
\end{equation}
for all $w \in V_{Y}$, so that if $Y$ is locally finite, one has
\begin{equation} \label{preserve_valency}
{\rm val}_{Y}(\sigma \cdot w) = {\rm val}_{Y}(w),
\end{equation}
for all $w \in V_{Y}$ and all $\sigma \in G$.

\begin{proposition}
Assuming that $Y$ is locally finite, the degree, the adjacency, and the Laplacian operators on ${\rm Div}(Y)$ are all morphisms of $\mathbb{Z}[G]$-modules.
\end{proposition}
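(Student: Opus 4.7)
The plan is to verify directly that each of the three operators commutes with the action of every $\sigma \in G$ on ${\rm Div}(Y)$. Each operator is already $\mathbb{Z}$-linear on the free abelian group $\mathrm{Div}(Y)$ by construction, so equivariance under the $G$-action is the only thing left to check, and it suffices to evaluate both sides on a vertex basis element $w \in V_{Y}$.

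First I would handle the degree operator. One has $\sigma \cdot \mathcal{D}_{Y}(w) = {\rm val}_{Y}(w)\, \sigma(w)$, while $\mathcal{D}_{Y}(\sigma \cdot w) = {\rm val}_{Y}(\sigma \cdot w)\, \sigma(w)$. Equality is then immediate from the valency preservation statement (\ref{preserve_valency}). Next, for the adjacency operator, I would apply $\sigma$ to $\mathcal{A}_{Y}(w) = \sum_{e \in \mathbf{E}_{Y,w}} t(e)$ and use that $\sigma$, coming from a graph automorphism, satisfies $\sigma \cdot t(e) = t(\sigma \cdot e)$ by Definition \ref{morphism}(2). Reindexing the resulting sum along the bijection (\ref{bij}), $\mathbf{E}_{Y,w} \to \mathbf{E}_{Y,\sigma \cdot w}$, $e \mapsto \sigma \cdot e$, converts it into $\sum_{e' \in \mathbf{E}_{Y,\sigma \cdot w}} t(e') = \mathcal{A}_{Y}(\sigma \cdot w)$, as desired.

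Finally, since $\mathcal{L}_{Y} = \mathcal{D}_{Y} - \mathcal{A}_{Y}$, and the class of $\mathbb{Z}[G]$-module morphisms is closed under $\mathbb{Z}$-linear combinations, the Laplacian is then automatically $G$-equivariant; no separate argument is required.

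There is really no substantive obstacle here. The whole statement is a routine unwinding of definitions; the only care point is making sure the reindexing bijection and the fact that $\sigma$ preserves origin, terminus, and valency are lined up correctly, but all of these facts are already recorded in the excerpt, so the proof reduces to invoking (\ref{preserve_valency}) and (\ref{bij}).
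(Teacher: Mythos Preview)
Your proposal is correct and follows essentially the same approach as the paper's proof: both check $G$-equivariance on vertex basis elements, invoking (\ref{preserve_valency}) for $\mathcal{D}_{Y}$, the bijection (\ref{bij}) for $\mathcal{A}_{Y}$, and then deduce the claim for $\mathcal{L}_{Y} = \mathcal{D}_{Y} - \mathcal{A}_{Y}$.
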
    
\begin{proof}
The degree operator $\mathcal{D}_{Y}$ is $G$-equivariant because of (\ref{preserve_valency}) above.  The adjacency operator is $G$-equivariant, since
$$\sum_{e \in \mathbf{E}_{Y,\sigma \cdot w}} t(e) = \sigma \cdot \sum_{e \in \mathbf{E}_{Y,w}}t(e) $$
by (\ref{bij}) above.  It then follows that the Laplacian operator $\mathcal{L}_{Y}$ is $G$-equivariant as well, since $\mathcal{L}_{Y} = \mathcal{D}_{Y} - \mathcal{A}_{Y}$.
\end{proof}
As a direct consequence, we obtain the following result.
\begin{corollary} \label{G_module}
Let $Y$ be a locally finite graph, and let $G$ be a group acting on $Y$ without inversion.  Then, both ${\rm Pic}(Y)$ and ${\rm Pic}^{0}(Y)$ are $\mathbb{Z}[G]$-modules.
\end{corollary}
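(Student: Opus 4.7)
The plan is to read the statement as a direct consequence of the preceding proposition: once the Laplacian is known to be $G$-equivariant, taking quotients immediately produces the $\mathbb{Z}[G]$-module structures on ${\rm Pic}(Y)$ and ${\rm Pic}^{0}(Y)$. I do not anticipate any genuine obstacle; the content lies entirely in the proposition, and the corollary is a formal two-line consequence.

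Concretely, I would first observe that the group morphism $G \rightarrow {\rm Aut}(Y)$ restricts to a $G$-action on $V_{Y}$, and by extending $\mathbb{Z}$-linearly this endows the free abelian group ${\rm Div}(Y)$ with the structure of a $\mathbb{Z}[G]$-module. Since the degree map $s : {\rm Div}(Y) \rightarrow \mathbb{Z}$ sends each vertex to $1$, it is $G$-equivariant when $\mathbb{Z}$ carries the trivial action, so the short exact sequence (\ref{ses_basic}) is one of $\mathbb{Z}[G]$-modules and ${\rm Div}^{0}(Y)$ inherits a $\mathbb{Z}[G]$-submodule structure inside ${\rm Div}(Y)$.

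Next, invoking the proposition just proved, the Laplacian $\mathcal{L}_{Y} : {\rm Div}(Y) \rightarrow {\rm Div}(Y)$ is a morphism of $\mathbb{Z}[G]$-modules. Consequently its image ${\rm Pr}(Y) = {\rm Im}(\mathcal{L}_{Y})$ is a $\mathbb{Z}[G]$-submodule of ${\rm Div}(Y)$, and since it was already noted in \cref{picard1} that ${\rm Pr}(Y) \subseteq {\rm Div}^{0}(Y)$, it is simultaneously a $\mathbb{Z}[G]$-submodule of ${\rm Div}^{0}(Y)$. Taking quotients by a $\mathbb{Z}[G]$-submodule yields a $\mathbb{Z}[G]$-module structure in the standard way, so
\[
{\rm Pic}(Y) = {\rm Div}(Y)/{\rm Pr}(Y) \quad \text{and} \quad {\rm Pic}^{0}(Y) = {\rm Div}^{0}(Y)/{\rm Pr}(Y)
\]
acquire the required $\mathbb{Z}[G]$-module structures, completing the proof. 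The only thing one might pause to verify is that the two quotients are compatible, in the sense that the induced inclusion ${\rm Pic}^{0}(Y) \hookrightarrow {\rm Pic}(Y)$ is itself a map of $\mathbb{Z}[G]$-modules, but this is automatic from the functoriality of forming quotients by $G$-stable subgroups.
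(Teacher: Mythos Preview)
Your proposal is correct and follows exactly the approach the paper intends: the corollary is stated there without proof, as a direct consequence of the preceding proposition, and you have simply written out the obvious details (that ${\rm Div}(Y)$ and ${\rm Div}^{0}(Y)$ are $\mathbb{Z}[G]$-modules, that ${\rm Pr}(Y) = {\rm Im}(\mathcal{L}_{Y})$ is a $G$-stable submodule by the proposition, and that quotients by $G$-stable submodules inherit the module structure).
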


%\subsection{Tensoring with $\mathbb{Z}_{p}$}
\subsection{Tensoring with \texorpdfstring{$\mathbb{Z}_{p}$}{} over \texorpdfstring{$\mathbb{Z}$}{}} \label{tensoring_z_p}
Later on, we will be working over $\mathbb{Z}_{p}$, rather than over $\mathbb{Z}$, where $p$ is a fixed rational prime.  Since $\mathbb{Z}_{p}$ is flat over $\mathbb{Z}$, everything we did so far works equally well over $\mathbb{Z}_{p}$.  If $M$ is a $\mathbb{Z}$-module, then we let
$$M_{p} = \mathbb{Z}_{p} \otimes_{\mathbb{Z}}M. $$
For instance, if $X$ is a graph, then we have ${\rm Div}_{p}(X)$, ${\rm Div}_{p}^{0}(X), {\rm Pr}_{p}(X), {\rm Pic}_{p}(X)$, and ${\rm Pic}_{p}^{0}(X)$ whenever these groups are defined.  After tensoring the short exact sequence (\ref{ses_basic}) with $\mathbb{Z}_{p}$ over $\mathbb{Z}$, one gets
\begin{equation}  \label{ses_basic_p}
0 \longrightarrow {\rm Div}_{p}^{0}(X) \longrightarrow {\rm Div}_{p}(X) \stackrel{s_{p}}{\longrightarrow} \mathbb{Z}_{p} \longrightarrow 0. 
\end{equation}
If $X$ is locally finite, then the two short exact sequences
$$0 \rightarrow {\rm Pr}(X) \rightarrow {\rm Div}(X) \rightarrow {\rm Pic}(X) \rightarrow 0, $$
and
$$0 \rightarrow {\rm Pr}(X) \rightarrow {\rm Div}^{0}(X) \rightarrow {\rm Pic}^{0}(X) \rightarrow 0 $$
becomes after tensoring with $\mathbb{Z}_{p}$ over $\mathbb{Z}$ the short exact sequences
$$0 \rightarrow {\rm Pr}_{p}(X) \rightarrow {\rm Div}_{p}(X) \rightarrow {\rm Pic}_{p}(X) \rightarrow 0, $$
and
$$0 \rightarrow {\rm Pr}_{p}(X) \rightarrow {\rm Div}_{p}^{0}(X) \rightarrow {\rm Pic}_{p}^{0}(X) \rightarrow 0 $$
of $\mathbb{Z}_{p}$-modules.  If furthermore $X$ is finite and connected, then the short exact sequence (\ref{ses2}) becomes
$$0 \longrightarrow \mathbb{Z}_{p}\sum_{v \in V_{X}}v \longrightarrow {\rm Div}_{p}(X) \stackrel{\mathcal{L}_{p}}{\longrightarrow} {\rm Pr}_{p}(X) \longrightarrow 0. $$
In this situation, note that we have
$${\rm Pic}_{p}^{0}(X) \simeq {\rm Div}_{p}^{0}(X)/{\rm Pr}_{p}(X) \simeq {\rm Pic}^{0}(X)[p^{\infty}], $$
where ${\rm Pic}^{0}(X)[p^{\infty}]$ denotes the Sylow $p$-subgroup of the finite abelian group ${\rm Pic}^{0}(X)$.  In particular, we have
$$|{\rm Pic}_{p}^{0}(X)| = \kappa_{p}(X), $$
where $\kappa_{p}(X) = p^{{\rm ord}_{p}(\kappa(X))}$.  In other words, tensoring with $\mathbb{Z}_{p}$ over $\mathbb{Z}$ allows us to focus on the $p$-part of the invariant $\kappa(X)$.  If moreover the graph $X$ is acted upon by a group $G$, then all these $\mathbb{Z}_{p}$-modules become modules over the group ring $\mathbb{Z}_{p}[G]$.

%\subsection{The Iwasawa algebra and Iwasawa modules}
\subsection{The Iwasawa algebra and Iwasawa modules} \label{Iwasawa_algebra}
In this section, we gather some well-known results on the Iwasawa algebra $\Lambda$ of $\mathbb{Z}_{p}$ and finitely generated modules over $\Lambda$ that we will use throughout.  Our main references for this section are \cite{Bourbaki}, \cite{1}, and \cite{Neukirch:2008}.

We prefer to work with a multiplicative notation, so we let $\Gamma$ be a multiplicative group that is topologically isomorphic to $\mathbb{Z}_{p}$.  We fix once and for all a topological generator $\gamma$ for $\Gamma$.  For each integer $n \ge 0$, we set $\Gamma_{n} = \Gamma/\Gamma^{p^{n}}$ and we let also $\gamma_{n} = \gamma^{p^{n}}$.  The Iwasawa algebra $\Lambda$ is the profinite completion of the group ring $R = \mathbb{Z}_{p}[\Gamma]$, in other words
$$\Lambda = \mathbb{Z}_{p}\llbracket \Gamma \rrbracket = \varprojlim_{n \ge 0}\mathbb{Z}_{p}[\Gamma_{n} ], $$
where the compatible maps $\mathbb{Z}_{p}[\Gamma_{n+1}] \twoheadrightarrow \mathbb{Z}_{p}[\Gamma_{n}]$ are the natural projection maps.  It is a unique factorization domain that is a noetherian ring of Krull dimension two.  Moreover it is a local ring with unique maximal ideal given by $\mathfrak{m} = (p,\gamma-1)$.  The ring $\Lambda$ is a topological ring when endowed with the $\mathfrak{m}$-adic topology which is compact, and hence also complete.

The projection maps $\phi_{n}:\mathbb{Z}_{p}[\Gamma] \twoheadrightarrow \mathbb{Z}_{p}[\Gamma_{n}]$ induce a unital ring morphism $\mathbb{Z}_{p}[\Gamma] \rightarrow \Lambda$ which is injective, since $\bigcap_{n \ge 0}{\rm ker}(\phi_{n}) = 0$.  Thus, we have a natural embedding of unital commutative rings $R \hookrightarrow \Lambda$.  Throughout, we let 
$$\omega_{n} = \gamma_{n} - 1 = \gamma^{p^{n}} - 1 \in \mathfrak{m}^{n} \subseteq R \subseteq \Lambda. $$
The natural projection maps $\Lambda \twoheadrightarrow \mathbb{Z}_{p}[\Gamma_{n}]$ and $R \twoheadrightarrow \mathbb{Z}_{p}[\Gamma_{n}]$ induce isomorphisms 
\begin{equation} \label{proj}
\Lambda/\omega_{n}\Lambda \stackrel{\simeq}{\longrightarrow} \mathbb{Z}_{p}[\Gamma_{n}]  \text{ and } R/R \cap \omega_{n}\Lambda \stackrel{\simeq}{\longrightarrow} \mathbb{Z}_{p}[\Gamma_{n}] 
\end{equation}
of unital commutative rings.  In addition to the elements $\omega_{n}$, we will need the elements
$$\omega_{n,k} = \frac{\omega_{n}}{\omega_{k}} = 1 + \gamma_{k} + \gamma_{k}^{2} + \ldots + \gamma_{k}^{p^{n-k}-1} \in R, $$
whenever $k,n \in \mathbb{Z}_{\ge 0}$ are such that $k \le n$.  

Any $\Lambda$-module $M$ is in fact a topological $\Lambda$-module by taking for a basis of neighborhoods around $m \in M$, the sets $m + \mathfrak{m}^{n}\cdot M$, where $n \ge 0$.  With this topology, any morphism of $\Lambda$-modules is automatically continuous.  A finitely generated module over the Iwasawa algebra $\Lambda$ is called an Iwasawa module, and any such module is compact.  We can view $\mathbb{Z}_{p}$ as a $\Lambda$-module or an $R$-module with trivial action.  As such, we have in particular
\begin{equation} \label{trivial_module}
\mathbb{Z}_{p} \simeq \Lambda/\omega_{0}\Lambda \text{ and } \mathbb{Z}_{p} \simeq R/R \cap \omega_{0}\Lambda  .
\end{equation}  
A morphism of Iwasawa modules $f: M \rightarrow N$ is called a pseudo-isomorphism if it has finite kernel and cokernel.  This defines an equivalence relation on the collection of torsion Iwasawa modules, and we will write $M \sim N$ if $M$ and $N$ are pseudo-isomorphic torsion Iwasawa modules.  To every torsion Iwasawa module $M$ is associated its Iwasawa invariants $\mu(M)$ and $\lambda(M)$ which are non-negative integers.  Two pseudo-isomorphic torsion Iwasawa modules have the same Iwasawa invariants.  
\begin{theorem} \label{washington}
Let $M$ be an Iwasawa module and assume that there exists $k \in \mathbb{Z}_{\ge 0}$ such that $M/\omega_{n,k}M$ is finite for all $n \ge k$.  Then $M$ is a torsion Iwasawa module.  Moreover, there exist $n_{0}\in \mathbb{Z}_{\ge 0}$ and $\nu \in \mathbb{Z}$ such that
$${\rm ord}_{p}(|M/\omega_{n,k}M|) = \mu(M) p^{n} + \lambda(M) n + \nu, $$
when $n \ge n_{0}$.
\end{theorem}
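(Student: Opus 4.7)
The plan is to adapt the proof of the classical Iwasawa asymptotic formula (in which $\omega_n$ plays the role of the annihilator) to the family $\omega_{n,k}$, working with the identification $\Lambda \cong \mathbb{Z}_p\llbracket T \rrbracket$ given by $\gamma \mapsto 1+T$. Under this identification
$$\omega_{n,k} = \frac{(1+T)^{p^n}-1}{(1+T)^{p^k}-1} \equiv T^{p^n-p^k} \pmod{p},$$
so by Weierstrass preparation $\omega_{n,k}$ is, up to a unit, a distinguished polynomial of degree $p^n - p^k$, and $\Lambda/\omega_{n,k}\Lambda$ is free of rank $p^n - p^k$ over $\mathbb{Z}_p$. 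Invoking the structure theorem, I fix a pseudo-isomorphism $\phi \colon M \to E$ onto an elementary module
$$E = \Lambda^r \oplus \bigoplus_{i=1}^{s} \Lambda/(p^{a_i}) \oplus \bigoplus_{j=1}^{t} \Lambda/(f_j^{b_j}),$$
with each $f_j$ distinguished and irreducible.

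First I would show that $r=0$: splitting $\phi$ into two short exact sequences with finite kernel and cokernel and reducing modulo $\omega_{n,k}$ transfers the finiteness of $M/\omega_{n,k}M$ to that of $E/\omega_{n,k}E$; but $E/\omega_{n,k}E$ has $\Lambda^r/\omega_{n,k}\Lambda^r$ as a direct summand, which is $\mathbb{Z}_p$-free of rank $r(p^n - p^k)$, forcing $r = 0$. The same reasoning rules out any $f_j$ dividing some $\omega_{n,k}$. Next, I compute $|E/\omega_{n,k}E|$ on each elementary summand. For $\Lambda/(p^a)$ one has $\Lambda/(p^a, \omega_{n,k}) \cong (\mathbb{Z}/p^a\mathbb{Z})[T]/(T^{p^n - p^k})$, of order $p^{a(p^n - p^k)}$, contributing $\mu(E)\, p^n - \mu(E)\, p^k$ to the overall valuation. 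For $\Lambda/(f^b)$ with $f$ of degree $d$, multiplication by $\omega_{n,k}$ is an injective $\mathbb{Z}_p$-endomorphism of the rank-$bd$ free $\mathbb{Z}_p$-module $\Lambda/(f^b)$, whose determinant is $\prod_\rho \omega_{n,k}(\rho)$ over the $bd$ roots of $f^b$ in $\overline{\mathbb{Q}_p}$. The classical $p$-adic estimate $v_p((1+\rho)^{p^n}-1) = n + v_p(\log(1+\rho))$ valid for $n$ large yields $v_p(\omega_{n,k}(\rho)) = n + c_\rho$ for some constant $c_\rho$, so that $\operatorname{ord}_p|\Lambda/(f^b, \omega_{n,k})| = bd\, n + c_{f,b}$ for $n \gg 0$. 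Summing over all summands,
$$\operatorname{ord}_p|E/\omega_{n,k}E| = \mu(E)\, p^n + \lambda(E)\, n + \nu_E$$
for $n \gg 0$.

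To transfer the formula back to $M$, the same short exact sequences imply that $|M/\omega_{n,k}M|$ and $|E/\omega_{n,k}E|$ differ by a multiplicative constant bounded uniformly in $n$, since the snake-lemma error terms involve only the finite modules $\ker\phi$ and $\operatorname{coker}\phi$. Absorbing this into the constant and using the equalities $\mu(M) = \mu(E)$ and $\lambda(M) = \lambda(E)$ preserved under pseudo-isomorphism, the stated asymptotic follows. The main obstacle is the determinant computation on $\Lambda/(f^b)$: one must verify that the $p$-adic logarithm estimate for $\omega_{n,k}(\rho)$ yields precisely the coefficient $bd = \lambda(\Lambda/(f^b))$ on $n$, with the shift from $\omega_n$ to $\omega_{n,k}$ contributing only to the constant $\nu$ and not perturbing either the $p^n$ or the $n$ coefficients.
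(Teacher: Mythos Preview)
The paper does not supply its own proof of this statement: it simply cites Washington's textbook (Theorem~13.19 and Lemma~13.21). Your sketch is essentially the standard argument found there---structure theorem, direct computation on elementary summands, transfer via pseudo-isomorphism---so there is no meaningful difference in approach to discuss.

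That said, your final transfer step has a genuine gap. You assert that $|M/\omega_{n,k}M|$ and $|E/\omega_{n,k}E|$ ``differ by a multiplicative constant bounded uniformly in $n$'' and then ``absorb this into the constant.'' A bounded ratio is not enough: it would only give ${\rm ord}_p|M/\omega_{n,k}M| = \mu p^n + \lambda n + O(1)$, not a fixed $\nu$. What you actually need is that the ratio is \emph{eventually constant}. Running the snake lemma on the two short exact sequences coming from $\ker\phi$ and $\operatorname{coker}\phi$, one finds (for $n$ large enough that $\omega_{n,k}\in\mathfrak{m}^N$ kills the finite kernel and cokernel, and using that $E[\omega_{n,k}]=0$ since each $f_j$ is coprime to $\omega_{n,k}$) that $|M/\omega_{n,k}M| = |M[\omega_{n,k}]|\cdot|E/\omega_{n,k}E|$. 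The submodules $M[\omega_{n,k}]$ form an ascending chain in the Noetherian module $M$, hence stabilize; this is the missing ingredient.

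A smaller point: your logarithm estimate $v_p((1+\rho)^{p^n}-1)=n+v_p(\log(1+\rho))$ presupposes $\log(1+\rho)\neq 0$, i.e.\ that $1+\rho$ is not a $p$-power root of unity. This fails precisely when $f_j\mid\omega_m$ for some $m\le k$, a case your hypothesis does \emph{not} exclude (only $f_j\mid\omega_{n,k}$ is excluded). In that case $(1+\rho)^{p^k}=1$, and the quotient formula is $0/0$; but evaluating $\omega_{n,k}=1+\gamma_k+\cdots+\gamma_k^{p^{n-k}-1}$ directly at $\rho$ gives $\omega_{n,k}(\rho)=p^{n-k}$, so $v_p(\omega_{n,k}(\rho))=n-k$ is still affine in $n$ and your conclusion survives.
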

\begin{proof}
See \cite[Theorem 13.19]{1} and \cite[Lemma 13.21]{1}.
\end{proof}
If $M$ is a torsion Iwasawa module, then we shall denote its characteristic ideal by ${\rm char}_{\Lambda}(M)$.  The characteristic ideal is also an invariant that depends only on the pseudo-isomorphism class of the torsion Iwasawa module.  If 
$$0 \rightarrow M_{1} \rightarrow M_{2} \rightarrow M_{3} \rightarrow 0 $$
is a short exact sequence of torsion Iwasawa modules, then one has
\begin{equation} \label{ideal_mult}
{\rm char}_{\Lambda}(M_{2}) = {\rm char}_{\Lambda}(M_{1}) \cdot {\rm char}_{\Lambda}(M_{3}).
\end{equation}
The characteristic ideal can sometimes be calculated as follows.
\begin{theorem} \label{char_ideal_calc}
Let $F$ be a free $\Lambda$-module of finite rank and let $M$ be a finitely generated torsion Iwasawa module.  If we have a short exact sequence
$$0 \rightarrow F \stackrel{f}{\longrightarrow} F \longrightarrow M \longrightarrow 0 $$
of $\Lambda$-modules, then
$${\rm char}_{\Lambda}(M) = ({\rm det}(f)). $$
\end{theorem}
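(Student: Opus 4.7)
The plan is to compute both sides prime-by-prime, using that the Iwasawa algebra $\Lambda$ is a $2$-dimensional regular local (hence normal) Noetherian UFD, so its localization at every height-one prime is a DVR, and the characteristic ideal is defined as the product
\[
{\rm char}_{\Lambda}(M) = \prod_{\mathfrak{p}} \mathfrak{p}^{\,\ell_{\Lambda_{\mathfrak{p}}}(M_{\mathfrak{p}})},
\]
where $\mathfrak{p}$ runs through the height-one primes of $\Lambda$ and $\ell_{\Lambda_{\mathfrak{p}}}$ denotes length. It therefore suffices to show that for every such $\mathfrak{p}$, the $\mathfrak{p}$-adic valuation of $\det(f)$ equals $\ell_{\Lambda_{\mathfrak{p}}}(M_{\mathfrak{p}})$.

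First I would fix a $\Lambda$-basis of $F$ and represent $f$ by a square matrix $A \in \mathrm{M}_{r}(\Lambda)$ where $r = \mathrm{rank}_{\Lambda}(F)$; this makes $\det(f) = \det(A)$ well-defined up to a unit of $\Lambda$, so the ideal $(\det(f))$ depends only on $f$. Because $M$ is $\Lambda$-torsion and $F$ has rank $r$, extending scalars to the field of fractions $\mathrm{Frac}(\Lambda)$ gives an isomorphism $F \otimes \mathrm{Frac}(\Lambda) \xrightarrow{f} F \otimes \mathrm{Frac}(\Lambda)$, whence $\det(f)$ is a nonzero element of $\Lambda$.

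Next I would localize the short exact sequence at a height-one prime $\mathfrak{p}$ of $\Lambda$. Since $\Lambda$ is regular of dimension $2$, $\Lambda_{\mathfrak{p}}$ is a regular local ring of dimension $1$, i.e.\ a DVR with uniformizer $\pi$ and valuation $v_{\mathfrak{p}}$. Localization is exact, so
\[
0 \longrightarrow \Lambda_{\mathfrak{p}}^{r} \xrightarrow{\,A\,} \Lambda_{\mathfrak{p}}^{r} \longrightarrow M_{\mathfrak{p}} \longrightarrow 0
\]
is exact. Over the PID $\Lambda_{\mathfrak{p}}$, the Smith Normal Form supplies invertible matrices $U,V \in \mathrm{GL}_{r}(\Lambda_{\mathfrak{p}})$ with $UAV = \mathrm{diag}(\pi^{a_{1}}, \ldots, \pi^{a_{r}})$ for non-negative integers $a_{i}$, hence
\[
M_{\mathfrak{p}} \;\cong\; \bigoplus_{i=1}^{r} \Lambda_{\mathfrak{p}}/(\pi^{a_{i}}),
\qquad
\ell_{\Lambda_{\mathfrak{p}}}(M_{\mathfrak{p}}) \;=\; \sum_{i=1}^{r} a_{i} \;=\; v_{\mathfrak{p}}(\det A) \;=\; v_{\mathfrak{p}}(\det(f)).
\]
Note $\ell_{\Lambda_{\mathfrak{p}}}(M_{\mathfrak{p}})$ is finite for every height-one $\mathfrak{p}$ (so the product defining ${\rm char}_{\Lambda}(M)$ really has only finitely many nontrivial factors: exactly the $\mathfrak{p}$ containing $\det(f)$), confirming consistency with \eqref{ideal_mult}.

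Finally, I would assemble: because $\Lambda$ is a UFD, every nonzero element is determined up to a unit by its valuations at the height-one primes, so
\[
(\det(f)) \;=\; \prod_{\mathfrak{p}} \mathfrak{p}^{\,v_{\mathfrak{p}}(\det(f))} \;=\; \prod_{\mathfrak{p}} \mathfrak{p}^{\,\ell_{\Lambda_{\mathfrak{p}}}(M_{\mathfrak{p}})} \;=\; {\rm char}_{\Lambda}(M),
\]
which is the desired equality. The only potential obstacle is verifying that the definition of ${\rm char}_{\Lambda}(M)$ used here (product of height-one primes weighted by local lengths) matches the definition used elsewhere in the paper via the structure theorem up to pseudo-isomorphism; this is the standard equivalence for Noetherian Krull domains and can be invoked, for instance, from \cite{Bourbaki} or \cite{1}. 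Modulo that bookkeeping, the argument is essentially just Smith Normal Form one prime at a time.
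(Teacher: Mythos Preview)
Your argument is correct. The paper does not give its own proof of this statement; it simply cites \cite[Chapitre VII, \S 4, Corollaire to Proposition 14]{Bourbaki}, and your localize-at-height-one-primes-plus-Smith-Normal-Form computation is exactly the standard justification underlying that reference.
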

\begin{proof}
See \cite[Chapitre VII, \S 4, Corollaire to Proposition 14]{Bourbaki}.
\end{proof}

There is a non-canonical isomorphism 
\begin{equation} \label{non_can}
\Lambda \stackrel{\simeq}{\longrightarrow} \mathbb{Z}_{p}\llbracket T \rrbracket
\end{equation}
given by $\gamma \mapsto 1 + T$.  Therefore, any $\Lambda$-module can be viewed as a $\mathbb{Z}_{p}\llbracket T \rrbracket$-module, and Iwasawa modules are in particular finitely generated $\mathbb{Z}_{p}\llbracket T \rrbracket$-modules.  Every Iwasawa module $M$ is pseudo-isomorphic to an Iwasawa module of the form
\begin{equation} \label{elem_module}
\mathbb{Z}_{p}\llbracket T \rrbracket^{r} \oplus \bigoplus_{i=1}^{s}\mathbb{Z}_{p}\llbracket T \rrbracket/(p^{m_{i}}) \oplus \bigoplus_{j=1}^{t}\mathbb{Z}_{p}\llbracket T \rrbracket/(f_{j}(T)^{n_{j}}), 
\end{equation}
where $r,s,t,m_{i},n_{j} \in \mathbb{Z}_{\ge 0}$ and the $f_{j}(T) \in \mathbb{Z}_{p}[T]$ are distinguished and irreducible polynomials.  Recall that a polynomial $f(T) \in \mathbb{Z}_{p}[T]$ is called distinguished if it is monic and $p$ divides every coefficient except the leading one.  For instance, the elements
$$\omega_{n}(T) = (1+T)^{p^{n}}-1 \in \mathbb{Z}[T] \subseteq \mathbb{Z}_{p}\llbracket T \rrbracket $$
obtained from the $\omega_{n}$ via the isomorphism (\ref{non_can}) are distinguished.  An Iwasawa module is torsion precisely when $r=0$ in (\ref{elem_module}) above.  If $M$ is a torsion Iwasawa module pseudo-isomorphic to an Iwasawa module of the form
$$\bigoplus_{i=1}^{s}\mathbb{Z}_{p}\llbracket T \rrbracket/(p^{m_{i}}) \oplus \bigoplus_{j=1}^{t}\mathbb{Z}_{p}\llbracket T \rrbracket/(f_{j}(T)^{n_{j}}), $$
then its characteristic ideal, as an ideal of $\mathbb{Z}_{p}\llbracket T \rrbracket$, is given by
$${\rm char}_{\mathbb{Z}_{p}\llbracket T \rrbracket}(M) = (f(T)), $$
where 
\begin{equation} \label{char_id}
f(T) = \prod_{i=1}^{s}p^{m_{i}} \cdot \prod_{j=1}^{t}f_{j}(T)^{n_{j}} \in \mathbb{Z}_{p}[T]. 
\end{equation}
The Iwasawa invariants of $M$ are given by
$$\mu(M) = \mu(f(T)) \text{ and } \lambda(M) = \lambda(f(T)), $$
where we recall that given any 
$$g(T) = a_{0} + a_{1}T + a_{2}T^{2} + \ldots \in \mathbb{Z}_{p}\llbracket T \rrbracket,$$ 
one sets
$$\mu(g(T))= {\rm min}\{{\rm ord}_{p}(a_{i}): i \ge 0 \},$$
and
$$\lambda(g(T)) = {\rm min}\{i \ge 0 : {\rm ord}_{p}(a_{i}) = \mu(g(T)) \}.$$
For the polynomial $f(T)$ of (\ref{char_id}) above, one has
$$\mu(f(T)) = \sum_{i=1}^{s}m_{i}, $$
and
$$\lambda(f(T)) = {\rm deg}(f(T)), $$
since the $f_{j}(T)$ are distinguished.

%\section{Branched covers of graphs}
\section{Branched covers of graphs} \label{branched_cov}
%\subsection{Branched covers}
\subsection{Branched covers}
The following definition is taken from \cite[page 69]{Sunada:2013}.
\begin{definition} \label{branched}
Let $X$ and $Y$ be graphs and let $p:Y \rightarrow X$ be a morphism of graphs.  Then $p$ is called a branched (or ramified) cover if the following two conditions are satisfied:
\begin{enumerate}
\item Both functions $p:V_{Y} \rightarrow V_{X}$ and $p:\mathbf{E}_{Y} \rightarrow \mathbf{E}_{X}$ are surjective,
\item For all $w \in V_{Y}$, the cardinality $m_{w}$, also denoted by $m_{p}(w)$, of $(p|_{\mathbf{E}_{Y,w}})^{-1}(e)$ is independent of $e \in \mathbf{E}_{X,p(w)}$.  The cardinality $m_{w}$ is called the ramification index of the vertex $w$.  If $m_{w}$ is a positive integer, then the function
$$p|_{\mathbf{E}_{Y,w}}:\mathbf{E}_{Y,w} \rightarrow \mathbf{E}_{X,p(w)} $$
is $m_{w}$-to-$1$.
\end{enumerate}
\end{definition}
If $p:Y \rightarrow X$ is a branched cover for which $m_{w}=1$ for all $w \in V_{Y}$, then we will refer to $p$ as an unramified cover or simply a cover.  In this case, the functions $p|_{\mathbf{E}_{Y,w}}$ are bijections for all $w \in V_{Y}$.  In the situation where both $X$ and $Y$ are locally finite and $m_{w}$ is finite, we have
\begin{equation}\label{valency_change}
{\rm val}_{Y}(w) = m_{w} \cdot {\rm val}_{X}(p(w)).
\end{equation}
If $X,Y,Z$ are graphs and $p:Z \rightarrow Y$, $q:Y \rightarrow X$ are both branched covers of graphs, then so is the composition $q \circ p:Z \rightarrow X$.  If $w$ is a vertex of $Z$, then one has 
$$m_{q \circ p}(w) = m_{p}(w) \cdot m_{q}(p(w)),$$
when all these quantities are finite.  As pointed out in \cite[page 69]{Sunada:2013}, a branched cover in the sense of Definition \ref{branched} is an example of a harmonic morphism as defined for instance in \cite{Urakawa:2000} and \cite{Baker/Norine:2009} under suitable assumptions on the graphs involved such as simplicity or looplessness.
\begin{remark} \label{conn}
If $p:Y \rightarrow X$ is a branched cover and $Y$ is connected, then so is $X$.  Indeed, if $v_{1}, v_{2} \in V_{X}$, then let $w_{1}, w_{2} \in V_{Y}$ be such that $p(w_{i})=v_{i}$ for $i=1,2$.  Since $Y$ is connected, there exists a path $c$ in $Y$ going from $w_{1}$ to $w_{2}$.  Then, the path $p(c)$ is a path in $X$ going from $v_{1}$ to $v_{2}$ showing the claim.
\end{remark}
Following \cite[page 69]{Sunada:2013}, given a branched cover $p:Y \rightarrow X$ and $v \in V_{X}$, let
$$d(v) = \sum_{w \in p^{-1}(v)} m_{w}, $$
provided this sum is finite.  Since we have commutative diagrams
\begin{equation*}
\begin{tikzcd}
\mathbf{E}_{Y,w}^{o} \arrow["p|_{\mathbf{E}_{Y,w}^{o}}",r] \arrow{d}[swap]{{\rm inv}}  &  \mathbf{E}_{X,p(w)}^{o} \arrow{d}{{\rm inv}}\\
\mathbf{E}_{Y,w}^{t} \arrow["p|_{\mathbf{E}_{Y,w}^{t}}",r] &  \mathbf{E}_{X,p(w)}^{t},
\end{tikzcd}
\end{equation*}
it follows that the maps
$$p|_{\mathbf{E}_{Y,w}^{t}}:\mathbf{E}_{Y,w}^{t} \rightarrow \mathbf{E}_{X,p(w)}^{t}$$
are also $m_{w}$-to-$1$.  Therefore, if $e \in \mathbf{E}_{X}$, then
$$d(o(e)) = |p^{-1}(e)| = d(t(e)), $$
and if $X$ is connected, $d(v)$ is independent of the vertex $v \in V_{X}$.  This nonnegative integer is called the degree of the branched cover $p$ and is denoted by $[Y:X]$.  One has
\begin{equation} \label{degree}
[Y:X] = \sum_{w \in p^{-1}(v)} m_{w},
\end{equation}
for all $v \in V_{X}$, when $X$ is assumed to be connected.

Whenever a group $G$ acts on a graph $Y$, one gets two $G$-sets, as both $V_{Y}$ and $\mathbf{E}_{Y}$ are acted upon by $G$ as well.  Note that if $G$ acts on a graph $Y$ in a way that $G$ acts freely on $V_{Y}$, then it necessarily acts freely on $\mathbf{E}_{Y}$ as well.  Indeed, if $\sigma \cdot e = e$ for some $e \in \mathbf{E}_{Y}$, then $o(e) = o(\sigma \cdot e) = \sigma \cdot o(e)$ from which we deduce that $\sigma$ is the neutral element of $G$ provided $G$ acts freely on $V_{Y}$.

\begin{proposition}
Let $Y$ be a graph and let $G$ be a group acting without inversion on $Y$.  
\begin{enumerate}
\item If $G$ acts freely on $\mathbf{E}_{Y}$, then the natural morphism of graphs $p:Y \rightarrow G \backslash Y$ is a branched cover.  Moreover, for all $w \in V_{Y}$, one has $m_{w} = |S_{w}|$, where $S_{w} = {\rm Stab}_{G}(w)$.
\item If $G$ acts freely on $V_{Y}$, then the natural morphism of graphs $p:Y \rightarrow G \backslash Y$ is an unramified cover.
\end{enumerate}
\end{proposition}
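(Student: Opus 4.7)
The plan is to prove $(1)$ first, since $(2)$ follows from $(1)$ together with the observation made in the paragraph just preceding the proposition that a free action on $V_Y$ implies a free action on $\mathbf{E}_Y$.

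For $(1)$, surjectivity of $p$ on both vertices and directed edges is immediate from the definition of $G \backslash Y$, so the real content is condition $(2)$ of \cref{branched}. Fix $w \in V_Y$ and $\bar{e} \in \mathbf{E}_{G \backslash Y, p(w)}$. Unwinding the definitions, I would first write
\[
(p|_{\mathbf{E}_{Y,w}})^{-1}(\bar{e}) = \{e' \in \mathbf{E}_Y : G \cdot e' = \bar{e} \text{ and } o(e') = w\}.
\]
Choose any $e_0 \in \mathbf{E}_Y$ with $G \cdot e_0 = \bar{e}$; then $G \cdot o(e_0) = o(\bar{e}) = G \cdot w$, so there is some $\tau \in G$ with $\tau \cdot o(e_0) = w$. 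Replacing $e_0$ by $\tau \cdot e_0$, I may assume from the outset that $o(e_0) = w$. The fiber then becomes
\[
(p|_{\mathbf{E}_{Y,w}})^{-1}(\bar{e}) = \{\sigma \cdot e_0 : \sigma \in G,\, \sigma \cdot w = w\} = \{\sigma \cdot e_0 : \sigma \in S_w\}.
\]

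The key step is now to identify the cardinality of this fiber. Since $G$ is assumed to act freely on $\mathbf{E}_Y$, the map $S_w \to \mathbf{E}_{Y,w}$ given by $\sigma \mapsto \sigma \cdot e_0$ is injective, so the fiber has cardinality exactly $|S_w|$. Crucially, $|S_w|$ depends only on $w$ and not on the chosen $\bar{e} \in \mathbf{E}_{G \backslash Y, p(w)}$. This verifies condition $(2)$ of \cref{branched} and simultaneously establishes the formula $m_w = |S_w|$.

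For $(2)$, the free action of $G$ on $V_Y$ forces the action on $\mathbf{E}_Y$ to be free as well, by the argument recalled just before the proposition: if $\sigma \cdot e = e$ then $\sigma \cdot o(e) = o(e)$ forces $\sigma$ to be trivial. Part $(1)$ then applies, and for every $w \in V_Y$ we have $S_w = \{1\}$, hence $m_w = 1$. The main (mild) obstacle in the whole argument is the bookkeeping in replacing a representative $e_0$ of the orbit $\bar{e}$ by one that starts at $w$; once this is done, the identification of the fiber with $S_w$ and the appeal to freeness on $\mathbf{E}_Y$ are straightforward.
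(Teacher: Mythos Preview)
Your proof is correct and follows essentially the same approach as the paper: pick a representative of the edge orbit, translate it so that its origin is $w$, identify the fiber with the $S_w$-orbit of this representative, and invoke freeness on $\mathbf{E}_Y$ to conclude that this orbit has cardinality $|S_w|$. The only cosmetic issue is your use of $\bar{e}$ for an edge of $G\backslash Y$, which clashes with the paper's notation for the inversion map; writing $G\cdot e$ (as the paper does) or choosing a different symbol would avoid confusion.
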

\begin{proof}
It is clear that $p$ is surjective both on vertices and directed edges.  For simplicity let $X = G \backslash Y$ and let $G \cdot e \in \mathbf{E}_{X,p(w)}$, where $w \in V_{Y}$ is an arbitrary vertex of $Y$.  Then $o(G \cdot e)= G \cdot w$, and there exists $\sigma \in G$ such that $o(\sigma \cdot e) = w$.  A simple calculation shows that
$$\left(p|_{\mathbf{E}_{Y,w}} \right)^{-1}(G \cdot e) = \{\tau \cdot \sigma \cdot e : \tau \in S_{w} \}. $$
Since $G$ acts freely on $\mathbf{E}_{Y}$, all elements $\tau \cdot \sigma \cdot e$ are distinct as $\tau$ runs over $S_{w}$.  This ends the proofs of both claims.
\end{proof}

%\subsection{The Picard group of degree zero and branched covers}
\subsection{The Picard group of degree zero and branched covers} \label{picard}
Suppose now that we have a branched cover of graphs $f:Y \rightarrow X$, then we obtain a natural surjective group morphism
$$f_{*}:{\rm Div}(Y) \rightarrow {\rm Div}(X) $$
given by $w \mapsto f_{*}(w) = f(w)$.  If moreover $m_{w}$ is finite for all $w \in V_{Y}$, then we have another natural group morphism 
$$f_{r}:{\rm Div}(Y) \rightarrow {\rm Div}(X)$$ 
given by $w \mapsto f_{r}(w) = m_{w}\cdot f(w)$.  
\begin{proposition} \label{compatible_pic}
Assuming that $X$ and $Y$ are locally finite, and with the notation as above, we have a commutative diagram
\begin{equation*}
\begin{tikzcd}
{\rm Div}(Y) \arrow["\mathcal{L}_{Y}",r] \arrow{d}[swap]{f_{r}}  &  {\rm Div}(Y) \arrow{d}{f_{*}}\\
{\rm Div}(X) \arrow["\mathcal{L}_{X}",r] &  {\rm Div}(X).
\end{tikzcd}
\end{equation*}
\end{proposition}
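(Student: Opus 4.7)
The plan is to verify the diagram vertex by vertex, since all four maps are $\mathbb{Z}$-linear and ${\rm Div}(Y)$ is free on $V_{Y}$. So it suffices to show that for an arbitrary $w \in V_{Y}$,
\[
f_{*}(\mathcal{L}_{Y}(w)) = \mathcal{L}_{X}(f_{r}(w)).
\]

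First I would unwind the left-hand side using the definition $\mathcal{L}_{Y} = \mathcal{D}_{Y} - \mathcal{A}_{Y}$:
\[
f_{*}(\mathcal{L}_{Y}(w)) = {\rm val}_{Y}(w) \cdot f(w) - \sum_{e \in \mathbf{E}_{Y,w}} f(t(e)).
\]
Using that $f$ is a morphism of graphs, $f(t(e)) = t(f(e))$, so the adjacency sum becomes $\sum_{e \in \mathbf{E}_{Y,w}} t(f(e))$. The main structural input is then the branched-cover condition: the restriction $f|_{\mathbf{E}_{Y,w}} \colon \mathbf{E}_{Y,w} \to \mathbf{E}_{X,f(w)}$ is $m_{w}$-to-$1$, so each directed edge $e' \in \mathbf{E}_{X,f(w)}$ contributes $m_{w}$ copies of $t(e')$ to the sum. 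This yields
\[
\sum_{e \in \mathbf{E}_{Y,w}} t(f(e)) = m_{w} \sum_{e' \in \mathbf{E}_{X,f(w)}} t(e') = m_{w} \cdot \mathcal{A}_{X}(f(w)).
\]

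Next I would handle the valency term using the identity $(\ref{valency_change})$, namely ${\rm val}_{Y}(w) = m_{w} \cdot {\rm val}_{X}(f(w))$, which gives
\[
{\rm val}_{Y}(w) \cdot f(w) = m_{w} \cdot {\rm val}_{X}(f(w)) \cdot f(w) = m_{w} \cdot \mathcal{D}_{X}(f(w)).
\]
Combining the two computations,
\[
f_{*}(\mathcal{L}_{Y}(w)) = m_{w}\bigl(\mathcal{D}_{X}(f(w)) - \mathcal{A}_{X}(f(w))\bigr) = m_{w} \cdot \mathcal{L}_{X}(f(w)) = \mathcal{L}_{X}(m_{w} \cdot f(w)) = \mathcal{L}_{X}(f_{r}(w)),
\]
which is precisely what we need.

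There is no real obstacle here; the proof is a direct computation. The only subtlety worth flagging is that the asymmetry between $f_{*}$ and $f_{r}$ (the latter weighted by the ramification index) is exactly compensated by the asymmetry in how $f$ distorts valencies versus how it distorts adjacency sums — both distortions scale by $m_{w}$ at a vertex $w$, which is the reason the square commutes. Implicit use of local finiteness of $X$ and $Y$ is needed throughout so that the sums and the integers ${\rm val}_{Y}(w)$, ${\rm val}_{X}(f(w))$, $m_{w}$ are all finite and meaningful.
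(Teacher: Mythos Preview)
Your proof is correct and follows essentially the same approach as the paper's: both arguments decompose $\mathcal{L} = \mathcal{D} - \mathcal{A}$, use the valency identity ${\rm val}_{Y}(w) = m_{w}\cdot{\rm val}_{X}(f(w))$ for the degree part, and use the $m_{w}$-to-$1$ property of $f|_{\mathbf{E}_{Y,w}}$ for the adjacency part. The only cosmetic difference is that the paper records this as the commutativity of two separate squares (one for $\mathcal{D}$, one for $\mathcal{A}$), whereas you carry out both verifications in a single vertex-level computation.
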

\begin{proof}
The commutativity of the diagram follows from the commutativity of the two diagrams
\begin{equation*}
\begin{tikzcd}
{\rm Div}(Y) \arrow["\mathcal{D}_{Y}",r] \arrow{d}[swap]{f_{r}}  &  {\rm Div}(Y) \arrow{d}{f_{*}}\\
{\rm Div}(X) \arrow["\mathcal{D}_{X}",r] &  {\rm Div}(X),
\end{tikzcd}
\text{ and }
\begin{tikzcd}
{\rm Div}(Y) \arrow["\mathcal{A}_{Y}",r] \arrow{d}[swap]{f_{r}}  &  {\rm Div}(Y) \arrow{d}{f_{*}}\\
{\rm Div}(X) \arrow["\mathcal{A}_{X}",r] &  {\rm Div}(X).
\end{tikzcd}
\end{equation*}
The first diagram commutes because of (\ref{valency_change}), and the second diagram commutes since $f|_{\mathbf{E}_{Y,w}}:\mathbf{E}_{Y,w} \rightarrow \mathbf{E}_{X,f(w)}$ is $m_{w}$-to-$1$.
\end{proof}
Since $f_{*}:{\rm Div}(Y) \rightarrow {\rm Div}(X)$ is surjective, it follows from Proposition \ref{compatible_pic} that we have two surjective group morphisms
\begin{equation} \label{induced_map}
f_{*}:{\rm Pic}(Y) \rightarrow {\rm Pic}(X) \text{ and } f_{*}:{\rm Pic}^{0}(Y) \rightarrow {\rm Pic}^{0}(X).
\end{equation}
The surjectivity of the second group morphism implies the divisibility $\kappa(X) \, | \, \kappa(Y)$ whenever $f:Y \rightarrow X$ is a branched cover of finite connected graphs.

\begin{remark}
The fact that $\kappa(X) \, | \, \kappa(Y)$ is known to hold true more generally for harmonic morphisms (see \cite[Section 4]{Baker/Norine:2009}).  
\end{remark}

%\section{Constructions of branched covers via voltage assignments}
\section{Constructions of branched covers via voltage assignments} \label{voltage}

%\subsection{The basic construction}
\subsection{The basic construction} \label{basic}
Let $X$ be a graph, $G$ a group (for which we use the multiplicative notation), and $\alpha:\mathbf{E}_{X} \rightarrow G$ a function satisfying
\begin{equation} \label{voltage_eq}
\alpha(\bar{e}) = \alpha(e)^{-1}.
\end{equation}
Such a function $\alpha:\mathbf{E}_{X} \rightarrow G$ satisfying (\ref{voltage_eq}) above is often called a voltage assignment on $X$ with values in the group $G$.  Note that if $S$ is an orientation for $X$, then it suffices to specify $\alpha$ on $S$ and set $\alpha(\bar{s}) = \alpha(s)^{-1}$ in order to get a function $\alpha:\mathbf{E}_{X} \rightarrow G$ satisfying (\ref{voltage_eq}) above.  Let also
$$\mathcal{I} = \{(v,I_{v}) \, | \, v \in V_{X} \text{ and } I_{v} \leq G \}, $$
be a collection of subgroups of $G$ indexed by the vertices of $X$.  We define a graph $X(G,\mathcal{I},\alpha)$ as follows.  The vertex set is the disjoint union 
$$V = \bigsqcup_{v \in V_{X}} \{ v\} \times G/I_{v}, $$
and the collection of directed edges is given by
$$\mathbf{E} = \mathbf{E}_{X} \times G. $$
The directed edge $(e,\sigma)$ connects the vertex $(o(e),\sigma I_{o(e)})$ to the vertex $(t(e),\sigma \alpha(e) I_{t(e)})$.  Furthermore, one lets
$$\overline{(e,\sigma)} = (\bar{e},\sigma \alpha(e)). $$
We leave it to the reader to check that $X(G,\mathcal{I},\alpha)$ is a graph.    

The group $G$ acts naturally on the graph $X(G,\mathcal{I},\alpha)$ without inversion.  Indeed, for simplicity, let $Y = X(G,\mathcal{I},\alpha).  $ If $\tau \in G$, let $\phi_{\tau}:Y \rightarrow Y$ be defined via
$$\phi_{\tau}(v,\sigma I_{v}) = (v,\tau \sigma I_{v}) \text{ and } \phi_{\tau}(e,\sigma) = (e,\tau \sigma). $$
We leave it to the reader to check that $\phi_{\tau}$ is an automorphism of graphs.  It is then simple to check that the map $G \rightarrow {\rm Aut}(Y)$ defined via $\tau \mapsto \phi_{\tau}$ is a homomorphism of groups which gives an action of $G$ on $Y$ without inversion.  It clearly acts freely on $\mathbf{E}_{Y}$, but not necessarily freely on $V_{Y}$.  In fact, if $w = (v,\sigma I_{v}) \in V_{Y}$, then
$$S_{w} = {\rm Stab}_{G}(w) = \sigma I_{v} \sigma^{-1}. $$

We define the map 
$$p:X(G,\mathcal{I},\alpha) \rightarrow X$$ 
via $p(v,\sigma \cdot I_{v}) = v$ and $p(e,\sigma) = e$.  It is simple to verify that $p$ is a morphism of graphs.  In fact, it is a branched cover, since for $w = (v,\sigma I_{v}) \in V_{Y}$, given $e \in \mathbf{E}_{X,v}$, one has
$$\left(p|_{\mathbf{E}_{Y,w}} \right)^{-1}(e) = \{ (e, \rho \cdot \sigma ) \, : \, \rho \in S_{w} = \sigma I_{v} \sigma^{-1}\}. $$
Thus the cardinality of $\left(p|_{\mathbf{E}_{Y,w}} \right)^{-1}(e)$ is $|I_{v}|$ and is independent of $e \in \mathbf{E}_{X,v}$.  Summarizing the previous discussion, we have the following proposition.
\begin{proposition} \label{explicit_cov}
Let $X$ be a graph and $\alpha:\mathbf{E}_{X} \rightarrow G$ a function satisfying (\ref{voltage_eq}).  Let also $\mathcal{I} = \{(v,I_{v}): I_{v} \le G \}$ be a collection of subgroups of $G$ indexed by $V_{X}$ and consider the graph $Y = X(G,\mathcal{I},\alpha)$.  Then, the natural morphism of graphs
$$p:Y \rightarrow X$$
is a branched cover of graphs.  The group $G$ acts naturally on $Y$ without inversion and freely on $\mathbf{E}_{Y}$.  Moreover, for each vertex $w \in V_{Y}$, the ramification index $m_{p}(w)$ depends only on $v = p(w) \in V_{X}$ and is equal to $|I_{v}|$.  If $G$ is finite, then (\ref{degree}) becomes
$$[Y:X] = r_{v} \cdot m_{v}, $$
where $[Y:X] = |G|$, $m_{v} = |I_{v}|$ and $r_{v} = |p^{-1}(v)|$ for all $v \in V_{X}$.
\end{proposition}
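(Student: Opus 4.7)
The plan is to unpack the definitions of the vertex set, edge set, incidence, and inversion for $Y = X(G,\mathcal{I},\alpha)$ and verify each assertion in turn, with the only substantive computation being a single description of the fibers of $p|_{\mathbf{E}_{Y,w}}$. I would first check that the map $p:Y \to X$ given by $(v,\sigma I_v) \mapsto v$ and $(e,\sigma) \mapsto e$ is a morphism of graphs in the sense of Definition \ref{morphism}: it respects origins and termini from the way incidence on $Y$ is defined, and it respects inversion because $\overline{(e,\sigma)} = (\bar e, \sigma \alpha(e))$ projects to $\bar e = \overline{p(e,\sigma)}$. Surjectivity on both vertices and directed edges is immediate from the definitions of $V_Y$ and $\mathbf{E}_Y$ as a disjoint union and a product.

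Next I would carry out the main fiber computation. Fix $w = (v,\sigma I_v) \in V_Y$ and $e \in \mathbf{E}_{X,v}$. A directed edge $(e',\rho) \in \mathbf{E}_Y$ satisfies $p(e',\rho) = e$ iff $e' = e$, and lies in $\mathbf{E}_{Y,w}$ iff its origin $(v,\rho I_v)$ equals $w$, i.e., iff $\rho \in \sigma I_v$. Thus
$$(p|_{\mathbf{E}_{Y,w}})^{-1}(e) = \{(e,\sigma\tau) : \tau \in I_v\},$$
which has cardinality $|I_v|$, plainly independent of $e$. This establishes that $p$ is a branched cover in the sense of Definition \ref{branched} and that $m_p(w) = |I_v|$ depends only on $v = p(w)$.

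For the $G$-action, I would verify that the assignment $\phi_\tau(v,\sigma I_v) = (v,\tau\sigma I_v)$ and $\phi_\tau(e,\sigma) = (e,\tau\sigma)$ respects incidence and inversion by direct substitution into their defining formulas, and that associativity of multiplication in $G$ makes $\tau \mapsto \phi_\tau$ a group homomorphism. Freeness on $\mathbf{E}_Y$ follows instantly from comparing the second coordinate, and the action is without inversion because $\phi_\tau(e,\sigma) = \overline{(e,\sigma)} = (\bar e,\sigma\alpha(e))$ would force $e = \bar e$, contradicting the graph axioms. Finally, when $G$ is finite, formula (\ref{degree}) together with the $e$-independence of $m_w$ gives $[Y:X] = r_v \cdot m_v$ at every $v \in V_X$, and since $p^{-1}(v) = \{v\} \times G/I_v$ has cardinality $|G|/|I_v|$, one obtains $r_v \cdot m_v = |G|$. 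The only real obstacle here is bookkeeping — keeping track of which coordinate of the product structure one is referring to at each moment — rather than any deeper argument.
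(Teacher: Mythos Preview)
Your proposal is correct and follows essentially the same approach as the paper, which proves this proposition by the discussion preceding it in \cref{basic}: verify that $p$ is a morphism, compute the fiber of $p|_{\mathbf{E}_{Y,w}}$ over an arbitrary $e \in \mathbf{E}_{X,v}$, and check the $G$-action properties directly. The only cosmetic difference is that the paper parametrizes the fiber as $\{(e,\rho\sigma) : \rho \in S_w = \sigma I_v \sigma^{-1}\}$ via the stabilizer, whereas you write it as $\{(e,\sigma\tau) : \tau \in I_v\}$; these are of course the same set.
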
 

%\subsection{Functoriality}
\subsection{Functoriality} \label{functoriality}
More generally, assume that $f:G_{1} \rightarrow G_{2}$ is a surjective group morphism and that $\alpha:\mathbf{E}_{X} \rightarrow G_{1}$ is a voltage assignment with values in $G_{1}$.  Composing with $f$ gives a voltage assignment $f \circ \alpha:\mathbf{E}_{X} \rightarrow G_{2}$ with values in $G_{2}$.  We let
$$f(\mathcal{I}) = \{(v,f(I_{v})): v \in V_{X} \},$$
so that $f(\mathcal{I})$ is a collection of subgroups of $G_{2}$ indexed by the vertices of $X$.  We obtain two graphs
$$X(G_{1},\mathcal{I},\alpha) \text{ and } X(G_{2},f(\mathcal{I}),f \circ \alpha), $$
and there is a natural map $f_{*}:X(G_{1},\mathcal{I},\alpha) \rightarrow X(G_{2},f(\mathcal{I}), f \circ \alpha)$ given by
$$f_{*}(v,\sigma_{1} I_{v}) = (v,f(\sigma_{1})f(I_{v})) \text{ and } f_{*}(e,\sigma_{1}) = (e,f(\sigma_{1})).$$
We leave it to the reader to check that $f_{*}:X(G_{1},\mathcal{I},\alpha) \rightarrow X(G_{2},f(\mathcal{I}), f \circ \alpha)$ is a morphism of graphs that is surjective on vertices and directed edges.  If fact it is a branched cover of graphs.  Indeed, for simplicity let as above $Z = X(G_{1},\mathcal{I},\alpha)$, but also $Y = X(G_{2},f(\mathcal{I}),f \circ \alpha)$.  Moreover, let $w = (v,\sigma_{1}I_{v})$ be a vertex of $Z$.  If $(e,\sigma_{2}) \in \mathbf{E}_{Y,f_{*}(w)}$, then $\sigma_{2} = f(\sigma_{1}\tau)$ for some $\tau \in I_{v}$.  A simple calculation shows that
\begin{equation} \label{ram_index_volt}
\left(f_{*}|_{\mathbf{E}_{Z,w}} \right)^{-1}((e,\sigma_{2})) = \{(e,\rho\sigma_{1}\tau) : \rho \in \sigma_{1} I_{v} \sigma_{1}^{-1} \cap {\rm ker}(f) \},
\end{equation}
so that the second condition of \cref{branched} is satisfied with 
$$m_{w} = |\sigma_{1} I_{v} \sigma_{1}^{-1} \cap {\rm ker}(f)|.$$
Note that if $f:G_{1} \rightarrow 1$ is the trivial group morphism, then we get back the situation of \cref{explicit_cov}.

%\subsection{Branched $\mathbb{Z}_{p}$-towers of graphs} 
\subsection{\texorpdfstring{Branched $\mathbb{Z}_{p}$}{}-towers of graphs} \label{towers}
We can now explain the $\mathbb{Z}_{p}$-towers of graphs that we will be studying in this paper.  As in \cref{Iwasawa_algebra}, we let $\Gamma$ be a multiplicative topological group isomorphic to $\mathbb{Z}_{p}$.  Let $X = (V_{X},\mathbf{E}_{X})$ be a finite connected graph and let $\alpha:\mathbf{E}_{X} \rightarrow \Gamma$ be a function satisfying (\ref{voltage_eq}).  Moreover, for each $v \in V_{X}$, choose a closed subgroup $I_{v}$ of $\Gamma$, and as in \cref{basic}, let $\mathcal{I} = \{(v,I_{v}): v \in V_{X} \}$.  We obtain a graph $X_{\infty} = X(\Gamma,\mathcal{I},\alpha)$ and a branched cover $X_{\infty} \rightarrow X$.  The graph $X_{\infty}$ is infinite.  In order to get finite graphs, consider for each integer $n \ge 1$ the natural surjective group morphism $\pi_{n}:\Gamma \twoheadrightarrow \Gamma_{n}$, where recall from \cref{Iwasawa_algebra} that we set $\Gamma_{n} = \Gamma/\Gamma^{p^{n}} \simeq \mathbb{Z}/p^{n}\mathbb{Z}$.  Let $\alpha_{n}:\mathbf{E}_{X} \rightarrow \Gamma_{n}$ be the composition $\pi_{n} \circ \alpha$.  We let also
$$\mathcal{I}_{n} = \{(v,\pi_{n}(I_{v})) : v \in V_{X} \} $$
which is a collection of subgroups of $\Gamma_{n}$ indexed by the vertices of $X$.  We obtain a family of finite graphs
$$X_{n} = X(\Gamma_{n},\mathcal{I}_{n},\alpha_{n}), $$
and the natural surjective group morphisms $\Gamma_{n+1} \twoheadrightarrow \Gamma_{n}$ induces by \cref{functoriality} branched covers $X_{n+1} \rightarrow X_{n}$ for each $n \ge 0$.  We thus obtain a tower of graphs
\begin{equation} \label{branched_tower}
X = X_{0} \leftarrow X_{1} \leftarrow X_{2} \leftarrow \ldots \leftarrow X_{n} \leftarrow \ldots,
\end{equation}
where each map $X_{n+1} \rightarrow X_{n}$ is a branched cover satisfying $[X_{n+1}:X_{n}] = p$.  We call such a tower a branched (or ramified) $\mathbb{Z}_{p}$-towers of finite graphs provided all $X_{n}$ are connected.  We explain a sufficient condition that guarantees the connectedness of all finite graphs $X_{n}$ in \cref{connectedness} below.  Each graph $X_{n}$ in the tower (\ref{branched_tower}) is acted upon by $\Gamma_{n}$ by \cref{G_module} and \cref{explicit_cov}.
\begin{proposition}\label{compatible}
For the purpose of this proposition, let $f$ denote the natural group morphism ${\rm Pic}^{0}(X_{n+1}) \twoheadrightarrow {\rm Pic}^{0}(X_{n})$ from (\ref{induced_map}) and let $p$ denote the natural projection map $\Gamma_{n+1} \twoheadrightarrow \Gamma_{n}$.  Then, for all $x \in {\rm Pic}^{0}(X_{n+1})$ and for all $\gamma \in \Gamma_{n+1}$, one has
$$p(\gamma) \cdot f(x) = f(\gamma \cdot x).$$
The same property holds true for the group morphism ${\rm Pic}(X_{n+1}) \twoheadrightarrow {\rm Pic}(X_{n})$.
\end{proposition}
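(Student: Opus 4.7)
The plan is to reduce everything to a graph-level equivariance property of the branched cover $\psi: X_{n+1} \to X_n$ constructed in \cref{functoriality} out of the surjection $p: \Gamma_{n+1} \twoheadrightarrow \Gamma_n$. The map $f$ on $\mathrm{Pic}^{0}$ is induced by the pushforward $\psi_*: \mathrm{Div}(X_{n+1}) \to \mathrm{Div}(X_n)$, which on vertices is just $w \mapsto \psi(w)$ and which descends to Pic and $\mathrm{Pic}^{0}$ by \cref{compatible_pic}. On the other hand, the $\Gamma_{n+1}$-action on $\mathrm{Div}(X_{n+1})$ is obtained by $\mathbb{Z}$-linearly extending the graph automorphisms $\phi_\gamma$ from \cref{basic}, and similarly on the $X_n$ side. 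Hence it suffices to verify the equality of graph morphisms
$$\psi \circ \phi_\gamma \;=\; \phi_{p(\gamma)} \circ \psi : X_{n+1} \longrightarrow X_n$$
for every $\gamma \in \Gamma_{n+1}$.

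Writing $I_v^{(m)} = \pi_m(I_v)$, so that $X_m = X(\Gamma_m, \mathcal{I}_m, \alpha_m)$ with $\mathcal{I}_m = \{(v, I_v^{(m)})\}$, note that $p \circ \pi_{n+1} = \pi_n$ forces both $p(I_v^{(n+1)}) = I_v^{(n)}$ and $p \circ \alpha_{n+1} = \alpha_n$, so the functorial formulas in \cref{functoriality} apply and give $\psi(v, \sigma I_v^{(n+1)}) = (v, p(\sigma) I_v^{(n)})$ and $\psi(e, \sigma) = (e, p(\sigma))$. For a vertex $(v, \sigma I_v^{(n+1)})$ of $X_{n+1}$, I compute
$$\psi\bigl(\phi_\gamma(v, \sigma I_v^{(n+1)})\bigr) = \psi(v, \gamma\sigma I_v^{(n+1)}) = (v, p(\gamma)p(\sigma) I_v^{(n)}),$$
while
$$\phi_{p(\gamma)}\bigl(\psi(v, \sigma I_v^{(n+1)})\bigr) = \phi_{p(\gamma)}(v, p(\sigma) I_v^{(n)}) = (v, p(\gamma)p(\sigma) I_v^{(n)}),$$
and the two agree because $p$ is a group homomorphism. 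The check for a directed edge $(e, \sigma) \in \mathbf{E}_{X_{n+1}}$ is identical: both compositions send it to $(e, p(\gamma)p(\sigma))$.

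Extending by $\mathbb{Z}$-linearity, the pushforward $\psi_*: \mathrm{Div}(X_{n+1}) \to \mathrm{Div}(X_n)$ satisfies $\psi_*(\gamma \cdot D) = p(\gamma) \cdot \psi_*(D)$ for all $D \in \mathrm{Div}(X_{n+1})$ and all $\gamma \in \Gamma_{n+1}$. By \cref{compatible_pic}, $\psi_*$ sends $\mathrm{Pr}(X_{n+1})$ into $\mathrm{Pr}(X_n)$, so passing to quotients gives simultaneously the compatibilities $p(\gamma) \cdot f(x) = f(\gamma \cdot x)$ on $\mathrm{Pic}^{0}$ and on $\mathrm{Pic}$. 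There is no serious obstacle: the entire argument is a bookkeeping exercise with cosets and the functorial formulas of \cref{functoriality}; the one thing to be careful about is reconciling $p(I_v^{(n+1)})$ with $I_v^{(n)}$, which follows from $\pi_n = p \circ \pi_{n+1}$.
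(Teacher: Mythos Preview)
Your proof is correct and follows essentially the same approach as the paper: both reduce the claim to the vertex-level computation showing that $\psi(\gamma\cdot(v,\sigma I_v^{(n+1)})) = p(\gamma)\cdot\psi(v,\sigma I_v^{(n+1)}) = (v,p(\gamma)p(\sigma)I_v^{(n)})$. You are simply more explicit than the paper in framing this as a graph-level equivariance, checking edges as well, and spelling out the descent to ${\rm Pic}$ and ${\rm Pic}^{0}$ via \cref{compatible_pic}.
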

\begin{proof}
Let $w = (v,\sigma \pi_{n+1}(I_{v}))$ be a vertex of $X_{n+1}$.  On one hand, we have
\begin{equation*}
\begin{aligned}
f(\gamma \cdot (v,\sigma\pi_{n+1}(I_{v}))) &= f(v,\gamma\sigma\pi_{n+1}(I_{v})) \\
&= (v,p(\gamma) p(\sigma) \pi_{n}(I_{v})),
\end{aligned}    
\end{equation*}
and on the other hand
\begin{equation*}
\begin{aligned}
p(\gamma) \cdot f(v,\sigma\pi_{n+1}(I_{v})) &= p(\gamma) (v,p(\sigma) \pi_{n}(I_{v})) \\
&= (v,p(\gamma) p(\sigma) \pi_{n}(I_{v}))
\end{aligned}    
\end{equation*}
showing the desired claim.
\end{proof}    
The group morphisms ${\rm Pic}^{0}(X_{n+1}) \rightarrow {\rm Pic}^{0}(X_{n})$ induce maps ${\rm Pic}^{0}_{p}(X_{n+1}) \rightarrow {\rm Pic}^{0}_{p}(X_{n})$ that form a compatible system of $\mathbb{Z}_{p}$-module morphisms.  By \cref{compatible}, the projective limit
\begin{equation} \label{Iwa_module_zero}
{\rm Pic}^{0}_{\Lambda} = \varprojlim_{n \ge 0} {\rm Pic}^{0}_{p}(X_{n}), 
\end{equation}
is a $\Lambda$-module.  Similarly, the group morphisms ${\rm Pic}(X_{n+1}) \rightarrow {\rm Pic}(X_{n})$ induce maps after tensoring with $\mathbb{Z}_{p}$ over $\mathbb{Z}$ that also form a compatible system of $\mathbb{Z}_{p}$-module morphisms ${\rm Pic}_{p}(X_{n+1}) \rightarrow {\rm Pic}_{p}(X_{n})$.  Thus, we obtain another $\Lambda$-module
\begin{equation} \label{Iwa_module}
{\rm Pic}_{\Lambda} = \varprojlim_{n \ge 0} {\rm Pic}_{p}(X_{n}).
\end{equation}

%\subsection{Immersions}
\subsection{Immersions} \label{immersions}

In this paper, we will encounter another type of morphisms for which we now give the precise definition.  
\begin{definition} \label{immersion}
Let $X$ and $Y$ be graphs.  A morphism of graphs $\iota:X \rightarrow Y$ is called an immersion if the induced function
$$\iota|_{\mathbf{E}_{X,v}} :\mathbf{E}_{X,v} \rightarrow \mathbf{E}_{Y,\iota(v)} $$
is injective for all $v \in V_{X}$.
\end{definition}

Starting with a graph $X$, a voltage assignment $\alpha:\mathbf{E}_{X} \rightarrow G$, and a collection of subgroups $\mathcal{I}$ indexed by the vertices of $X$ as in \cref{basic}, one can forget about $\mathcal{I}$ (or take $I_{v} = 1$ for all vertices $v \in V_{X}$), and keep the same voltage assignment $\alpha$.  In this case, the morphism of graphs $p:X(G,\alpha) \rightarrow X$ is an unramified cover of graphs.  Furthermore, we have a commutative diagram of the form
\begin{equation*}
\begin{tikzcd}
X(G,\alpha) \arrow["\iota",r] \arrow{d}[swap]{}  &   X(G,\mathcal{I},\alpha)\arrow{dl}{p}\\
X
\end{tikzcd}
\end{equation*}
where $\iota$ is the morphism of graphs given by 
$$(v,\sigma) \mapsto \iota(v,\sigma) = (v,\sigma I_{v}) \text{ and } (e,\sigma) \mapsto \iota(e,\sigma) = (e,\sigma). $$
Since $\iota$ is a bijection on directed edges, the map $\iota$ is an immersion of graphs.  Note that the vertical map on the left is an unramified cover of graphs, whereas the map $p$ is a branched cover as we explained above.  The morphism of graphs $\iota$ is clearly $G$-equivariant.

%\subsection{Connectedness}
\subsection{Connectedness} \label{connectedness}
We keep the same notation as in \cref{immersions}.

\begin{lemma} \label{conn_unr}
If $X(G,\alpha)$ is connected, then so is $X(G,\mathcal{I},\alpha)$.
\end{lemma}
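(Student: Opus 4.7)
The plan is to exploit the immersion $\iota \colon X(G,\alpha) \to X(G,\mathcal{I},\alpha)$ constructed just before the statement, together with the elementary fact that morphisms of graphs push paths forward to paths.

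First, I would verify that $\iota$ is surjective on both vertices and directed edges. On vertices, an arbitrary vertex of $X(G,\mathcal{I},\alpha)$ has the form $(v,\sigma I_{v})$ for some $v \in V_{X}$ and $\sigma \in G$, and this is precisely $\iota(v,\sigma)$. On directed edges, $\iota$ is defined by $(e,\sigma) \mapsto (e,\sigma)$, so it is even a bijection on $\mathbf{E}$. In particular, every vertex of $X(G,\mathcal{I},\alpha)$ admits a preimage in $X(G,\alpha)$ under $\iota$.

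Next, I would use the standard fact that if $c = e_{1} \cdot \ldots \cdot e_{n}$ is a path in a graph $Z$ and $\varphi \colon Z \to W$ is a morphism of graphs, then $\varphi(c) = \varphi(e_{1}) \cdot \ldots \cdot \varphi(e_{n})$ is a path in $W$ with $o(\varphi(c)) = \varphi(o(c))$ and $t(\varphi(c)) = \varphi(t(c))$; this is immediate from conditions (1) and (2) in \cref{morphism}. Given two vertices $w_{1}, w_{2} \in V_{X(G,\mathcal{I},\alpha)}$, pick preimages $\tilde{w}_{1}, \tilde{w}_{2} \in V_{X(G,\alpha)}$ under $\iota$. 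By the connectedness hypothesis on $X(G,\alpha)$, there exists a path $\tilde{c}$ in $X(G,\alpha)$ going from $\tilde{w}_{1}$ to $\tilde{w}_{2}$ (when $w_{1} = w_{2}$ the claim is trivial, otherwise we may assume $\tilde{w}_{1} \neq \tilde{w}_{2}$ by adjusting the preimage if necessary). Then $\iota(\tilde{c})$ is a path in $X(G,\mathcal{I},\alpha)$ going from $w_{1}$ to $w_{2}$, proving connectedness.

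There is essentially no obstacle here: the argument is a general principle, namely that the image of a connected graph under any surjective (on vertices) morphism of graphs is connected, applied to the concrete morphism $\iota$. The only minor subtlety is the case $w_{1} = w_{2}$, which is handled by taking the trivial path, so no content is lost.
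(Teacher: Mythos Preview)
Your proof is correct and follows essentially the same approach as the paper: pick preimages under $\iota$ of the two given vertices, use connectedness of $X(G,\alpha)$ to join them by a path, and push the path forward via $\iota$. The paper's version is slightly more terse, simply writing the explicit preimages $(v_{1},\sigma_{1})$ and $(v_{2},\sigma_{2})$ for given vertices $(v_{1},\sigma_{1}I_{v_{1}})$ and $(v_{2},\sigma_{2}I_{v_{2}})$ and omitting the verification of surjectivity and the $w_{1}=w_{2}$ aside, but the content is identical.
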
 
\begin{proof}
Let $(v_{1},\sigma_{1}I_{v_{1}})$ and $(v_{2},\sigma_{2}I_{v_{2}})$ be two vertices of $X(G,\mathcal{I},\alpha)$.  Since $X(G,\alpha)$ is assumed to be connected, there exists a path $c$ in $X(G,\alpha)$ going from $(v_{1},\sigma_{1})$ to $(v_{2},\sigma_{2})$.  Then, the path $\iota(c)$ of $X(G,\mathcal{I},\alpha)$ goes from $(v_{1},\sigma_{1}I_{v_{1}})$ to $(v_{2},\sigma_{2}I_{v_{2}})$ showing the claim.
\end{proof}   

Let now $\alpha:\mathbf{E}_{X} \rightarrow \Gamma$ be a voltage assignment and let $\mathcal{I}$ be a collection of closed subgroups of $\Gamma$ indexed by $V_{X}$ as in \cref{towers}.  If we let $X_{n}^{unr} = X(\Gamma_{n},\alpha_{n})$, then we have an unramified $\mathbb{Z}_{p}$-tower of finite graphs
$$X = X_{0} \leftarrow X_{1}^{unr} \leftarrow X_{2}^{unr} \leftarrow \ldots \leftarrow X_{n}^{unr} \leftarrow \ldots$$
that fits into a commutative diagram
\begin{equation*}
\begin{tikzcd}
& \arrow[ld] X_{1}^{unr}  \arrow[d, "\iota_{1}"]  & \arrow[l] X_{2}^{unr}   \arrow[d, "\iota_{2}"] &\arrow[l] \ldots  &\arrow[l] \arrow[d, "\iota_{n}"]  X_{n}^{unr} & \arrow[l]  \ldots\\  
X & \arrow[l]X_{1}  & \arrow[l]X_{2}   & \arrow[l] \ldots  & \arrow[l] X_{n}  \arrow[l] & \arrow[l] \ldots, \\
\end{tikzcd}
\end{equation*}
where the vertical maps are all immersions of graphs.  Letting $\pi_{1}(X,v_{0})$ be the fundamental group of $X$ based at a vertex $v_{0}$, the function $\alpha:\mathbf{E}_{X} \rightarrow \Gamma$ induces a group morphism
$$\rho_{\alpha}:\pi_{1}(X,v_{0}) \rightarrow \Gamma. $$
\begin{theorem} \label{connect}
With the notation as above, all the graphs $X_{n}^{unr}$ are connected if and only if $\rho_{\alpha}(\pi_{1}(X,v_{0}))$ generates $\Gamma$ topologically.
\end{theorem}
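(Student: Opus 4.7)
The plan is to reduce the statement to a characterization at each finite level and then invoke the structure of closed subgroups of $\Gamma\cong\mathbb{Z}_{p}$. At level $n$, the graph $X_{n}^{unr}=X(\Gamma_{n},\alpha_{n})$ is the classical derived graph attached to the voltage assignment $\alpha_{n}=\pi_{n}\circ\alpha$, and one has the factorization $\rho_{\alpha_{n}}=\pi_{n}\circ\rho_{\alpha}$. The heart of the argument is therefore a classical lemma that I would establish first: for any finite group $G$ and any voltage assignment $\beta:\mathbf{E}_{X}\rightarrow G$ on the finite connected graph $X$, the derived graph $X(G,\beta)$ is connected if and only if $\rho_{\beta}(\pi_{1}(X,v_{0}))=G$.

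To prove the lemma I would fix the base point $\widetilde{v}_{0}=(v_{0},1)$ of $X(G,\beta)$ and analyze path-lifting. Given a path $c=e_{1}\cdots e_{k}$ in $X$ starting at $v_{0}$, there is a unique lift starting at $\widetilde{v}_{0}$, namely
\[
(e_{1},1)\cdot(e_{2},\beta(e_{1}))\cdots(e_{k},\beta(e_{1})\cdots\beta(e_{k-1})),
\]
which terminates at $(t(c),\beta(e_{1})\cdots\beta(e_{k}))$; conversely, every path in $X(G,\beta)$ from $\widetilde{v}_{0}$ arises in this way. It follows that the connected component of $\widetilde{v}_{0}$ meets the fibre $\{v_{0}\}\times G$ in exactly $\{v_{0}\}\times H$, where $H=\rho_{\beta}(\pi_{1}(X,v_{0}))$. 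Since $G$ acts on $X(G,\beta)$ by graph automorphisms transitively on each fibre, the components are permuted transitively by $G$ and the stabilizer of the component containing $\widetilde{v}_{0}$ is $H$; hence the number of components of $X(G,\beta)$ equals $[G:H]$, proving the lemma.

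Combining the lemma with $\rho_{\alpha_{n}}=\pi_{n}\circ\rho_{\alpha}$, every $X_{n}^{unr}$ is connected if and only if $\pi_{n}(\rho_{\alpha}(\pi_{1}(X,v_{0})))=\Gamma_{n}$ for all $n\geq 0$. The final step is to observe that, since the proper closed subgroups of $\Gamma\cong\mathbb{Z}_{p}$ are precisely the open subgroups $\Gamma^{p^{n}}=\ker(\pi_{n})$, a subgroup $H\leq\Gamma$ satisfies $\overline{H}=\Gamma$ if and only if $\pi_{n}(H)=\Gamma_{n}$ for every $n$; this is immediate from the identity $\overline{H}=\bigcap_{n\geq 0}H\cdot\Gamma^{p^{n}}$ in a topological group with $\{\Gamma^{p^{n}}\}$ as a neighborhood basis of the identity. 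The main obstacle is the bookkeeping in the classical lemma, in particular verifying that loops at $v_{0}$ account for exactly the subgroup $H$; the remaining steps are essentially formal.
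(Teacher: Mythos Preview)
Your argument is correct. Note that the paper does not actually give its own proof of this statement: it simply cites \cite[Theorem~2.11]{Ray/Vallieres:2023}. What you have written is a complete, self-contained proof that presumably reconstructs the content of that reference; the key ingredient---that the derived graph $X(G,\beta)$ has exactly $[G:\rho_{\beta}(\pi_{1}(X,v_{0}))]$ connected components---is the classical monodromy criterion from voltage-graph theory, and your path-lifting verification of it is clean and correct.

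One harmless slip: you write that the proper closed subgroups of $\Gamma\cong\mathbb{Z}_{p}$ are ``precisely the open subgroups $\Gamma^{p^{n}}$,'' but the trivial subgroup is also closed and is not of this form. This does not affect your proof, since you do not actually use that classification; you rely instead on the identity $\overline{H}=\bigcap_{n\geq 0}H\cdot\Gamma^{p^{n}}$, which is correct and suffices for the equivalence you need.
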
 
\begin{proof}
This follows from \cite[Theorem 2.11]{Ray/Vallieres:2023}.
\end{proof}
Combined with Lemma \ref{conn_unr} and the discussion in Section $2.3.1$ of \cite{Ray/Vallieres:2023}, one has a sufficient condition that can be checked explicitly to construct branched $\mathbb{Z}_{p}$-towers of graphs, and guarantee the connectedness of all the graphs $X_{n}$.

In \cref{main_conj1} below, we will also consider the infinite graph $X_{\infty}^{unr} = X(\Gamma,\alpha)$.  Note that $X_{\infty}^{unr}$ is locally finite, whereas $X_{\infty}$ is not necessarily.

%\subsection{A useful lemma}
\subsection{A useful lemma} \label{useful_lemma}
Let us keep again the same notation as in \cref{immersions}.  The following lemma and corollary will be used in \cref{iwasawa_theory} below.
\begin{lemma} \label{tech}
To simplify the notation, let $Y^{unr} = X(G,\alpha)$, $Y = X(G,\mathcal{I},\alpha)$, and assume that $X,Y$ and $Y^{unr}$ are all locally finite graphs.  Since $\iota: Y^{unr} \rightarrow Y$ is surjective on vertices, we obtain a natural surjective group morphism
$$\iota_{*}:{\rm Div}(Y^{unr}) \rightarrow {\rm Div}(Y).$$
Assuming that $I_{v}$ is finite for all $v \in V_{X}$, then for all $v \in V_{X}$ and for all $\sigma \in G$, one has
$$\mathcal{L}_{Y} \circ \iota_{*}((v,\sigma)) = \sum_{i \in I_{v}} \iota_{*} \circ \mathcal{L}_{Y^{unr}}((v,\sigma \cdot i)). $$
\end{lemma}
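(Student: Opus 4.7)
The plan is to verify the identity by direct computation, unwinding both sides using the explicit construction of $Y = X(G,\mathcal{I},\alpha)$ from \cref{basic}. Since $\mathcal{L} = \mathcal{D} - \mathcal{A}$, I will check separately that the degree and adjacency operators satisfy the analogous identities; then the claim follows by subtraction.

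First I would compute the left-hand side. Since $\iota_{*}((v,\sigma)) = (v,\sigma I_{v})$, I need to describe the edges of $Y$ originating at $(v,\sigma I_v)$. A directed edge $(e',\tau) \in \mathbf{E}_Y = \mathbf{E}_X \times G$ has origin $(v,\sigma I_v)$ precisely when $o(e') = v$ and $\tau I_v = \sigma I_v$, i.e.\ $\tau \in \sigma I_v$. Thus
\[
\mathbf{E}_{Y,(v,\sigma I_v)} \;=\; \bigl\{(e',\sigma i) : e' \in \mathbf{E}_{X,v},\ i \in I_v\bigr\},
\]
so $\mathrm{val}_Y((v,\sigma I_v)) = |I_v|\cdot\mathrm{val}_X(v)$ and
\[
\mathcal{L}_Y\bigl((v,\sigma I_v)\bigr) \;=\; |I_v|\cdot \mathrm{val}_X(v)\cdot (v,\sigma I_v) \;-\; \sum_{e' \in \mathbf{E}_{X,v}} \sum_{i \in I_v} \bigl(t(e'),\, \sigma i\, \alpha(e')\, I_{t(e')}\bigr).
\]

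Next I would compute the right-hand side. In $Y^{unr}$, the edges at $(v,\sigma i)$ are exactly $(e',\sigma i)$ for $e' \in \mathbf{E}_{X,v}$, so
\[
\mathcal{L}_{Y^{unr}}\bigl((v,\sigma i)\bigr) \;=\; \mathrm{val}_X(v)\cdot (v,\sigma i) \;-\; \sum_{e' \in \mathbf{E}_{X,v}} \bigl(t(e'),\, \sigma i\, \alpha(e')\bigr).
\]
Applying $\iota_{*}$, and using the crucial identity $\sigma i \, I_v = \sigma I_v$ for $i \in I_v$, I obtain
\[
\iota_{*} \circ \mathcal{L}_{Y^{unr}}\bigl((v,\sigma i)\bigr) \;=\; \mathrm{val}_X(v)\cdot (v,\sigma I_v) \;-\; \sum_{e' \in \mathbf{E}_{X,v}} \bigl(t(e'),\, \sigma i\, \alpha(e')\, I_{t(e')}\bigr).
\]
Summing this over $i \in I_v$ (a finite sum by hypothesis) yields precisely the expression computed for $\mathcal{L}_Y \circ \iota_{*}((v,\sigma))$, finishing the proof.

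There is no genuine obstacle; the lemma is essentially a bookkeeping verification that the Laplacian on $Y$ decomposes as the ``$\iota_{*}$-pushforward'' of the Laplacian on $Y^{unr}$, averaged over the fiber $\iota^{-1}((v,\sigma I_v)) = \{(v,\sigma i) : i \in I_v\}$. The only subtlety worth flagging is the identification $\sigma i\, I_v = \sigma I_v$ that collapses the diagonal term into a single multiple of $(v,\sigma I_v)$, which is precisely why the ramification index $m_{w} = |I_v|$ appears as a combinatorial multiplicity on both sides and balances out correctly.
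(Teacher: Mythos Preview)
Your proof is correct and follows essentially the same approach as the paper's own proof: both compute $\mathcal{L}_{Y}\circ\iota_{*}((v,\sigma))$ and $\sum_{i\in I_v}\iota_{*}\circ\mathcal{L}_{Y^{unr}}((v,\sigma i))$ explicitly by unwinding the edge sets $\mathbf{E}_{Y,(v,\sigma I_v)}$ and $\mathbf{E}_{Y^{unr},(v,\sigma i)}$, and observe that the two expressions coincide. The only cosmetic difference is that you announce a separate treatment of $\mathcal{D}$ and $\mathcal{A}$ but then carry out the computation for $\mathcal{L}$ directly, just as the paper does.
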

\begin{proof}
On one hand, we have
\begin{equation*}
\begin{aligned}
\mathcal{L}_{Y} \circ \iota_{*}((v,\sigma)) &= \mathcal{L}_{Y}((v,\sigma I_{v})) \\ 
&=\mathcal{D}_{Y}((v,\sigma I_{v})) - \mathcal{A}_{Y}((v,\sigma I_{v})) \\
&=|I_{v}| {\rm val}_{X}(v) (v,\sigma I_{v}) - \sum_{\varepsilon \in \mathbf{E}_{Y,(v,\sigma I_{v})}} t(\varepsilon) \\
&=|I_{v}| {\rm val}_{X}(v) (v,\sigma I_{v}) - \sum_{e \in \mathbf{E}_{X,v}} \sum_{\tau \in \sigma I_{v}}(t(e),\tau \alpha(e) I_{t(e)}) \\
&=|I_{v}| {\rm val}_{X}(v) (v,\sigma I_{v}) - \sum_{e \in \mathbf{E}_{X,v}} \sum_{i \in I_{v}}(t(e),\sigma i \alpha(e) I_{t(e)}).
\end{aligned}
\end{equation*}
On the other hand, we have
\begin{equation*}
\begin{aligned}
\mathcal{L}_{Y^{unr}}((v,\sigma i)) &= {\rm val}_{X}(v) (v,\sigma i) - \sum_{\varepsilon \in \mathbf{E}_{Y^{unr},(v,\sigma i)}}t(\varepsilon) \\ 
&= {\rm val}_{X}(v) (v,\sigma i) - \sum_{e \in \mathbf{E}_{X,v}}(t(e),\sigma i \alpha(e)).
\end{aligned}
\end{equation*}
Therefore,
$$\iota_{*} \circ \mathcal{L}_{Y^{unr}}((v,\sigma i)) = {\rm val}_{X}(v)(v,\sigma I_{v}) - \sum_{e \in \mathbf{E}_{X,v}}(t(e),\sigma i \alpha(e) I_{t(e)})$$
from which it follows that
$$\sum_{i \in I_{v}} \iota_{*} \circ \mathcal{L}_{Y^{unr}}((v,\sigma i)) = |I_{v}| {\rm val}_{X}(v) (v,\sigma I_{v}) - \sum_{e \in \mathbf{E}_{X,v}} \sum_{i \in I_{v}}(t(e),\sigma i \alpha(e) I_{t(e)}),$$
and this ends the proof.
\end{proof}
\begin{corollary} \label{generators1}
With the same notation as in \cref{tech}, for each $v \in V_{X}$, let
$$P_{v} = \iota_{*} \circ \mathcal{L}_{Y^{unr}}((v,1_{G})) \in {\rm Div}(Y). $$
Then ${\rm Pr}(Y)$ is generated over $\mathbb{Z}[G]$ by 
$$\sum_{i \in I_{v}} i P_{v} $$
as $v$ runs over all vertices in $X$.
\end{corollary}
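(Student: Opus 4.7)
The plan is to start from the defining fact that ${\rm Pr}(Y) = {\rm Im}(\mathcal{L}_{Y})$ is generated over $\mathbb{Z}$ by the set $\{\mathcal{L}_{Y}(w) : w \in V_{Y}\}$, and whittle this down to the stated generators over $\mathbb{Z}[G]$ in two steps: first reducing to a $v$-indexed generating set via the $G$-action, and then rewriting those generators using \cref{tech}.

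First I would use that $G$ acts transitively on each fiber of the branched cover $Y \to X$ of \cref{explicit_cov}, so that every vertex of $Y$ has the form $(v, \sigma I_{v}) = \phi_{\sigma}((v, I_{v}))$ for some $v \in V_{X}$ and some $\sigma \in G$. Combined with the $G$-equivariance of $\mathcal{L}_{Y}$ (the proposition preceding \cref{G_module}), this gives
$$\mathcal{L}_{Y}((v, \sigma I_{v})) = \sigma \cdot \mathcal{L}_{Y}((v, I_{v})),$$
and hence ${\rm Pr}(Y)$ is generated as a $\mathbb{Z}[G]$-module by the finite set $\{\mathcal{L}_{Y}((v, I_{v})) : v \in V_{X}\}$.

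The second step is to rewrite each of these generators in terms of $P_{v}$. Specializing \cref{tech} to $\sigma = 1_{G}$, and noting $\iota_{*}((v, 1_{G})) = (v, I_{v})$, yields
$$\mathcal{L}_{Y}((v, I_{v})) = \sum_{i \in I_{v}} \iota_{*} \circ \mathcal{L}_{Y^{unr}}((v, i)).$$
To bring this into the form $\sum_{i \in I_{v}} i P_{v}$, I would observe that $(v, i) = \phi_{i}((v, 1_{G}))$ in $Y^{unr}$ and that both $\mathcal{L}_{Y^{unr}}$ and $\iota_{*}$ are $G$-equivariant (the former again by the Laplacian-equivariance proposition, the latter because $\iota$ itself is $G$-equivariant as noted in \cref{immersions}). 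Thus $\iota_{*} \circ \mathcal{L}_{Y^{unr}}((v, i)) = i \cdot P_{v}$, and the sum collapses to the required expression.

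There is no real obstacle in this argument: it is essentially an application of \cref{tech} followed by a bookkeeping of the various $G$-equivariance statements already established. The only point that requires a touch of care is making sure the $G$-action on $V_{Y^{unr}}$ intertwines correctly with $\iota_{*}$ before reading off the final form; all the necessary equivariance is in place by construction.
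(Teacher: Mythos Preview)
Your proposal is correct and follows essentially the same route as the paper's proof: the paper simply states that ${\rm Pr}(Y)$ is generated over $\mathbb{Z}[G]$ by the divisors $\mathcal{L}_{Y}\circ\iota_{*}((v,1_{G}))$ and then invokes \cref{tech}, leaving the $G$-equivariance bookkeeping implicit. Your version just spells out those equivariance details (for $\mathcal{L}_{Y}$, $\mathcal{L}_{Y^{unr}}$, and $\iota_{*}$) that the paper takes for granted.
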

\begin{proof}
By definition, ${\rm Pr}(Y)$ is generated over $\mathbb{Z}[G]$ by the divisors $\mathcal{L}_{Y} \circ \iota_{*}((v,1_{G}))$ as $v$ runs through $V_{X}$.  The result then follows directly from \cref{tech}.
\end{proof}

%\section{Iwasawa theory for branched $\mathbb{Z}_{p}$towers of graphs}
\section{Iwasawa theory for branched \texorpdfstring{$\mathbb{Z}_{p}$}{}-towers of graphs} \label{iwasawa_theory}
We keep the same notation as before.  So $X=(V_{X},\mathbf{E}_{X})$ is a finite connected graph, $\Gamma$ is a multiplicative group that is topologically isomorphic to $\mathbb{Z}_{p}$ and $\alpha:\mathbf{E}_{X}\xrightarrow[]{} \Gamma$ is a voltage assignment.  For each $v\in V_{X}$, we pick a closed subgroup $I_v$ of $\Gamma$, 
and this gives rise to a branched $\mathbb{Z}_{p}$-tower of graphs
$$X=X_0 \xleftarrow[]{} X_1 \xleftarrow[]{} X_2 \xleftarrow[]{} ... \xleftarrow[]{} X_n \xleftarrow[]{}...$$
as explained \cref{towers}.  Recall that we always assume all of the graphs $X_{n}$ to be connected.  See \cref{connectedness} for a sufficient criterion that guarantees this condition is satisfied.

Our goal in this section is to study the $\Lambda$-module ${\rm Pic}^{0}_{\Lambda}$ that was defined above in (\ref{Iwa_module_zero}).  In order to do so, we will use the infinite graph $X_{\infty} = X(\Gamma,\mathcal{I},\alpha)$.  The short exact sequence (\ref{ses_basic_p}) for the graph $X_{\infty}$ gives a short exact sequence
$$0 \longrightarrow {\rm Div}^{0}_{p}(X_{\infty}) \longrightarrow {\rm Div}_{p}(X_{\infty}) \stackrel{s_{p}}{\longrightarrow} \mathbb{Z}_{p}\longrightarrow  0 $$
of $\mathbb{Z}_{p}[\Gamma]$-modules, since $\Gamma$ is acting on all of these modules, the action of $\Gamma$ on $\mathbb{Z}_{p}$ being the trivial one.  Recall from \cref{Iwasawa_algebra} that we set $R = \mathbb{Z}_{p}[\Gamma]$.  Letting
$${\rm Div}_{\Lambda}(X_{\infty}) = \Lambda \otimes_{R} {\rm Div}_{p}(X_{\infty}), $$
the surjective $R$-module morphism $s_{p}:{\rm Div}_{p}(X_{\infty}) \rightarrow \mathbb{Z}_{p}$ induces a surjective morphism
\begin{equation} \label{surj_}
{\rm id} \otimes s_{p}: {\rm Div}_{\Lambda}(X_{\infty}) \rightarrow \Lambda \otimes_{R}\mathbb{Z}_{p} 
\end{equation}
of $\Lambda$-modules.  We let 
\begin{equation} \label{degree_zero}
{\rm Div}_{\Lambda}^{0}(X_{\infty}) = {\rm ker}({\rm id} \otimes s_{p}).
\end{equation}
It follows from (\ref{trivial_module}) that 
$$\Lambda \otimes_{R} \mathbb{Z}_{p} \simeq \Lambda \otimes_{R} (R/R \cap \omega_{0}\Lambda) \simeq \Lambda/\omega_{0}\Lambda \simeq \mathbb{Z}_{p}$$
as $\Lambda$-modules, where the element $\omega_{0} = \gamma_{0} - 1 \in R \subseteq \Lambda$ was defined in \cref{Iwasawa_algebra}.  After making this identification, (\ref{surj_}) and (\ref{degree_zero}) gives us a short exact sequence
\begin{equation} \label{useful_ses}
0 \longrightarrow {\rm Div}_{\Lambda}^{0}(X_{\infty}) \longrightarrow {\rm Div}_{\Lambda}(X_{\infty}) \stackrel{s_{\Lambda}}{\longrightarrow} \mathbb{Z}_{p} \rightarrow 0 
\end{equation}
of $\Lambda$-modules.  At the finite level, for each $n \ge 0$ we also have a short exact sequence
\begin{equation} \label{div_finite}
0 \rightarrow {\rm Div}_{p}^{0}(X_{n}) \rightarrow {\rm Div}_{p}(X_{n}) \rightarrow \mathbb{Z}_{p} \rightarrow 0
\end{equation}
of $\mathbb{Z}_{p}[\Gamma_{n}]$-modules, since $\Gamma_{n}$ acts on all of these modules, the action of $\Gamma_{n}$ on $\mathbb{Z}_{p}$ being the trivial one.  Via the natural projection map $\Lambda \twoheadrightarrow \Lambda/\omega_{n}\Lambda \simeq \mathbb{Z}_{p}[\Gamma_{n}]$, we can actually view the short exact sequence (\ref{div_finite}) as one of $\Lambda$-modules as well.  The branched cover $\pi_{n}:X_{\infty} \rightarrow X_{n}$ induces a natural surjective morphism
$$\pi_{n}:{\rm Div}_{p}(X_{\infty}) \rightarrow {\rm Div}_{p}(X_{n}) $$
of $R$-modules.  The group ${\rm Div}_{p}(X_{\infty})$ is only a module over $R$, but since ${\rm Div}_{p}(X_{n})$ is a module over $\Lambda$, we obtain a natural surjective morphism
\begin{equation} \label{essential}
\pi_{n}:{\rm Div}_{\Lambda}(X_{\infty}) \rightarrow {\rm Div}_{p}(X_{n}) 
\end{equation}
of $\Lambda$-modules which we denote by the same symbol.  This map induces the commutative diagram of $\Lambda$-modules
\begin{equation*}
\begin{tikzcd}
0 \arrow[r] & {\rm Div}_{\Lambda}^{0}(X_{\infty}) \arrow[d] \arrow[r] & {\rm Div}_{\Lambda}(X_{\infty}) \arrow[d,"\pi_{n}"]\arrow[r,"s_{\Lambda}"] & \arrow[d]\arrow[r] \mathbb{Z}_{p} & 0 \\
0 \arrow[r] & {\rm Div}_{p}^{0}(X_{n}) \arrow[r] & {\rm Div}_{p}(X_{n}) \arrow[r,"s_{p}"] & \arrow[r] \mathbb{Z}_{p} & 0,
\end{tikzcd}
\end{equation*}
where the right vertical map is the identity, and both horizontal sequences are exact.

The structure of ${\rm Div}_{\Lambda}(X_{\infty})$ can be understood as follows.  For simplicity, we write $V$ instead of $V_{X}$.  We let $V^{ram}$ be the set of ramified vertices, that is the collection of vertices $v \in V$ for which $I_{v}$ is non-trivial.  We also let $V^{unr} = V \smallsetminus V^{ram}$.  From now on, we introduce a labeling of $V$, say $V = \{v_{1},\ldots, v_{s} \}$, where we agree that 
$$V^{unr} = \{v_{1},\ldots,v_{r} \} \text{ and } V^{ram} = \{v_{r+1},\ldots,v_{s} \}.$$  
For each $i=1,\ldots,s$, we let $w_{i,\infty}$ be the vertex $(v_{i},I_{i})$ of $V_{X_{\infty}}$, where we write $I_{i}$ instead of $I_{v_{i}}$.  Then, as an $R$-module we have
\begin{equation} \label{R_struct}
{\rm Div}_{p}(X_{\infty}) = \bigoplus_{i=1}^{r} \mathbb{Z}_{p}[\Gamma] \cdot w_{i,\infty} \oplus \bigoplus_{i=r+1}^{s} \mathbb{Z}_{p}[\Gamma/I_{i}] \cdot w_{i,\infty}.
\end{equation}
From now on, we let $k_{i}$ be the non-negative integer such that $I_{i} = \Gamma^{p^{k_{i}}}$. Note that $\mathbb{Z}_{p}[\Gamma/I_{i}] \simeq \mathbb{Z}_{p}[\Gamma_{k_{i}}]$, and we have isomorphisms of $\Lambda$-modules
$$\Lambda \otimes_{R} \mathbb{Z}_{p}[\Gamma/I_{i}] \simeq \Lambda \otimes_{R} \mathbb{Z}_{p}[\Gamma_{k_{i}}] \simeq \Lambda \otimes_{R} (R/R \cap \omega_{k_{i}}\Lambda) \simeq \Lambda/\omega_{k_{i}}\Lambda \simeq \mathbb{Z}_{p}[\Gamma_{k_{i}}]. $$
It follows that tensoring (\ref{R_struct}) with $\Lambda$ over $R$ gives
\begin{equation} \label{struc_mod_inf}
\begin{aligned}
{\rm Div}_{\Lambda}(X_{\infty}) &= \bigoplus_{i=1}^{r} \Lambda \cdot w_{i,\infty} \oplus \bigoplus_{i=r+1}^{s} \mathbb{Z}_{p}[\Gamma_{k_{i}}] \cdot w_{i,\infty} \\
& \simeq \Lambda^{r} \oplus \bigoplus_{i=r+1}^{s} \mathbb{Z}_{p}[\Gamma_{k_{i}}].
\end{aligned}
\end{equation}
Similarly, at the finite level we set
$$I_{i,n} = I_{i} \cdot \Gamma^{p^{n}}/\Gamma^{p^{n}} $$
which is the image of $I_{i}$ via the natural projection map $\pi_{n}:\Gamma \twoheadrightarrow \Gamma_{n}$, and we let $w_{i,n}$ be the vertex $(v_{i},I_{i,n})$ of $X_{n}$.  We then have
$${\rm Div}_{p}(X_{n}) = \bigoplus_{i=1}^{r}\mathbb{Z}[\Gamma_{n}] \cdot w_{i,n} \oplus \bigoplus_{i=r+1}^{s} \mathbb{Z}_{p}[\Gamma_{n}/I_{i,n}] \cdot w_{i,n}$$
as $\mathbb{Z}_{p}[\Gamma_{n}]$-modules.  Note that if $n \ge k_{i}$, then $I_{i,n} = \Gamma^{p^{k_{i}}}/\Gamma^{p^{n}}$.  
\begin{assumption} \label{n_large}
From now on, we assume that $n$ is large enough so that 
$$I_{i,n} = \Gamma^{p^{k_{i}}}/\Gamma^{p^{n}}$$ 
for all $i=r+1,\ldots,s$.
\end{assumption}
Under \cref{n_large}, we have
\begin{equation} \label{struc_mod_finite}
\begin{aligned}
{\rm Div}_{p}(X_{n}) &= \bigoplus_{i=1}^{r}\mathbb{Z}[\Gamma_{n}] \cdot w_{i,n} \oplus \bigoplus_{i=r+1}^{s} \mathbb{Z}_{p}[\Gamma_{k_{i}}] \cdot w_{i,n}\\
&\simeq \mathbb{Z}[\Gamma_{n}]^{r} \oplus \bigoplus_{i=r+1}^{s} \mathbb{Z}_{p}[\Gamma_{k_{i}}],
\end{aligned}
\end{equation}
since $\Gamma_{n}/I_{i,n} \simeq \Gamma/I_{i} \simeq \Gamma_{k_{i}}$.  

Note that from (\ref{struc_mod_inf}) and (\ref{struc_mod_finite}), the $\Lambda$-module morphism (\ref{essential}) is given by the natural projection map $\Lambda \twoheadrightarrow \mathbb{Z}_{p}[\Gamma_{n}]$ on the unramified component and by the identity map on the ramified component.
\begin{lemma} \label{kernel1}
The kernel of the $\Lambda$-module morphism $\pi_{n}$ from (\ref{essential}) is given by 
$${\rm ker}(\pi_{n}) = \omega_{n} {\rm Div}_{\Lambda}(X_{\infty}).$$
\end{lemma}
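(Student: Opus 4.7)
The plan is to use the explicit direct sum decompositions that were just established, namely
\[
{\rm Div}_{\Lambda}(X_{\infty}) \simeq \Lambda^{r} \oplus \bigoplus_{i=r+1}^{s} \mathbb{Z}_{p}[\Gamma_{k_{i}}]
\quad\text{and}\quad
{\rm Div}_{p}(X_{n}) \simeq \mathbb{Z}_{p}[\Gamma_{n}]^{r} \oplus \bigoplus_{i=r+1}^{s} \mathbb{Z}_{p}[\Gamma_{k_{i}}],
\]
together with the remark made right before the lemma that $\pi_{n}$ acts on each unramified summand $\Lambda \cdot w_{i,\infty}$ via the natural projection $\Lambda \twoheadrightarrow \Lambda/\omega_{n}\Lambda \simeq \mathbb{Z}_{p}[\Gamma_{n}]$, and on each ramified summand $\mathbb{Z}_{p}[\Gamma_{k_{i}}] \cdot w_{i,\infty}$ as the identity map.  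Since both sides of the claimed equality respect the direct sum decomposition, it suffices to prove the equality summand by summand.

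First I would compute $\ker(\pi_{n})$ in each summand.  On an unramified summand, the kernel of $\Lambda \twoheadrightarrow \mathbb{Z}_{p}[\Gamma_{n}]$ is precisely $\omega_{n}\Lambda$ by the first isomorphism in (\ref{proj}).  On a ramified summand the map is the identity, so the kernel there is zero.  Thus
\[
\ker(\pi_{n}) = \omega_{n}\Lambda^{r} \oplus 0 \subseteq \Lambda^{r} \oplus \bigoplus_{i=r+1}^{s} \mathbb{Z}_{p}[\Gamma_{k_{i}}].
\]

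Next I would compute $\omega_{n}{\rm Div}_{\Lambda}(X_{\infty})$ summand by summand.  On each unramified summand one obviously gets $\omega_{n}\Lambda$.  On a ramified summand $\mathbb{Z}_{p}[\Gamma_{k_{i}}]$, \cref{n_large} forces $n \ge k_{i}$, and since $\gamma^{p^{n}}$ is the identity in $\Gamma_{k_{i}}$ whenever $n \ge k_{i}$, the element $\omega_{n} = \gamma^{p^{n}}-1$ maps to $0$ under the structural map $\Lambda \twoheadrightarrow \Lambda/\omega_{k_{i}}\Lambda \simeq \mathbb{Z}_{p}[\Gamma_{k_{i}}]$, which is the $\Lambda$-action; equivalently, $\omega_{k_{i}}$ divides $\omega_{n}$ in $\Lambda$.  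Hence $\omega_{n}$ annihilates each ramified summand and we get $\omega_{n}{\rm Div}_{\Lambda}(X_{\infty}) = \omega_{n}\Lambda^{r} \oplus 0$, which matches $\ker(\pi_{n})$ exactly.

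There is essentially no obstacle here: the lemma is a direct computation once one has the explicit decompositions of ${\rm Div}_{\Lambda}(X_{\infty})$ and ${\rm Div}_{p}(X_{n})$ in hand.  The only point that genuinely uses the hypothesis is the vanishing of $\omega_{n}$ on the ramified summands, which is exactly why \cref{n_large} was imposed.
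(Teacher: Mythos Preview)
Your proposal is correct and follows essentially the same approach as the paper: both arguments use the explicit decompositions (\ref{struc_mod_inf}) and (\ref{struc_mod_finite}) to identify $\ker(\pi_{n})$ as $\bigoplus_{i=1}^{r}\omega_{n}\Lambda\cdot w_{i,\infty}$, and then invoke \cref{n_large} to obtain $\omega_{k_{i}}\mid\omega_{n}$, so that $\omega_{n}$ annihilates each ramified summand and $\omega_{n}{\rm Div}_{\Lambda}(X_{\infty})$ coincides with that kernel.
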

\begin{proof}
From (\ref{struc_mod_inf}) and (\ref{struc_mod_finite}), we get a short exact sequence 
$$0 \longrightarrow \bigoplus_{i=1}^{r} \omega_{n} \Lambda \cdot w_{i,\infty} \longrightarrow {\rm Div}_{\Lambda}(X_{\infty}) \stackrel{\pi_{n}}{\longrightarrow} {\rm Div}_{p}(X_{n}) \longrightarrow 0 $$
of $\Lambda$-modules.  Because of \cref{n_large}, we have $\omega_{k_{i}}\, | \, \omega_{n}$.  Therefore, 
$$\omega_{n}  \mathbb{Z}_{p}[\Gamma_{k_{i}}] \simeq \omega_{n} (\Lambda/\omega_{k_{i}}\Lambda) = 0. $$
It follows that 
$$\omega_{n} {\rm Div}_{\Lambda}(X_{\infty}) = \bigoplus_{i=1}^{r} \omega_{n} \Lambda \cdot w_{i,\infty}, $$
and the result follows.
\end{proof}
Note that it follows from (\ref{R_struct}) and (\ref{struc_mod_inf}) that the natural morphism of $R$-modules
$${\rm Div}_{p}(X_{\infty}) \rightarrow {\rm Div}_{\Lambda}(X_{\infty}) $$
is injective, so that we can view divisors in ${\rm Div}_{p}(X_{\infty})$ as lying in ${\rm Div}_{\Lambda}(X_{\infty})$.
\begin{definition}
For each $i=1,\ldots,s$, we let
\begin{equation} \label{generators}
P_{i,\infty} = {\rm val}_{X}(v_{i}) w_{i,\infty} - \sum_{e \in \mathbf{E}_{X,v_{i}}}(t(e),\alpha(e)I_{t(e)}) \in {\rm Div}_{\Lambda}(X_{\infty}). 
\end{equation}
Moreover, we define ${\rm Pr}_{\Lambda}^{unr}$ to be the $\Lambda$-submodule of ${\rm Div}_{\Lambda}(X_{\infty})$ generated by  
$$\{P_{i,\infty}: i=1,\ldots,r \},$$ 
and we let ${\rm Pr}_{\Lambda,n}^{ram}$ be the $\Lambda$-submodule of ${\rm Div}_{\Lambda}(X_{\infty})$ generated by
$$\left \{\omega_{n,k_{i}} P_{i,\infty}: i = r+1,\ldots,s\right \}, $$
where the elements $\omega_{n,k_{i}}$ were defined in \cref{Iwasawa_algebra}. 
\end{definition}

\begin{theorem} \label{finite_level}
Under \cref{n_large}, the morphism $\pi_{n}:{\rm Div}_{\Lambda}(X_{\infty}) \rightarrow {\rm Div}_{p}(X_{n})$ induces isomorphisms
$${\rm Div}_{\Lambda}(X_{\infty})/N_{n} \stackrel{\simeq}{\longrightarrow} {\rm Pic}_{p}(X_{n}) \text{ and } {\rm Div}_{\Lambda}^{0}(X_{\infty})/N_{n} \stackrel{\simeq}{\longrightarrow} {\rm Pic}_{p}^{0}(X_{n}), $$
of $\Lambda$-modules, where
$$N_{n} = \omega_{n} {\rm Div}_{\Lambda}(X_{\infty}) + {\rm Pr}_{\Lambda,n}^{ram} + {\rm Pr}_{\Lambda}^{unr}. $$
\end{theorem}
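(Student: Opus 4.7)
The plan is to leverage Lemma~\ref{kernel1} and Corollary~\ref{generators1} together, reducing the statement to computing $\pi_n(N_n)$ and then handling the degree-zero refinement via a snake-lemma argument. By Lemma~\ref{kernel1}, $\pi_n$ already descends to an isomorphism ${\rm Div}_\Lambda(X_\infty)/\omega_n{\rm Div}_\Lambda(X_\infty) \xrightarrow{\simeq} {\rm Div}_p(X_n)$. Thus the first isomorphism of the theorem will follow as soon as I can show $\pi_n^{-1}({\rm Pr}_p(X_n)) = N_n$; one containment ($\supseteq$) only requires checking $\pi_n(N_n) \subseteq {\rm Pr}_p(X_n)$, and the other follows from the kernel computation via a one-line diagram chase.

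The key computation is to explicitly evaluate $\pi_n(P_{i,\infty})$ using the identification of $\pi_n$ on the decompositions (\ref{struc_mod_inf}) and (\ref{struc_mod_finite}), where it acts as the projection $\Lambda \twoheadrightarrow \mathbb{Z}_p[\Gamma_n]$ on the unramified summands and as the identity on the ramified summands (this is where Assumption~\ref{n_large} enters). A direct check then shows that $\pi_n(P_{i,\infty})$ equals the divisor $\iota_{n,*} \circ \mathcal{L}_{X_n^{unr}}((v_i, 1))$ appearing in Corollary~\ref{generators1}. The decisive observation is that $\omega_{n,k_i} = 1 + \gamma_{k_i} + \cdots + \gamma_{k_i}^{p^{n-k_i}-1}$ is, under Assumption~\ref{n_large}, precisely the sum $\sum_{\tau \in I_{v_i, n}} \tau$ over the stabilizer $I_{v_i, n} = \Gamma^{p^{k_i}}/\Gamma^{p^n}$ (with the convention that it is $1$ when $I_{v_i, n}$ is trivial). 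Hence the images $\pi_n(P_{i,\infty})$ for $1 \le i \le r$ together with $\pi_n(\omega_{n, k_i} P_{i,\infty})$ for $r < i \le s$ are exactly the $\mathbb{Z}_p[\Gamma_n]$-module generators of ${\rm Pr}_p(X_n)$ produced by Corollary~\ref{generators1}. Consequently $\pi_n({\rm Pr}_\Lambda^{unr} + {\rm Pr}_{\Lambda, n}^{ram}) = {\rm Pr}_p(X_n)$, which combined with $\ker(\pi_n) = \omega_n {\rm Div}_\Lambda(X_\infty)$ gives $\pi_n^{-1}({\rm Pr}_p(X_n)) = N_n$ and establishes the first isomorphism.

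For the second isomorphism, I would first verify the containment $N_n \subseteq {\rm Div}_\Lambda^0(X_\infty)$: each $P_{i,\infty}$ has degree ${\rm val}_X(v_i) - |\mathbf{E}_{X, v_i}| = 0$, and $\omega_n$ annihilates the trivial $\Gamma$-module $\mathbb{Z}_p$, so $\omega_n {\rm Div}_\Lambda(X_\infty) \subseteq \ker(s_\Lambda) = {\rm Div}_\Lambda^0(X_\infty)$. A snake-lemma argument applied to the commutative diagram with rows (\ref{useful_ses}) and (\ref{div_finite}) and middle vertical map $\pi_n$ then shows that $\pi_n$ restricts to a surjection ${\rm Div}_\Lambda^0(X_\infty) \twoheadrightarrow {\rm Div}_p^0(X_n)$ with the same kernel $\omega_n {\rm Div}_\Lambda(X_\infty)$. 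Since ${\rm Pr}_p(X_n) \subseteq {\rm Div}_p^0(X_n)$, the computation of the preimage from the previous paragraph now takes place entirely inside ${\rm Div}_\Lambda^0(X_\infty)$, yielding ${\rm Div}_\Lambda^0(X_\infty)/N_n \xrightarrow{\simeq} {\rm Pic}_p^0(X_n)$.

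The main obstacle is the bookkeeping in the middle step, namely tracking through the structure decompositions to confirm both that $\pi_n(P_{i,\infty})$ maps to the Corollary~\ref{generators1} generator and, most importantly, that the ``twist'' factor $\omega_{n, k_i}$ built into the definition of ${\rm Pr}_{\Lambda, n}^{ram}$ lines up with the norm element $\sum_{\tau \in I_{v_i, n}} \tau$ that appears naturally on the finite side. This is the conceptual core explaining why $N_n$ is defined the way it is; the surrounding diagram chases and snake-lemma step are routine once this alignment is in place.
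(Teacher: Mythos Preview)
Your proposal is correct and follows essentially the same approach as the paper: both compute $\pi_n(P_{i,\infty})$ and $\pi_n(\omega_{n,k_i}P_{i,\infty})$, match them against the generators of ${\rm Pr}_p(X_n)$ supplied by Corollary~\ref{generators1}, and combine this with the kernel computation of Lemma~\ref{kernel1} to obtain $N_n = \pi_n^{-1}({\rm Pr}_p(X_n))$; the second isomorphism then follows from the containment $N_n \subseteq {\rm Div}_\Lambda^0(X_\infty)$, which you verify in the same way. Your snake-lemma phrasing for the degree-zero restriction is slightly more explicit than the paper's terse ``it suffices to notice,'' but the underlying argument is identical.
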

\begin{proof}
For each $n \ge 0$, let $P_{i,n} \in {\rm Div}_{p}(X_{n})$ be the divisor $P_{v_{i}}$ for the graph $X_{n}$ defined in \cref{generators1}.  Observe now that for $i=1,\ldots,r$, one has
$$\pi_{n}(P_{i,\infty}) = P_{i,n}, $$
whereas for $i=r+1,\ldots,s$, one has
\begin{equation*}
\begin{aligned}
\pi_{n}\left(\omega_{n,k_{i}}P_{i,\infty}\right) &=  \sum_{j=0}^{p^{n - k_{i}}-1}\gamma^{jp^{k_{i}}} \cdot P_{i,n} \\
&= \sum_{\sigma \in I_{i,n}} \sigma P_{i,n}.
\end{aligned}
\end{equation*}
\cref{generators1} and \cref{kernel1} imply the equality
\begin{equation} \label{equality}
\pi_{n}(N_{n}) = {\rm Pr}_{p}(X_{n}) 
\end{equation}
which implies also the inclusion $N_{n} \subseteq \pi_{n}^{-1}({\rm Pr}_{p}(X_{n}))$.  Conversely, if $D \in \pi_{n}^{-1}({\rm Pr}_{p}(X_{n}))$, then $\pi_{n}(D) \in {\rm Pr}_{p}(X_{n})$ so that by (\ref{equality}), one has $\pi_{n}(D) = \pi_{n}(D_{0})$ for some $D_{0} \in N_{n}$. \cref{kernel1} implies that $D - D_{0} \in \omega_{n}{\rm Div}_{\Lambda}(X_{\infty}) \subseteq N_{n}$ and this shows the equality
$$N_{n} = \pi_{n}^{-1}({\rm Pr}_{p}(X_{n})).$$
The first isomorphism then follows.  

For the second one, it suffices to notice that $\omega_{n} {\rm Div}_{\Lambda}(X_{\infty}) \subseteq {\rm Div}_{\Lambda}^{0}(X_{\infty})$, and that from the definition of $P_{i,\infty}$ above in (\ref{generators}), we also clearly have $P_{i,\infty} \in {\rm Div}_{\Lambda}^{0}(X_{\infty})$ for all $i=1,\ldots,s$.  Therefore, $N_{n} \subseteq {\rm Div}_{\Lambda}^{0}(X_{\infty})$ for all $n \ge 0$, and this concludes the proof.
\end{proof}
As a consequence, we obtain the following concrete descriptions of ${\rm Pic}_{\Lambda}$ and ${\rm Pic}_{\Lambda}^{0}$ that were defined above in (\ref{Iwa_module}) and (\ref{Iwa_module_zero}). 
\begin{corollary} \label{exp_desc}
With the same notation as above, one has
$${\rm Pic}_{\Lambda} \simeq {\rm Div}_{\Lambda}(X_{\infty})/{\rm Pr}_{\Lambda}^{unr} \text{ and } {\rm Pic}_{\Lambda}^{0} \simeq {\rm Div}_{\Lambda}^{0}(X_{\infty})/{\rm Pr}_{\Lambda}^{unr}. $$
\end{corollary}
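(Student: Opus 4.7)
The strategy is to pass Theorem \ref{finite_level} to the inverse limit. First, one checks that the filtration $\{N_n\}_{n \ge 0}$ is decreasing: the inclusion $\omega_{n+1}\Lambda \subseteq \omega_n\Lambda$ (since $\omega_{n+1} = \omega_n \cdot \omega_{n+1,n}$) and the factorization $\omega_{n+1,k_i} = \omega_{n,k_i} \cdot \omega_{n+1,n}$ together give $N_{n+1} \subseteq N_n$. Moreover, the transition map ${\rm Pic}_p(X_{n+1}) \twoheadrightarrow {\rm Pic}_p(X_n)$ corresponds under the isomorphism of Theorem \ref{finite_level} to the natural quotient ${\rm Div}_\Lambda(X_\infty)/N_{n+1} \twoheadrightarrow {\rm Div}_\Lambda(X_\infty)/N_n$, which follows from the compatibility $\pi_n = \pi_n^{n+1} \circ \pi_{n+1}$ of the projections appearing in (\ref{essential}). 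Since ${\rm Pr}_\Lambda^{unr} \subseteq N_n$ for every $n$, we may write ${\rm Div}_\Lambda(X_\infty)/N_n \simeq Q/\overline{N}_n$, where $Q := {\rm Div}_\Lambda(X_\infty)/{\rm Pr}_\Lambda^{unr}$ and $\overline{N}_n$ is the image of $N_n$ in $Q$, and take the inverse limit to obtain
\[
{\rm Pic}_\Lambda = \varprojlim_n {\rm Pic}_p(X_n) \simeq \varprojlim_n Q/\overline{N}_n.
\]

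The main step is then to show that the canonical map $Q \to \varprojlim_n Q/\overline{N}_n$ is an isomorphism. The module $Q$ is finitely generated over $\Lambda$ by (\ref{struc_mod_inf}), hence a compact Hausdorff topological $\Lambda$-module. Because $\omega_n {\rm Div}_\Lambda(X_\infty) \subseteq N_n$, we have $\omega_n Q \subseteq \overline{N}_n$ for every $n$; a straightforward computation with the binomial expansion of $(1+T)^{p^n}-1$ shows $\omega_n \in \mathfrak{m}^{n+1}$, so Krull's intersection theorem applied to the finitely generated $\Lambda$-module $Q$ gives
\[
\bigcap_{n \ge 0} \overline{N}_n \subseteq \bigcap_{n \ge 0} \omega_n Q \subseteq \bigcap_{n \ge 0} \mathfrak{m}^n Q = 0.
\]
This yields injectivity of the map to the inverse limit. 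Surjectivity is standard for compact modules: given a compatible sequence $(\overline{x}_n) \in \varprojlim Q/\overline{N}_n$ and any lifts $x_n \in Q$, the sets $x_n + \overline{N}_n$ form a decreasing chain of nonempty closed subsets of the compact space $Q$, so their intersection is nonempty and any element of it maps to $(\overline{x}_n)$.

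The degree-zero statement is proved by the same argument with ${\rm Div}_\Lambda(X_\infty)$ replaced by ${\rm Div}_\Lambda^0(X_\infty)$: the latter is a finitely generated $\Lambda$-module (as a $\Lambda$-submodule of a finitely generated module over the Noetherian ring $\Lambda$), it contains ${\rm Pr}_\Lambda^{unr}$ by construction, and Theorem \ref{finite_level} already provides ${\rm Pic}_p^0(X_n) \simeq {\rm Div}_\Lambda^0(X_\infty)/N_n$; the inclusion $\omega_n {\rm Div}_\Lambda^0(X_\infty) \subseteq N_n$ still holds, so the same Krull-intersection argument applies. The main obstacle is precisely the verification $\bigcap_n \overline{N}_n = 0$; once this is in hand, the rest reduces to soft topological facts about compact $\Lambda$-modules.
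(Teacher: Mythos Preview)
Your overall strategy matches the paper's: pass Theorem~\ref{finite_level} to the inverse limit, check compatibility of the transition maps, and identify the limit with $Q := {\rm Div}_\Lambda(X_\infty)/{\rm Pr}_\Lambda^{unr}$ by showing $\bigcap_n \overline{N}_n = 0$ and invoking compactness for surjectivity. However, your injectivity argument contains a reversed inclusion. You correctly observe that $\omega_n Q \subseteq \overline{N}_n$, but then display
\[
\bigcap_{n} \overline{N}_n \subseteq \bigcap_{n} \omega_n Q,
\]
which is the wrong direction: from $\omega_n Q \subseteq \overline{N}_n$ one only obtains $\bigcap_n \omega_n Q \subseteq \bigcap_n \overline{N}_n$, and this says nothing about the vanishing of $\bigcap_n \overline{N}_n$. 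Krull's intersection theorem is simply not applicable here, because $\overline{N}_n$ is strictly larger than $\omega_n Q$ whenever there is ramification.

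The missing content is precisely that the \emph{other} summands of $N_n$ also shrink. One needs $\omega_n {\rm Div}_\Lambda(X_\infty) + {\rm Pr}_{\Lambda,n}^{ram} \subseteq \mathfrak{m}^{c(n)} {\rm Div}_\Lambda(X_\infty)$ with $c(n) \to \infty$; this holds because $\omega_n \in \mathfrak{m}^{n+1}$ (as you note) and because $\omega_{n,k_i} \in \mathfrak{m}^{\,n-k_i}$ (for instance via the factorization $\omega_{n,k_i} = \prod_{j=k_i}^{n-1}\omega_{j+1,j}$, each factor lying in $\mathfrak{m}$), so the generators $\omega_{n,k_i}P_{i,\infty}$ of ${\rm Pr}_{\Lambda,n}^{ram}$ tend to $0$. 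The paper phrases this as a compactness argument: writing any $D \in \bigcap_n N_n$ as $D = D_n' + P_n$ with $D_n' \in \omega_n {\rm Div}_\Lambda(X_\infty) + {\rm Pr}_{\Lambda,n}^{ram}$ and $P_n \in {\rm Pr}_\Lambda^{unr}$, one has $D_n' \to 0$, hence $P_n \to D$, and since ${\rm Pr}_\Lambda^{unr}$ is compact (hence closed) in ${\rm Div}_\Lambda(X_\infty)$, it follows that $D \in {\rm Pr}_\Lambda^{unr}$. Either route works once the ramified contribution is controlled, but your Krull argument as written does not address it.
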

\begin{proof}
As in \cref{finite_level}, we let
$$N_{n} = \omega_{n} {\rm Div}_{\Lambda}(X_{\infty}) + {\rm Pr}_{\Lambda,n}^{ram} + {\rm Pr}_{\Lambda}^{unr}$$
as long as $n \ge k = {\rm max}\{k_{i}: i = r+1,\ldots,s\}$.  Observe first that the commutativity of the diagram
\begin{equation*}
\begin{tikzcd}
{\rm Div}_{\Lambda}(X_{\infty}) \arrow[r,"\pi_{n+1}"] \arrow{dr}[swap]{\pi_{n}}  & {\rm Div}_{p}(X_{n+1}) \arrow[d] \\
& {\rm Div}_{p}(X_{n})
\end{tikzcd} 
\end{equation*}
induces yet another commutative diagram
\begin{equation*}
\begin{tikzcd}
{\rm Div}_{\Lambda}(X_{\infty})/N_{n+1} \arrow[r,"\simeq"] \arrow[d] & {\rm Pic}_{p}(X_{n+1}) \arrow[d] \\
{\rm Div}_{\Lambda}(X_{\infty})/N_{n} \arrow[r,"\simeq"] & {\rm Pic}_{p}(X_{n}),
\end{tikzcd}
\end{equation*}
where the left vertical arrow is the natural map induced from the inclusion $N_{n+1} \subseteq N_{n}$, and the two horizontal isomorphisms are given by \cref{finite_level}.  It follows that we have
$${\rm Pic}_{\Lambda} \simeq \varprojlim_{n \ge k} {\rm Div}_{\Lambda}(X_{\infty})/N_{n}. $$
Since ${\rm Pr}_{\Lambda}^{unr} \subseteq N_{n}$, we have natural maps
$${\rm Div}_{\Lambda}(X_{\infty})/{\rm Pr}_{\Lambda}^{unr} \rightarrow {\rm Div}_{\Lambda}(X_{\infty})/N_{n} $$
that induce a $\Lambda$-module morphism
\begin{equation} \label{concrete_iso}
{\rm Div}_{\Lambda}(X_{\infty})/{\rm Pr}_{\Lambda}^{unr} \rightarrow \varprojlim_{n \ge k} {\rm Div}_{\Lambda}(X_{\infty})/N_{n}, 
\end{equation}
and it remains to show that this last morphism is in fact an isomorphism.  Observe that 
$$\bigcap_{n \ge k}N_{n} = {\rm Pr}_{\Lambda}^{unr}. $$
Indeed, the inclusion ${\rm Pic}_{\Lambda}^{unr} \subseteq \bigcap_{n \ge k}N_{n}$ is clear by definition, and if $D \in \bigcap_{n \ge k}N_{n}$, then for each $n \ge k$, we have $D = D_{n}' + P_{n}$ for some $D_{n}' \in \omega_{n} \cdot {\rm Div}_{\Lambda}(X_{\infty}) + {\rm Pr}_{\Lambda,n}^{ram}$ and some $P_{n} \in {\rm Pr}_{\Lambda}^{unr}$.  Since $D_{n}' \to 0$ as $n \to \infty$, we have $P_{n} \to D$ as $n \to \infty$.  But since ${\rm Pr}_{\Lambda}^{unr}$ is finitely generated over $\Lambda$, it is compact.  It follows that $D \in {\rm Pr}_{\Lambda}^{unr}$, and this shows that (\ref{concrete_iso}) is injective.  The surjectivity of (\ref{concrete_iso}) follows from a standard result about projective limits of compact modules (see for instance \cite[Chapter IV, Proposition 2.7]{Neukirch:1999}).  This ends the proof that
$${\rm Pic}_{\Lambda} \simeq {\rm Div}_{\Lambda}(X_{\infty})/{\rm Pr}_{\Lambda}^{unr},$$
and the second isomorphism is proven similarly.

\end{proof}
Since ${\rm Div}_{\Lambda}(X_{\infty})$ is a finitely generated $\Lambda$-module, then so is ${\rm Pic}_{\Lambda}$ by \cref{exp_desc}.  Moreover, since $\Lambda$ is a noetherian ring, ${\rm Div}_{\Lambda}^{0}(X_{\infty}) = {\rm ker}(s_{\Lambda})$ is a finitely generated $\Lambda$-module as well, and it follows from \cref{exp_desc} that ${\rm Pic}_{\Lambda}^{0}$ is also a finitely generated $\Lambda$-module.

%\subsection{The analogue of Iwasawa's asymptotic class number formula}
\subsection{The analogue of Iwasawa's asymptotic class number formula} \label{asymptotic1}
We can now go ahead and prove our first main theorem (\cref{main1} from \cref{Introduction}) of this paper.
\begin{theorem} \label{analogue_iwasawa}
Let $X = (V_{X},\mathbf{E}_{X})$ be a finite connected graph, and let $\alpha:\mathbf{E}_{X} \rightarrow \Gamma$ be a voltage assignment.  Consider a family $\mathcal{I}$ of closed subgroups of $\Gamma$ indexed by the vertices of $V_{X}$ and consider the branched $\mathbb{Z}_{p}$-tower of graphs
$$X = X_{0} \leftarrow X_{1} \leftarrow X_{2} \leftarrow \ldots \leftarrow X_{n} \leftarrow \ldots,$$
where $X_{n} = X(\Gamma_{n},\mathcal{I}_{n},\alpha_{n})$ as explained in \cref{towers}.  Assume that all finite graphs $X_{n}$ are connected.  Then ${\rm Pic}_{\Lambda}^{0}$ is a finitely generated torsion $\Lambda$-module.  Moreover, if we let
$$\mu = \mu({\rm Pic}_{\Lambda}^{0}) \text{ and } \lambda = \lambda({\rm Pic}_{\Lambda}^{0}), $$
then there exist $n_{0}\in \mathbb{Z}_{\ge 0}$ and $\nu \in \mathbb{Z}$ such that
$${\rm ord}_{p}(\kappa(X_{n})) = \mu p^{n} + \lambda n + \nu, $$
when $n \ge n_{0}$.
\end{theorem}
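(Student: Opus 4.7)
The plan is to realize $\mathrm{Pic}^{0}_{\Lambda}$ as a quotient of a finitely generated $\Lambda$-module, to compare its $\omega_{n,k}$-coinvariants with $\mathrm{Pic}^{0}_{p}(X_{n})$ by way of the surjection furnished by \cref{finite_level}, and finally to apply \cref{washington}. Finite generation is immediate: by \cref{exp_desc} the module $\mathrm{Pic}^{0}_{\Lambda}$ is a quotient of $\mathrm{Div}^{0}_{\Lambda}(X_{\infty}) = \ker(s_{\Lambda})$, which is a submodule of the finitely generated $\Lambda$-module $\mathrm{Div}_{\Lambda}(X_{\infty}) \simeq \Lambda^{r} \oplus \bigoplus_{i>r} \mathbb{Z}_{p}[\Gamma_{k_{i}}]$ appearing in (\ref{struc_mod_inf}); Noetherianity of $\Lambda$ gives the claim.

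Write $M = \mathrm{Pic}^{0}_{\Lambda}$, set $k = \max\{k_{i} : r < i \le s\}$, and for $n \ge k$ put $A_{n} = \omega_{n,k}\mathrm{Div}^{0}_{\Lambda}(X_{\infty}) + \mathrm{Pr}^{unr}_{\Lambda}$. Using the factorizations $\omega_{n} = \omega_{n,k}\omega_{k}$ and $\omega_{n,k_{i}} = \omega_{n,k}\omega_{k,k_{i}}$ together with $\omega_{k}\mathrm{Div}_{\Lambda}(X_{\infty}) \subseteq \mathrm{Div}^{0}_{\Lambda}(X_{\infty})$ and $P_{i,\infty} \in \mathrm{Div}^{0}_{\Lambda}(X_{\infty})$, one checks $N_{n} \subseteq A_{n}$, where $N_{n}$ is the module of \cref{finite_level}. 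Since $M/\omega_{n,k}M = \mathrm{Div}^{0}_{\Lambda}(X_{\infty})/A_{n}$, this gives a canonical surjection
$$\mathrm{Pic}^{0}_{p}(X_{n}) \twoheadrightarrow M/\omega_{n,k}M$$
with kernel $A_{n}/N_{n}$. In particular $M/\omega_{n,k}M$ is finite, so \cref{washington} implies that $M$ is a torsion Iwasawa module and that there exist $n_{1}\in\mathbb{Z}_{\ge 0}$ and $\nu_{0}\in\mathbb{Z}$ with $\mathrm{ord}_{p}|M/\omega_{n,k}M| = \mu p^{n} + \lambda n + \nu_{0}$ for $n \ge n_{1}$.

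The remaining step---which I expect to be the main obstacle to execute cleanly---is to show that $\mathrm{ord}_{p}|A_{n}/N_{n}|$ is eventually constant. The idea is to factor $\omega_{n,k}$ out of $N_{n}$: one verifies $N_{n} = \omega_{n,k}\widetilde{N} + \mathrm{Pr}^{unr}_{\Lambda}$ where
$$\widetilde{N} = \omega_{k}\mathrm{Div}_{\Lambda}(X_{\infty}) + \mathrm{span}_{\Lambda}\bigl\{\omega_{k,k_{i}}P_{i,\infty} : r < i \le s\bigr\}$$
is a \emph{fixed} submodule of $\mathrm{Div}^{0}_{\Lambda}(X_{\infty})$. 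Since $\omega_{k,k}=1$, specializing \cref{finite_level} at $n=k$ identifies the image $\overline{\widetilde{N}}$ of $\widetilde{N}$ in $M$ with the kernel of $M \twoheadrightarrow \mathrm{Pic}^{0}_{p}(X_{k})$, so $M/\overline{\widetilde{N}} \simeq \mathrm{Pic}^{0}_{p}(X_{k})$ is finite. Multiplication by $\omega_{n,k}$ yields a surjection $M/\overline{\widetilde{N}} \twoheadrightarrow \omega_{n,k}M/\omega_{n,k}\overline{\widetilde{N}} = A_{n}/N_{n}$ with kernel $(\overline{\widetilde{N}} + M[\omega_{n,k}])/\overline{\widetilde{N}}$; because $\omega_{n,k}$ divides $\omega_{n+1,k}$, these kernels form an ascending chain inside the finite group $M/\overline{\widetilde{N}}$ and hence stabilize. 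Thus $|A_{n}/N_{n}|$ is eventually constant, equal to some $p^{c}$, and combining everything gives $\mathrm{ord}_{p}(\kappa(X_{n})) = \mathrm{ord}_{p}|\mathrm{Pic}^{0}_{p}(X_{n})| = \mu p^{n} + \lambda n + \nu$ for $n$ sufficiently large, with $\nu = \nu_{0} + c$.
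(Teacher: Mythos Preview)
Your proof is correct and ultimately rests on the same structural observation as the paper's: both arguments identify the fixed submodule $\widetilde{N}$ (which the paper denotes $N$) and use the factorization $N_{n} = \omega_{n,k}\widetilde{N} + \mathrm{Pr}_{\Lambda}^{unr}$. The difference lies in how the quantity $|\mathrm{Div}^{0}_{\Lambda}(X_{\infty})/N_{n}| = |M/\omega_{n,k}\overline{\widetilde{N}}|$ is split. Writing $A = \overline{\widetilde{N}}$ for the image of $\widetilde{N}$ in $M$, the paper factors it as $|A/\omega_{n,k}A|\cdot|M/A|$: here $M/A \simeq \mathrm{Pic}^{0}_{p}(X_{k})$ is manifestly constant, \cref{washington} is applied to the submodule $A$, and the identities $\mu(A)=\mu(M)$, $\lambda(A)=\lambda(M)$ follow at the end from the pseudo-isomorphism $A\sim M$. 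You instead factor it as $|M/\omega_{n,k}M|\cdot|\omega_{n,k}M/\omega_{n,k}A|$ and apply \cref{washington} directly to $M$, which has the advantage of yielding the invariants of $\mathrm{Pic}^{0}_{\Lambda}$ without invoking pseudo-isomorphism, but requires the extra stabilization argument for $|A_{n}/N_{n}|$. Your stabilization argument is sound, and in fact your surjection $M/A \twoheadrightarrow A_{n}/N_{n}$ recovers exactly the paper's finite quotient $M/A$ as the controlling object; so the two routes are really two decompositions of the same filtration $\omega_{n,k}A \subseteq \omega_{n,k}M \subseteq A \subseteq M$.
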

\begin{proof}
We keep the same notation as above.  As in the proof of \cref{exp_desc}, set $k = {\rm max}\{k_{i}: i=r+1,\ldots,s \}$, and define the $\Lambda$-modules
$$M = {\rm Div}_{\Lambda}^{0}(X_{\infty})/{\rm Pr}_{\Lambda}^{unr} \text{ and } A = N/{\rm Pr}_{\Lambda}^{unr}, $$
where 
$$N = \omega_{k} {\rm Div}_{\Lambda}(X_{\infty}) + \frac{\omega_{k}}{\omega_{n}} {\rm Pr}_{\Lambda,n}^{ram} + {\rm Pr}_{\Lambda}^{unr}. $$
Note that $M \simeq {\rm Pic}_{\Lambda}^{0}$ by \cref{exp_desc}, and that $N$ does not depend on $n$.  \cref{finite_level} implies that 
$${\rm Pic}_{p}^{0}(X_{n}) \simeq M/ \omega_{n,k} A, $$
where the elements $\omega_{n,k} \in R$ were defined in \cref{Iwasawa_algebra}.  It follows that we have a short exact sequence
\begin{equation} \label{nice_ses}
0 \rightarrow A/\omega_{n,k}A \rightarrow {\rm Pic}_{p}^{0}(X_{n}) \rightarrow M/A \rightarrow 0
\end{equation}
of $\Lambda$-modules.  Since ${\rm Pic}_{p}^{0}(X_{n}) \simeq {\rm Pic}^{0}(X_{n})[p^{\infty}]$ is finite, then so are $A/\omega_{n,k}A$ and $M/A$.  By definition, $A$ is a finitely generated $\Lambda$-module.  It follows from \cref{washington} that $A$ is torsion and that there exist $n_{0} \in \mathbb{Z}_{\ge 0}$ and $\nu' \in \mathbb{Z}$ such that when $n \ge n_{0}$, one has
$${\rm ord}_{p}(|A/\omega_{n,k}A|) = \mu(A) p^{n} + \lambda(A) n + \nu'.$$
Since $M/A \simeq {\rm Div}_{\Lambda}^{0}(X_{\infty})/N$ does not depend on $n$, we have $|M/A| = p^{k'}$ for some $k' \ge 0$.  Letting $\nu = \nu' + k'$, the short exact sequence (\ref{nice_ses}) gives
$${\rm ord}_{p}(\kappa(X_{n})) = \mu(A) p^{n} + \lambda(A) n + \nu, $$
when $n$ is large.  The $\Lambda$-modules $M$ and $A$ are related to one another via the short exact sequence
$$0 \rightarrow A \rightarrow M \rightarrow M/A \rightarrow 0, $$
and since $M/A$ is finite, $A$ and $M$ are pseudo-isomorphic.  It follows that $M \simeq {\rm Pic}_{\Lambda}^{0}$ is also a finitely generated torsion $\Lambda$-module and
$$\mu(A) = \mu(M) \text{ and } \lambda(A) = \lambda(M). $$
\end{proof}
\begin{remark}
In the situation where $V^{ram} = \varnothing$, note that we can take $k=0$ and the short exact sequence (\ref{nice_ses}) becomes
$$0 \rightarrow A/\omega_{n,0}A \rightarrow {\rm Pic}_{p}^{0}(X_{n}) \rightarrow {\rm Pic}_{p}^{0}(X) \rightarrow 0$$
for all $n \ge 0$.  This corresponds to the situation originally studied from the module theoretical point of view in \cite{Gonet:2022}, \cite{Kleine/Muller:2023}, and \cite{Kataoka:2024}.
\end{remark}

%\subsection{The analogue of Iwasawa's main conjecture}
\subsection{The analogue of Iwasawa's main conjecture} \label{main_conj1}
Let us start with the following proposition.
\begin{proposition} \label{pic_vs_zero}
Let $X = (V_{X},\mathbf{E}_{X})$ be a finite connected graph, and let $\alpha:\mathbf{E}_{X} \rightarrow \Gamma$ be a voltage assignment.  Consider a family $\mathcal{I}$ of closed subgroups of $\Gamma$ indexed by the vertices of $V_{X}$ and consider the branched $\mathbb{Z}_{p}$-tower of graphs
$$X = X_{0} \leftarrow X_{1} \leftarrow X_{2} \leftarrow \ldots \leftarrow X_{n} \leftarrow \ldots,$$
where $X_{n} = X(\Gamma_{n},\mathcal{I}_{n},\alpha_{n})$ as explained in \cref{towers}.  Assume that all finite graphs $X_{n}$ are connected.  Then, one has a short exact sequence of Iwasawa modules
$$0 \rightarrow {\rm Pic}_{\Lambda}^{0} \rightarrow {\rm Pic}_{\Lambda} \rightarrow \mathbb{Z}_{p} \rightarrow 0, $$
where $\mathbb{Z}_{p}$ is viewed as a $\Lambda$-module with trivial action.  Moreover, ${\rm Pic}_{\Lambda}$ is a torsion Iwasawa module, and one has
$${\rm char}_{\Lambda}({\rm Pic}_{\Lambda}^{0}) \cdot \omega_{0}\Lambda = {\rm char}_{\Lambda}({\rm Pic}_{\Lambda}). $$
\end{proposition}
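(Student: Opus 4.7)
The plan is to assemble the short exact sequence directly from the identifications supplied by \cref{exp_desc} together with the ambient short exact sequence (\ref{useful_ses}), and then read off the characteristic ideal identity from multiplicativity in short exact sequences.

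More precisely, \cref{exp_desc} gives us the explicit descriptions
$${\rm Pic}_{\Lambda} \simeq {\rm Div}_{\Lambda}(X_{\infty})/{\rm Pr}_{\Lambda}^{unr} \quad \text{and} \quad {\rm Pic}_{\Lambda}^{0} \simeq {\rm Div}_{\Lambda}^{0}(X_{\infty})/{\rm Pr}_{\Lambda}^{unr},$$
and since ${\rm Pr}_{\Lambda}^{unr}$ is the same $\Lambda$-submodule in both quotients, the first step is simply to quotient the short exact sequence (\ref{useful_ses}) by ${\rm Pr}_{\Lambda}^{unr}$ (which lies in ${\rm Div}_{\Lambda}^{0}(X_{\infty})$ since each generator $P_{i,\infty}$ was observed in the proof of \cref{finite_level} to lie in ${\rm Div}_{\Lambda}^{0}(X_{\infty})$). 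A short snake lemma argument, or equivalently the third isomorphism theorem, then yields the short exact sequence
$$0 \rightarrow {\rm Pic}_{\Lambda}^{0} \rightarrow {\rm Pic}_{\Lambda} \rightarrow \mathbb{Z}_{p} \rightarrow 0$$
of $\Lambda$-modules, with $\mathbb{Z}_{p}$ carrying the trivial $\Lambda$-action (as in (\ref{useful_ses})).

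Next, I would invoke \cref{analogue_iwasawa} to know that ${\rm Pic}_{\Lambda}^{0}$ is a finitely generated torsion $\Lambda$-module. The trivial $\Lambda$-module $\mathbb{Z}_{p}$ is cyclic and annihilated by $\omega_{0}$ (see (\ref{trivial_module})), hence it is itself a torsion Iwasawa module; moreover the isomorphism $\mathbb{Z}_{p}\simeq \Lambda/\omega_{0}\Lambda$ exhibits it in elementary form, so by (\ref{char_id}) we have ${\rm char}_{\Lambda}(\mathbb{Z}_{p})=\omega_{0}\Lambda$. Being an extension of two torsion Iwasawa modules, ${\rm Pic}_{\Lambda}$ is therefore also a torsion Iwasawa module.

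Finally, the multiplicativity of the characteristic ideal on short exact sequences of torsion Iwasawa modules, namely (\ref{ideal_mult}), applied to the short exact sequence just constructed, immediately gives
$${\rm char}_{\Lambda}({\rm Pic}_{\Lambda}) = {\rm char}_{\Lambda}({\rm Pic}_{\Lambda}^{0}) \cdot {\rm char}_{\Lambda}(\mathbb{Z}_{p}) = {\rm char}_{\Lambda}({\rm Pic}_{\Lambda}^{0}) \cdot \omega_{0}\Lambda,$$
as required. There is no real obstacle here; the only point worth double-checking is the inclusion ${\rm Pr}_{\Lambda}^{unr}\subseteq {\rm Div}_{\Lambda}^{0}(X_{\infty})$ and the identification of the quotient ${\rm Div}_{\Lambda}(X_{\infty})/{\rm Div}_{\Lambda}^{0}(X_{\infty})$ with $\mathbb{Z}_{p}$ on which $\Lambda$ acts trivially, both of which are immediate from (\ref{useful_ses}) and the definition of $s_{\Lambda}$.
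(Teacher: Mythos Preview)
Your proposal is correct and follows essentially the same route as the paper: quotient the short exact sequence (\ref{useful_ses}) by ${\rm Pr}_{\Lambda}^{unr}$, invoke \cref{exp_desc} to identify the resulting terms, use that ${\rm Pic}_{\Lambda}^{0}$ and $\mathbb{Z}_{p}$ are torsion to conclude ${\rm Pic}_{\Lambda}$ is torsion, and then apply (\ref{ideal_mult}) together with ${\rm char}_{\Lambda}(\mathbb{Z}_{p})=\omega_{0}\Lambda$. The only difference is that you spell out a couple of justifications (the inclusion ${\rm Pr}_{\Lambda}^{unr}\subseteq {\rm Div}_{\Lambda}^{0}(X_{\infty})$ and the explicit appeal to \cref{analogue_iwasawa}) that the paper leaves implicit.
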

\begin{proof}
The short exact sequence (\ref{useful_ses}) induces a short exact sequence
$$0 \longrightarrow {\rm Div}_{\Lambda}^{0}/{\rm Pr}_{\Lambda}^{unr} \longrightarrow {\rm Div}_{\Lambda}/{\rm Pr}_{\Lambda}^{unr} \stackrel{s_{\Lambda}}{\longrightarrow} \mathbb{Z}_{p} \longrightarrow 0, $$
which combined with \cref{exp_desc} gives the desired short exact sequence.  Since both $\mathbb{Z}_{p}$ and ${\rm Pic}_{\Lambda}^{0}$ are torsion Iwasawa modules, it follows that ${\rm Pic}_{\Lambda}$ is also a torsion Iwasawa module.  Moreover, since ${\rm char}_{\Lambda}(\mathbb{Z}_{p}) = \omega_{0}\Lambda$, (\ref{ideal_mult}) implies that we have the equality of ideals
$${\rm char}_{\Lambda}({\rm Pic}_{\Lambda}^{0}) \cdot \omega_{0}\Lambda = {\rm char}_{\Lambda}({\rm Pic}_{\Lambda}). $$
\end{proof}
It follows from \cref{pic_vs_zero} that in order to understand ${\rm char}_{\Lambda}({\rm Pic}_{\Lambda}^{0})$, it suffices to understand ${\rm char}_{\Lambda}({\rm Pic}_{\Lambda})$.  In order to do so, consider $X_{\infty}^{unr} = X(\Gamma,\alpha)$ as we did at the end of \cref{connectedness}.  Let 
$$M_{1} = \bigoplus_{i=1}^{r} \Lambda \cdot w_{i,\infty}^{unr} \text{ and } M_{2} = \bigoplus_{i=r+1}^{s} \Lambda \cdot w_{i,\infty}^{unr},$$
where $w_{i,\infty}^{unr} = (v_{i},1_{\Gamma})$ for $i=1,\ldots,s$.  Note that we have ${\rm Div}_{\Lambda}(X_{\infty}^{unr}) = M_{1} \oplus M_{2}$.  We define an operator
\begin{equation} \label{oper_delta}
\Delta:{\rm Div}_{\Lambda}(X_{\infty}^{unr}) \rightarrow {\rm Div}_{\Lambda}(X_{\infty}^{unr}) 
\end{equation}
as follows:  If $i=1,\ldots,r$, then 
$$\Delta(w_{i,\infty}^{unr}) = \mathcal{L}^{unr}_{\Lambda}(w_{i,\infty}^{unr}),$$ 
where $\mathcal{L}^{unr}_{\Lambda}$ is the Laplacian operator on the locally finite graph $X_{\infty}^{unr}$, and if $i=r+1,\ldots,s$, then
$$\Delta(w_{i,\infty}^{unr}) = \omega_{k_{i}} \cdot w_{i,\infty}^{unr}. $$
Since ${\rm Div}_{\Lambda}(X_{\infty}^{unr})$ is a free $\Lambda$-module, it makes sense to talk about ${\rm det}(\Delta) \in \Lambda$.  We are now ready to prove our second main theorem (\cref{main3} from \cref{Introduction}) of this paper.  (See also \cref{main_conj_power_sr} below.)
\begin{theorem} \label{main_conj}
Let $X = (V_{X},\mathbf{E}_{X})$ be a finite connected graph, and let $\alpha:\mathbf{E}_{X} \rightarrow \Gamma$ be a voltage assignment.  Consider a family $\mathcal{I}$ of closed subgroups of $\Gamma$ indexed by the vertices of $V_{X}$ and consider the branched $\mathbb{Z}_{p}$-tower of graphs
$$X = X_{0} \leftarrow X_{1} \leftarrow X_{2} \leftarrow \ldots \leftarrow X_{n} \leftarrow \ldots,$$
where $X_{n} = X(\Gamma_{n},\mathcal{I}_{n},\alpha_{n})$ as explained in \cref{towers}.  Assume that all finite graphs $X_{n}$ are connected.  With the notation as above, one has
$${\rm char}_{\Lambda}({\rm Pic}_{\Lambda}) = ({\rm det}(\Delta)), $$
where $\Delta$ is the operator defined above in (\ref{oper_delta}).  Moreover, one has
$${\rm char}_{\Lambda}({\rm Pic}_{\Lambda}^{0}) \cdot \omega_{0}\Lambda = ({\rm det}(\Delta)). $$
\end{theorem}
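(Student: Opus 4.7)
The plan is to invoke \cref{char_ideal_calc} by constructing a short exact sequence
$$0 \longrightarrow F \stackrel{\Delta}{\longrightarrow} F \longrightarrow {\rm Pic}_{\Lambda} \longrightarrow 0$$
of $\Lambda$-modules in which $F = {\rm Div}_{\Lambda}(X_{\infty}^{unr}) = \bigoplus_{i=1}^{s} \Lambda \cdot w_{i,\infty}^{unr}$ is a free module of rank $s = |V_{X}|$ and $\Delta$ is the operator defined in (\ref{oper_delta}). The second equation of the theorem then follows at once from \cref{pic_vs_zero} combined with the equality of ideals ${\rm char}_{\Lambda}(\mathbb{Z}_{p}) = \omega_{0}\Lambda$.

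The main building block is the natural surjection $\pi: F \twoheadrightarrow {\rm Div}_{\Lambda}(X_{\infty})$ sending $w_{i,\infty}^{unr} \mapsto w_{i,\infty}$, namely the $\Lambda$-module map induced by the immersion $\iota: X_{\infty}^{unr} \to X_{\infty}$ of \cref{immersions}. By (\ref{struc_mod_inf}) one has
$$\ker(\pi) = \bigoplus_{i=r+1}^{s} \omega_{k_{i}} \Lambda \cdot w_{i,\infty}^{unr}.$$
Composing $\pi$ with the quotient map afforded by \cref{exp_desc} yields a surjection $\Phi: F \twoheadrightarrow {\rm Pic}_{\Lambda}$, and the crux of the argument is to show that $\ker(\Phi) = \Delta(F)$. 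A direct computation from the voltage description of $X_{\infty}^{unr}$ gives
$$\pi\bigl(\mathcal{L}^{unr}_{\Lambda}(w_{i,\infty}^{unr})\bigr) = {\rm val}_{X}(v_{i})\, w_{i,\infty} - \sum_{e \in \mathbf{E}_{X,v_{i}}}(t(e),\alpha(e) I_{t(e)}) = P_{i,\infty}$$
for every $i=1,\ldots,s$. Hence for $i \le r$ the element $\pi(\Delta(w_{i,\infty}^{unr})) = P_{i,\infty}$ is a generator of ${\rm Pr}_{\Lambda}^{unr}$, while for $i > r$ one has $\pi(\Delta(w_{i,\infty}^{unr})) = \pi(\omega_{k_{i}} w_{i,\infty}^{unr}) = 0$; both cases show $\Delta(F) \subseteq \ker(\Phi)$. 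Conversely, if $x \in \ker(\Phi)$, then $\pi(x) = \sum_{i=1}^{r} \lambda_{i} P_{i,\infty}$ for some $\lambda_{i} \in \Lambda$; subtracting $\sum_{i=1}^{r} \lambda_{i} \mathcal{L}^{unr}_{\Lambda}(w_{i,\infty}^{unr}) \in \Delta(F)$ from $x$ produces an element of $\ker(\pi)$, which is already contained in $\Delta(F)$. This identifies $F/\Delta(F) \simeq {\rm Pic}_{\Lambda}$.

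To apply \cref{char_ideal_calc} I must finally check that $\Delta$ is injective. Since $\Lambda$ is an integral domain and $F$ is free, this is equivalent to ${\rm det}(\Delta) \neq 0$, which is in turn equivalent to $F/\Delta(F) \simeq {\rm Pic}_{\Lambda}$ being a torsion $\Lambda$-module. The latter was already established in \cref{pic_vs_zero}, so injectivity comes for free. The most delicate step of the proof is therefore the kernel computation $\ker(\Phi) = \Delta(F)$ in the mixed ramified/unramified setting: one has to coordinate the lifting of a relation in ${\rm Pic}_{\Lambda}$ all the way up to $F$, simultaneously accounting for the torsion relations $\omega_{k_{i}} w_{i,\infty}^{unr} \in \ker(\pi)$ and the unramified principal divisors $P_{i,\infty}$, while keeping in mind that $\mathcal{L}^{unr}_{\Lambda}$ may a priori connect unramified to ramified vertices and so introduces off-diagonal terms into the matrix of $\Delta$.
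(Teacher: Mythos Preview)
Your proof is correct and follows essentially the same route as the paper's: build the surjection $F \twoheadrightarrow {\rm Div}_{\Lambda}(X_{\infty})/{\rm Pr}_{\Lambda}^{unr} \simeq {\rm Pic}_{\Lambda}$ via $\iota_{*}$ and \cref{exp_desc}, identify its kernel with $\Delta(F)$, and then use that ${\rm Pic}_{\Lambda}$ is torsion to get injectivity of $\Delta$. The only cosmetic differences are that you spell out the kernel computation $\ker(\Phi)=\Delta(F)$ in full (the paper compresses this into ``Therefore, we have an exact sequence\ldots''), and you argue injectivity directly from $\det(\Delta)\neq 0$ over the domain $\Lambda$ rather than citing \cite[Lemma~A.3]{Kataoka:2024}.
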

\begin{proof}
Consider the natural surjective morphism 
$\iota_{*}:{\rm Div}_{\Lambda}(X_{\infty}^{unr}) \rightarrow {\rm Div}_{\Lambda}(X_{\infty})$
of $\Lambda$-modules induced by the immersion $\iota: X_{\infty}^{unr} \rightarrow X_{\infty}$.  It follows from the explicit description (\ref{struc_mod_inf}) of the $\Lambda$-module structure of ${\rm Div}_{\Lambda}(X_{\infty})$ that 
$${\rm ker}(\iota_{*}) = \bigoplus_{i=r+1}^{s} \omega_{k_{i}}\Lambda \cdot w_{i,\infty}^{unr}. $$
Therefore, we have an exact sequence of $\Lambda$-modules
$${\rm Div}_{\Lambda}(X_{\infty}^{unr}) \stackrel{\Delta}{\longrightarrow} {\rm Div}_{\Lambda}(X_{\infty}^{unr}) \longrightarrow {\rm Div}_{\Lambda}(X_{\infty})/{\rm Pr}_{\Lambda}^{unr} \longrightarrow 0, $$
where the map on the right is $\iota_{*}$ followed by the natural projection map ${\rm Div}_{\Lambda}(X_{\infty}) \rightarrow {\rm Div}_{\Lambda}(X_{\infty})/{\rm Pr}_{\Lambda}^{unr}$.  By \cref{exp_desc}, we have ${\rm Div}_{\Lambda}(X_{\infty})/{\rm Pr}_{\Lambda}^{unr} \simeq {\rm Pic}_{\Lambda}$.  Moreover, since ${\rm Pic}_{\Lambda}$ is a torsion $\Lambda$-module by \cref{pic_vs_zero} and ${\rm Div}_{\Lambda}(X_{\infty}^{unr})$ is free of finite rank as a $\Lambda$-module, \cite[Lemma A.3]{Kataoka:2024} implies that $\Delta$ is injective so that we have a short exact sequence
$$0 \longrightarrow {\rm Div}_{\Lambda}(X_{\infty}^{unr}) \stackrel{\Delta}{\longrightarrow} {\rm Div}_{\Lambda}(X_{\infty}^{unr}) \longrightarrow {\rm Pic}_{\Lambda} \longrightarrow 0. $$
By \cref{char_ideal_calc}, we have
$${\rm char}_{\Lambda}({\rm Pic}_{\Lambda}) = ({\rm det}(\Delta)). $$
The second equality follows directly from \cref{pic_vs_zero}.
\end{proof}
\begin{remark}
When $V^{ram} = \varnothing$, \cref{main_conj} reduces to \cite[Theorem 5.2]{Kleine/Muller:2023} in the situation where $l=1$ in their notation.  In this case, we have $X_{n} = X_{n}^{unr}$ for all $n \ge 0$, and $\Delta$ is the Laplacian operator $\mathcal{L}_{\Lambda}^{unr}$ on the locally finite graph $X_{\infty} = X_{\infty}^{unr}$.
\end{remark}

%\section{Examples}
\section{Examples} \label{examples}
In order to provide numerical examples, it is convenient to work with $\mathbb{Z}_{p}\llbracket T \rrbracket$ rather than $\Lambda$.  So we now reexpress the result of \cref{main_conj} using the non-canonical isomorphism (\ref{non_can}).  Moreover, we prefer to work additively, so we work directly with $\mathbb{Z}_{p}$ rather than $\Gamma$.  We will make use of the group morphism
\begin{equation} \label{useful_gr_mor}
\rho: \mathbb{Z}_{p} \rightarrow \mathbb{Z}_{p}\llbracket T\rrbracket^{\times} 
\end{equation}
given by 
$$a \mapsto \rho(a) = (1 + T)^{a} = \sum_{i=0}^{\infty}\binom{a}{i} T^{i} \in \mathbb{Z}_{p}\llbracket T \rrbracket ^{\times},$$
where the $\binom{X}{i}$ are the usual integer-valued polynomials given by $\binom{X}{0} = 1$, $\binom{X}{1} = X$, and
$$\binom{X}{i} = \frac{X(X-1)\cdots (X - i + 1)}{i!} \in \mathbb{Q}[X] $$
for $i \ge 2$.  Note that the group morphism $\rho$ corresponds to $\Gamma \hookrightarrow R^{\times} \hookrightarrow \Lambda^{\times}$.  Starting with a voltage assignment $\alpha: \mathbf{E}_{X} \rightarrow \mathbb{Z}_{p}$, we define a few matrices as follows.  First, we let $D = (d_{ij})$ be the diagonal matrix defined via
\begin{equation*}
d_{ii} = 
\begin{cases}
{\rm val}_{X}(v_{i}), &\text{ if } i=1,\ldots,r;\\
0, & \text{ otherwise. }
\end{cases}
\end{equation*}
Then, we define another matrix 
$$B(T) = (b_{ij}(T)) \in M_{s \times s}(\mathbb{Z}_{p}\llbracket T \rrbracket)$$
as follows:
\begin{enumerate}
\item If $i=1,\ldots,s$, and $j=1,\ldots,r$, then let
$$b_{ij}(T) = \sum_{\substack{e \in \mathbf{E}_{X} \\ {\rm inc}(e) = (v_{j},v_{i})}} \rho(\alpha(e)), $$
where $\rho$ is the group morphism (\ref{useful_gr_mor}) above.
\item If $i = j = r+1,\ldots,s$, then let
$$b_{ij}(T) = -\omega_{k_{i}}(T), $$
where $\omega_{k_{i}}(T) = (1+T)^{p^{k_{i}}} - 1$ is the distinguished polynomial in $\mathbb{Z}_{p}\llbracket T \rrbracket$ corresponding to $\omega_{k_{i}}$ via the isomorphism (\ref{non_can}). 
\item Set $b_{ij}(T) = 0$ otherwise.  
\end{enumerate}
A simple calculation based on the definition (\ref{oper_delta}) of the operator $\Delta$ shows that 
$${\rm det}(\Delta) = {\rm det}(D - B(T)), $$
when $\Delta$ is viewed as a morphism of $\mathbb{Z}_{p}\llbracket T \rrbracket$-modules.  We let
\begin{equation} \label{power_ser}
f_{X,\mathcal{I},\alpha}(T) = {\rm det}(D - B(T)) \in \mathbb{Z}_{p}\llbracket T \rrbracket.
\end{equation}
\cref{main_conj} can now be phrased as follows.
\begin{theorem} \label{main_conj_power_sr}
Let $X = (V_{X},\mathbf{E}_{X})$ be a finite connected graph, and let $\alpha:\mathbf{E}_{X} \rightarrow \mathbb{Z}_{p}$ be a voltage assignment.  Consider a family $\mathcal{I}$ of closed subgroups of $\mathbb{Z}_{p}$ indexed by the vertices of $V_{X}$ and consider the branched $\mathbb{Z}_{p}$-tower of graphs
$$X = X_{0} \leftarrow X_{1} \leftarrow X_{2} \leftarrow \ldots \leftarrow X_{n} \leftarrow \ldots,$$
where $X_{n} = X(\mathbb{Z}/p^{n}\mathbb{Z},\mathcal{I}_{n},\alpha_{n})$ as explained in \cref{towers}.  Assume that all finite graphs $X_{n}$ are connected.  Then, one has
$${\rm char}_{\mathbb{Z}_{p}\llbracket T \rrbracket}({\rm Pic}_{\Lambda}^{0}) \cdot (T) = (f_{X,\mathcal{I},\alpha}(T)), $$
where $f_{X,\mathcal{I},\alpha}(T)$ is the power series defined in (\ref{power_ser}) above.
\end{theorem}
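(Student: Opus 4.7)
The plan is to reduce \cref{main_conj_power_sr} to the already-established \cref{main_conj} by transporting everything through the non-canonical isomorphism $\Lambda \stackrel{\simeq}{\to} \mathbb{Z}_{p}\llbracket T \rrbracket$ sending $\gamma \mapsto 1+T$. Under this identification, the element $\omega_{0} = \gamma - 1$ maps to $T$, so the ideal $\omega_{0}\Lambda$ corresponds to $(T) \subseteq \mathbb{Z}_{p}\llbracket T \rrbracket$. Hence, the content of \cref{main_conj_power_sr} is precisely the assertion that $\det(\Delta)$, viewed in $\mathbb{Z}_{p}\llbracket T \rrbracket$, coincides with $f_{X,\mathcal{I},\alpha}(T) = \det(D - B(T))$. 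Thus everything is reduced to a matrix computation for the operator $\Delta$ defined in (\ref{oper_delta}).

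The second step is to fix the ordered $\Lambda$-basis $\{w_{1,\infty}^{unr}, \ldots, w_{s,\infty}^{unr}\}$ of the free module ${\rm Div}_{\Lambda}(X_{\infty}^{unr})$ and compute the matrix $M(T) \in M_{s\times s}(\mathbb{Z}_{p}\llbracket T \rrbracket)$ of $\Delta$ with respect to this basis. For a ramified index $j \in \{r+1,\ldots,s\}$ the definition gives $\Delta(w_{j,\infty}^{unr}) = \omega_{k_{j}} \cdot w_{j,\infty}^{unr}$, which under the isomorphism contributes a single diagonal entry $\omega_{k_{j}}(T) = (1+T)^{p^{k_{j}}}-1$ and zeros elsewhere in column $j$. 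This matches $(D - B(T))_{ij}$: the definition of $B(T)$ puts $-\omega_{k_{j}}(T)$ on the $(j,j)$-entry and zero elsewhere in column $j$, while $D$ vanishes there.

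For an unramified index $j \in \{1,\ldots,r\}$ the computation uses the explicit description of the edges of $X_{\infty}^{unr} = X(\Gamma,\alpha)$: an edge $(e,1_{\Gamma})$ with $o(e) = v_{j}$ has terminus $(t(e), \alpha(e)) = \alpha(e) \cdot w_{t(e),\infty}^{unr}$. Applying the Laplacian gives
\[
\Delta(w_{j,\infty}^{unr}) = {\rm val}_{X}(v_{j}) \cdot w_{j,\infty}^{unr} - \sum_{e \in \mathbf{E}_{X,v_{j}}} \alpha(e) \cdot w_{t(e),\infty}^{unr},
\]
and after applying $\gamma \mapsto 1+T$ the group element $\alpha(e)$ becomes $\rho(\alpha(e)) = (1+T)^{\alpha(e)}$. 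Regrouping the sum by the terminus $v_{i}$ of $e$ shows that the $(i,j)$-entry of $M(T)$ is $\delta_{ij}\,{\rm val}_{X}(v_{j}) - \sum_{e : {\rm inc}(e) = (v_{j},v_{i})} \rho(\alpha(e))$, which is exactly $(D - B(T))_{ij}$ in view of items (1) and (3) in the definition of $B(T)$.

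Once $M(T) = D - B(T)$ is established, taking determinants gives $\det(\Delta) = f_{X,\mathcal{I},\alpha}(T)$, and combining with \cref{main_conj} yields
\[
{\rm char}_{\mathbb{Z}_{p}\llbracket T \rrbracket}({\rm Pic}_{\Lambda}^{0}) \cdot (T) = (f_{X,\mathcal{I},\alpha}(T)),
\]
as claimed. There is no serious obstacle here: the argument is a bookkeeping verification that the two matrix definitions agree entry by entry, the only mild subtlety being the sign and column/row conventions relating $\Delta$ acting on basis vectors to the matrix $D - B(T)$, and the observation that writing $X_{\infty}^{unr}$ as a covering graph with vertex $w_{i,\infty}^{unr} = (v_{i},1_{\Gamma})$ makes the edges out of $w_{i,\infty}^{unr}$ correspond bijectively, via translation by $\rho(\alpha(e))$, to the contributions in column $i$.
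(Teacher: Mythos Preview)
Your proposal is correct and follows exactly the paper's approach: the paper states that ``a simple calculation based on the definition of the operator $\Delta$ shows that $\det(\Delta) = \det(D - B(T))$'' and then invokes \cref{main_conj}, which under $\gamma \mapsto 1+T$ (so $\omega_{0} \mapsto T$) gives the desired identity. You have simply written out that simple calculation entry by entry, and the bookkeeping is right.
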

It follows from \cref{main_conj_power_sr} that we have
$$\mu({\rm Pic}_{\Lambda}^{0}) = \mu(f_{X,\mathcal{I},\alpha}(T)) \text{ and } \lambda({\rm Pic}_{\Lambda}^{0}) = \lambda(f_{X,\mathcal{I},\alpha}(T)) - 1, $$
and this allows us to provide a few numerical examples below.  The calculation of the number of spanning trees and of the power series $f_{X,\mathcal{I},\alpha}(T)$ below have been performed with the sofware \cite{SAGE}.

\begin{enumerate}
\item Let us revisit Example $2$ on page $451$ of \cite{Vallieres:2021}.  Let $p=2$, and let $X$ be the bouquet graph on two loops.  Pick an arbitrary orientation $S$ for $X$ and consider the function $\alpha:S \rightarrow \mathbb{Z} \subseteq \mathbb{Z}_{2}$ given by
$$s_{1} \mapsto \alpha(s_{1}) = 3, \text{ and } s_{3} \mapsto \alpha(s_{3}) = 5. $$
The corresponding (unramified) $\mathbb{Z}_{2}$-tower of graphs was studied in \cite[page 451]{Vallieres:2021}.  One can use \cref{connect} to check that all the finite graphs in the $\mathbb{Z}_{2}$-tower of graphs are connected.  Let us now introduce some ramification at the unique vertex of $X$.  Say, we let $I_{1} = 4\mathbb{Z}_{2}$.  Then, we obtain the branched $\mathbb{Z}_{2}$-tower of graphs

\begin{equation*}
\begin{tikzpicture}[baseline={([yshift=-1.7ex] current bounding box.center)}]
% create the node
\node[draw=none,minimum size=2cm,regular polygon,regular polygon sides=1] (a) {};
% draw a black dot in each vertex
\foreach \x in {1}
  \fill (a.corner \x) circle[radius=0.7pt];
\draw (a.corner 1) to [in=50,out=130,loop] (a.corner 1);
\draw (a.corner 1) to [in=50,out=130,distance = 0.5cm,loop] (a.corner 1);
\end{tikzpicture}
\leftarrow \, \, \,
\begin{tikzpicture}[baseline={([yshift=-0.6ex] current bounding box.center)}]
% create the node
\node[draw=none,minimum size=2cm,regular polygon,rotate = -45,regular polygon sides=4] (a) {};

% draw a black dot in each vertex
  \fill (a.corner 1) circle[radius=0.7pt];
  \fill (a.corner 3) circle[radius=0.7pt];
  
  \path (a.corner 1) edge [bend left=20] (a.corner 3);
  \path (a.corner 1) edge [bend left=60] (a.corner 3);
  \path (a.corner 1) edge [bend right=20] (a.corner 3);
  \path (a.corner 1) edge [bend right=60] (a.corner 3);

\end{tikzpicture}
 \, \leftarrow 
\begin{tikzpicture}[baseline={([yshift=-0.6ex] current bounding box.center)}]
% create the node
\node[draw=none,minimum size=2cm,regular polygon,regular polygon sides=4] (a) {};

% draw a black dot in each vertex
\foreach \x in {1,2,...,4}
\fill (a.corner \x) circle[radius=0.7pt];

\path (a.corner 1) edge [bend left=20] (a.corner 2);
\path (a.corner 1) edge [bend right=20] (a.corner 2);
\path (a.corner 2) edge [bend left=20] (a.corner 3);
\path (a.corner 2) edge [bend right=20] (a.corner 3);
\path (a.corner 3) edge [bend left=20] (a.corner 4);
\path (a.corner 3) edge [bend right=20] (a.corner 4);
\path (a.corner 4) edge [bend left=20] (a.corner 1);
\path (a.corner 4) edge [bend right=20] (a.corner 1);
\end{tikzpicture}
\leftarrow
\begin{tikzpicture}[baseline={([yshift=-0.6ex] current bounding box.center)}]
% create the node
\node[draw=none,minimum size=2cm,regular polygon,regular polygon sides=4] (a) {};
    
% draw a black dot in each vertex
\foreach \x in {1,2,...,4}
\fill (a.corner \x) circle[radius=0.7pt];
    
\path (a.corner 1) edge [bend left=20] (a.corner 2);
\path (a.corner 1) edge [bend right=20] (a.corner 2);
\path (a.corner 1) edge [bend left=10] (a.corner 2);
\path (a.corner 1) edge [bend right=10] (a.corner 2);

\path (a.corner 2) edge [bend left=20] (a.corner 3);
\path (a.corner 2) edge [bend right=20] (a.corner 3);
\path (a.corner 2) edge [bend left=10] (a.corner 3);
\path (a.corner 2) edge [bend right=10] (a.corner 3);

\path (a.corner 3) edge [bend left=20] (a.corner 4);
\path (a.corner 3) edge [bend right=20] (a.corner 4);
\path (a.corner 3) edge [bend left=10] (a.corner 4);
\path (a.corner 3) edge [bend right=10] (a.corner 4);

\path (a.corner 4) edge [bend left=20] (a.corner 1);
\path (a.corner 4) edge [bend right=20] (a.corner 1);
\path (a.corner 4) edge [bend left=10] (a.corner 1);
\path (a.corner 4) edge [bend right=10] (a.corner 1);
\end{tikzpicture}
\leftarrow 
\begin{tikzpicture}[baseline={([yshift=-0.6ex] current bounding box.center)}]
% create the node
\node[draw=none,minimum size=2cm,regular polygon,regular polygon sides=4] (a) {};
    
% draw a black dot in each vertex
\foreach \x in {1,2,...,4}
\fill (a.corner \x) circle[radius=0.7pt];
    
\path (a.corner 1) edge [bend left=20] (a.corner 2);
\path (a.corner 1) edge [bend right=20] (a.corner 2);
\path (a.corner 1) edge [bend left=10] (a.corner 2);
\path (a.corner 1) edge [bend right=10] (a.corner 2);
\path (a.corner 1) edge [bend left=30] (a.corner 2);
\path (a.corner 1) edge [bend right=30] (a.corner 2);
\path (a.corner 1) edge [bend left=40] (a.corner 2);
\path (a.corner 1) edge [bend right=40] (a.corner 2);

\path (a.corner 2) edge [bend left=20] (a.corner 3);
\path (a.corner 2) edge [bend right=20] (a.corner 3);
\path (a.corner 2) edge [bend left=10] (a.corner 3);
\path (a.corner 2) edge [bend right=10] (a.corner 3);
\path (a.corner 2) edge [bend left=30] (a.corner 3);
\path (a.corner 2) edge [bend right=30] (a.corner 3);
\path (a.corner 2) edge [bend left=40] (a.corner 3);
\path (a.corner 2) edge [bend right=40] (a.corner 3);

\path (a.corner 3) edge [bend left=20] (a.corner 4);
\path (a.corner 3) edge [bend right=20] (a.corner 4);
\path (a.corner 3) edge [bend left=10] (a.corner 4);
\path (a.corner 3) edge [bend right=10] (a.corner 4);
\path (a.corner 3) edge [bend left=30] (a.corner 4);
\path (a.corner 3) edge [bend right=30] (a.corner 4);
\path (a.corner 3) edge [bend left=40] (a.corner 4);
\path (a.corner 3) edge [bend right=40] (a.corner 4);

\path (a.corner 4) edge [bend left=20] (a.corner 1);
\path (a.corner 4) edge [bend right=20] (a.corner 1);
\path (a.corner 4) edge [bend left=10] (a.corner 1);
\path (a.corner 4) edge [bend right=10] (a.corner 1);
\path (a.corner 4) edge [bend left=30] (a.corner 1);
\path (a.corner 4) edge [bend right=30] (a.corner 1);
\path (a.corner 4) edge [bend left=40] (a.corner 1);
\path (a.corner 4) edge [bend right=40] (a.corner 1);
\end{tikzpicture}
\leftarrow \ldots
\end{equation*}
All the graphs $X_{n}$ are connected by \cref{conn_unr}.  The power series $f_{X,\mathcal{I},\alpha}(T)$ is given by
$$f_{X,\mathcal{I},\alpha}(T) =  4T + 6T^{2} + 4T^{3} + T^{4} \in \mathbb{Z}[T] \subseteq \mathbb{Z}_{2}\llbracket T \rrbracket,$$
so we should have $\mu({\rm Pic}_{\Lambda}^{0}) = 0$ and $\lambda({\rm Pic}_{\Lambda}^{0}) = 3$.  We calculate
$$\kappa_{0} = 1, \kappa_{1} = 2^{2},\kappa_{2} = 2^{5}, \kappa_{3} = 2^{8}, \ldots,$$
where we write $\kappa_{n}$ for $\kappa(X_{n})$, but it is simple to see combinatorially that $\kappa_{n} = 2^{3n-1}$.  We have
$${\rm ord}_{2}(\kappa(X_{n})) = 3n-1, $$
for all $n \ge 1$.

\item Let us revisit Example $2$ on page $55$ of \cite{DLRV}.  Let $p=3$ and let $X$ be the dumbbell graph
\begin{center}
\begin{tikzpicture}

%vertices
\draw[fill=black] (0,0) circle (0.7pt);
\draw[fill=black] (0.6,0) circle (0.7pt);        
        
%edges
\draw (0,0) edge (0.6,0);
\draw (0,0) edge [loop left, in = 155, out = 205,min distance=4mm] (0,0) ;
\draw (0.6,0) edge [loop right, in = 25, out = 335,min distance=4mm] (0.6,0) ;
\end{tikzpicture}
\end{center}
Pick the orientation $S$ to be arbitary on the loops and going from the first vertex on the left to the vertex on the right for the non-loop edge.  Label those directed edges from left to right so that $s_{2}$ corresponds to the non-loop directed edge going from left to right.  Take the function $\alpha:S \rightarrow \mathbb{Z} \subseteq \mathbb{Z}_{3}$ given by
$$s_{1} \mapsto \alpha(s_{1}) = 1, s_{2} \mapsto \alpha(s_{2}) =0, \text{ and } s_{3} \mapsto \alpha(s_{3}) = 11. $$  
One obtains the following (unramified) $\mathbb{Z}_{3}$-tower of finite graphs
\begin{equation*}
\begin{tikzpicture}[baseline={([yshift=-0.6ex] current bounding box.center)}]

%vertices
\draw[fill=black] (0,0) circle (0.7pt);
\draw[fill=black] (0.6,0) circle (0.7pt);        
                
%edges
\draw (0,0) edge  (0.6,0);
\draw (0,0) edge [loop left, in = 155, out = 205,min distance=4mm] (0,0) ;
\draw (0.6,0) edge [loop right, in = 25, out = 335,min distance=4mm] (0.6,0) ;
\end{tikzpicture}
\, \, \, \leftarrow \, \, \,
\begin{tikzpicture}[baseline={([yshift=-0.6ex] current bounding box.center)}]
% create the node
\node[draw=none,minimum size=2cm,regular polygon,regular polygon sides=3] (a) {};
\node[draw=none, minimum size=1.5cm,regular polygon,regular polygon sides=3] (b) {};
    
% draw a black dot in each vertex
\foreach \x in {1,2,...,3}
\fill (a.corner \x) circle[radius=0.7pt];
      
\foreach \y in {1,2,...,3}
\fill (b.corner \y) circle[radius=0.7pt];
      
\path (a.corner 1) edge (a.corner 2);
\path (a.corner 2) edge (a.corner 3);
\path (a.corner 3) edge (a.corner 1);

\path (a.corner 1) edge (b.corner 1);
\path (a.corner 2) edge (b.corner 2);
\path (a.corner 3) edge (b.corner 3);

\path (b.corner 1) edge (b.corner 3);
\path (b.corner 2) edge (b.corner 1);
\path (b.corner 3) edge (b.corner 2);

\end{tikzpicture}
\, \, \, \leftarrow \, \, \,
\begin{tikzpicture}[baseline={([yshift=-0.6ex] current bounding box.center)}]
% create the node
\node[draw=none,minimum size=2cm,regular polygon,regular polygon sides=9] (a) {};
\node[draw=none, minimum size=1.5cm,regular polygon,regular polygon sides=9] (b) {};
 
% draw a black dot in each vertex
\foreach \x in {1,2,...,9}
\fill (a.corner \x) circle[radius=0.7pt];
      
\foreach \y in {1,2,...,9}
\fill (b.corner \y) circle[radius=0.7pt];
      
\foreach \x\z in {1/2,2/3,3/4,4/5,5/6,6/7,7/8,8/9,9/1}
\path (a.corner \x) edge (a.corner \z);  
     
\foreach \x\z in {1/1,2/2,3/3,4/4,5/5,6/6,7/7,8/8,9/9} 
\path (a.corner \x) edge (b.corner \z);
      
\foreach \x\z in {1/3,2/4,3/5,4/6,5/7,6/8,7/9,8/1,9/2} 
\path (b.corner \x) edge (b.corner \z);
      
\end{tikzpicture}
\, \, \, \leftarrow \, \, \,
\begin{tikzpicture}[baseline={([yshift=-0.6ex] current bounding box.center)}]
% create the node
\node[draw=none,minimum size=2cm,regular polygon,regular polygon sides=27] (a) {};
\node[draw=none, minimum size=1.5cm,regular polygon,regular polygon sides=27] (b) {};
    
% draw a black dot in each vertex
\foreach \x in {1,2,...,27}
\fill (a.corner \x) circle[radius=0.7pt];
      
\foreach \y in {1,2,...,27}
\fill (b.corner \y) circle[radius=0.7pt];
      
\foreach \x\z in {1/2,2/3,3/4,4/5,5/6,6/7,7/8,8/9,9/10,10/11,11/12,12/13,13/14,14/15,15/16,16/17,17/18,18/19,19/20,20/21,21/22,22/23,23/24,24/25,25/26,26/27,27/1}
\path (a.corner \x) edge (a.corner \z);  
     
\foreach \x\z in {1/1,2/2,3/3,4/4,5/5,6/6,7/7,8/8,9/9,10/10,11/11,12/12,13/13,14/14,15/15,16/16,17/17,18/18,19/19,20/20,21/21,22/22,23/23,24/24,25/25,26/26,27/27} 
\path (a.corner \x) edge (b.corner \z);
      
\foreach \x\z in {1/12,2/13,3/14,4/15,5/16,6/17,7/18,8/19,9/20,10/21,11/22,12/23,13/24,14/25,15/26,16/27,17/1,18/2,19/3,20/4,21/5,22/6,23/7,24/8,25/9,26/10,27/11}
\path (b.corner \x) edge (b.corner \z);
      
\end{tikzpicture}
\, \, \, \leftarrow \ldots
\end{equation*}
The graphs $X_{n}$ are the generalized Petersen graphs $G(3^{n},11)$ and are connected by \cref{connect}.  The power series $f_{X,\alpha}(T)$ is given by
$$f_{X,\alpha}(T) =  -122T^{2} + 122T^{3} -1211T^{4} + \ldots \in \mathbb{Z}\llbracket T \rrbracket \subseteq \mathbb{Z}_{3}\llbracket T \rrbracket$$
so we should have $\mu({\rm Pic}_{\Lambda}^{0}) = 0$ and $\lambda({\rm Pic}_{\Lambda}^{0}) = 1$.  We calculate
$$\kappa_{0} = 1, \kappa_{1} = 3 \cdot 5^{2} , \kappa_{2} = 3^{2} \cdot 5^{2} \cdot 71^{2}, \kappa_{3} = 3^{3} \cdot 5^{2} \cdot 71^{2} \cdot 109^{2} \cdot 13931^{2}, \ldots,$$
where we write $\kappa_{n}$ for $\kappa(X_{n})$.  We have
$${\rm ord}_{3}(\kappa(X_{n})) = n, $$
for all $n \ge 0$.  Let us now introduce some ramification at the second vertex only, so let $I_{1} = 0$ and $I_{2} = 3\mathbb{Z}_{3}$.  Then, we obtain the branched $\mathbb{Z}_{3}$-tower of graphs
\begin{equation*}
\begin{tikzpicture}[baseline={([yshift=-0.6ex] current bounding box.center)}]

%vertices
\draw[fill=black] (0,0) circle (0.7pt);
\draw[fill=black] (0.6,0) circle (0.7pt);        
                
%edges
\draw (0,0) edge (0.6,0);
\draw (0,0) edge [loop left, in = 155, out = 205,min distance=4mm] (0,0) ;
\draw (0.6,0) edge [loop right, in = 25, out = 335,min distance=4mm] (0.6,0) ;
\end{tikzpicture}
\, \, \, \leftarrow \, \, \,
\begin{tikzpicture}[baseline={([yshift=-0.6ex] current bounding box.center)}]
% create the node
\node[draw=none,minimum size=2cm,regular polygon,regular polygon sides=3] (a) {};
\node[draw=none, minimum size=1.5cm,regular polygon,regular polygon sides=3] (b) {};

% draw a black dot in each vertex
\foreach \x in {1,2,...,3}
\fill (a.corner \x) circle[radius=0.7pt];
  
\foreach \y in {1,2,...,3}
\fill (b.corner \y) circle[radius=0.7pt];
  
\path (a.corner 1) edge (a.corner 2);
\path (a.corner 2) edge (a.corner 3);
\path (a.corner 3) edge (a.corner 1);

\path (a.corner 1) edge (b.corner 1);
\path (a.corner 2) edge (b.corner 2);
\path (a.corner 3) edge (b.corner 3);

\path (b.corner 1) edge (b.corner 3);
\path (b.corner 2) edge (b.corner 1);
\path (b.corner 3) edge (b.corner 2);

\end{tikzpicture}
\, \, \, \leftarrow \, \, \,
\begin{tikzpicture}[baseline={([yshift=-0.6ex] current bounding box.center)}]
% create the node
\node[draw=none,minimum size=2cm,regular polygon,regular polygon sides=9] (a) {};
\node[draw=none, minimum size=1.5cm,regular polygon,regular polygon sides=3] (b) {};
 
% draw a black dot in each vertex
\foreach \x in {1,2,...,9}
\fill (a.corner \x) circle[radius=0.7pt];
  
\foreach \y in {1,2,3}
\fill (b.corner \y) circle[radius=0.7pt];
  
\foreach \x\z in {1/2,2/3,3/4,4/5,5/6,6/7,7/8,8/9,9/1}
\path (a.corner \x) edge (a.corner \z);  
 
\foreach \x\z in {1/1,2/2,3/3,4/1,5/2,6/3,7/1,8/2,9/3} 
\path (a.corner \x) edge (b.corner \z);

\path (b.corner 1) edge [bend left=10] (b.corner 3);
\path (b.corner 1) edge [bend right=10] (b.corner 3);
\path (b.corner 1) edge  (b.corner 3);

\path (b.corner 2) edge [bend left=10] (b.corner 1);
\path (b.corner 2) edge [bend right=10] (b.corner 1);
\path (b.corner 2) edge  (b.corner 1);

\path (b.corner 3) edge [bend left=10] (b.corner 2);
\path (b.corner 3) edge [bend right=10] (b.corner 2);
\path (b.corner 2) edge  (b.corner 3);
  
\end{tikzpicture}
\, \, \, \leftarrow \, \, \,
\begin{tikzpicture}[baseline={([yshift=-0.6ex] current bounding box.center)}]
% create the node
\node[draw=none,minimum size=2cm,regular polygon,regular polygon sides=27] (a) {};
\node[draw=none, minimum size=1.5cm,regular polygon,regular polygon sides=3] (b) {};

% draw a black dot in each vertex
\foreach \x in {1,2,...,27}
\fill (a.corner \x) circle[radius=0.7pt];
  
\foreach \y in {1,2,3}
\fill (b.corner \y) circle[radius=0.7pt];
  
\foreach \x\z in {1/2,2/3,3/4,4/5,5/6,6/7,7/8,8/9,9/10,10/11,11/12,12/13,13/14,14/15,15/16,16/17,17/18,18/19,19/20,20/21,21/22,22/23,23/24,24/25,25/26,26/27,27/1}
\path (a.corner \x) edge (a.corner \z);  
 
\foreach \x\z in {1/1,2/2,3/3,4/1,5/2,6/3,7/1,8/2,9/3,10/1,11/2,12/3,13/1,14/2,15/3,16/1,17/2,18/3,19/1,20/2,21/3,22/1,23/2,24/3,25/1,26/2,27/3} 
\path (a.corner \x) edge (b.corner \z);

\path (b.corner 1) edge [bend left=4] (b.corner 3);
\path (b.corner 1) edge [bend left=8] (b.corner 3);
\path (b.corner 1) edge [bend left=12] (b.corner 3);
\path (b.corner 1) edge [bend left=16] (b.corner 3);

\path (b.corner 1) edge  (b.corner 3);

\path (b.corner 1) edge [bend right=4] (b.corner 3);
\path (b.corner 1) edge [bend right=8] (b.corner 3);
\path (b.corner 1) edge [bend right=12] (b.corner 3);
\path (b.corner 1) edge [bend right=16] (b.corner 3);

\path (b.corner 1) edge [bend left=4] (b.corner 2);
\path (b.corner 1) edge [bend left=8] (b.corner 2);
\path (b.corner 1) edge [bend left=12] (b.corner 2);
\path (b.corner 1) edge [bend left=16] (b.corner 2);

\path (b.corner 1) edge  (b.corner 2);

\path (b.corner 1) edge [bend right=4] (b.corner 2);
\path (b.corner 1) edge [bend right=8] (b.corner 2);
\path (b.corner 1) edge [bend right=12] (b.corner 2);
\path (b.corner 1) edge [bend right=16] (b.corner 2);

\path (b.corner 3) edge [bend left=4] (b.corner 2);
\path (b.corner 3) edge [bend left=8] (b.corner 2);
\path (b.corner 3) edge [bend left=12] (b.corner 2);
\path (b.corner 3) edge [bend left=16] (b.corner 2);

\path (b.corner 3) edge  (b.corner 2);

\path (b.corner 3) edge [bend right=4] (b.corner 2);
\path (b.corner 3) edge [bend right=8] (b.corner 2);
\path (b.corner 3) edge [bend right=12] (b.corner 2);
\path (b.corner 3) edge [bend right=16] (b.corner 2);
  
\end{tikzpicture}
\, \, \, \leftarrow \ldots
\end{equation*}
The graphs $X_{n}$ are connected by \cref{conn_unr}.  The power series $f_{X,\mathcal{I},\alpha}(T)$ is given by
$$f_{X,\mathcal{I},\alpha}(T) =  3T + 3T^{2} - 2T^{3} - \ldots \in \mathbb{Z}\llbracket T \rrbracket \subseteq \mathbb{Z}_{3}\llbracket T \rrbracket,$$
so we should have $\mu({\rm Pic}_{\Lambda}^{0}) = 0$ and $\lambda({\rm Pic}_{\Lambda}^{0}) = 3-1 = 2$.  We calculate
$$\kappa_{0} = 1, \kappa_{1} = 3 \cdot 5^{2},\kappa_{2} = 3^{3} \cdot 5^{2} \cdot 19^{2}, \kappa_{3} = 3^{5} \cdot 5^{2} \cdot 19^{2} \cdot 5779^{2},$$
$$\kappa_{4} = 3^{7} \cdot 5^{2} \cdot 19^{2} \cdot 3079^{2} \cdot 5779^{2} \cdot 62650261^{2}, \ldots,$$
where we let $\kappa_{n}$ be $\kappa(X_{n})$.  We have
$${\rm ord}_{3}(\kappa(X_{n})) = 2n-1, $$
for all $n \ge 1$.

\item Let $p=3$, and let $X$ be the graph
\begin{center}
\begin{tikzpicture}

%vertices
\draw[fill=black] (0,0) circle (0.7pt);
\draw[fill=black] (0.6,0) circle (0.7pt);        
                           
%edges
\draw (0,0) edge [bend left=20] (0.6,0);
\draw (0,0) edge (0.6,0);
\draw (0,0) edge [bend right=20] (0.6,0);
\draw (0,0) edge [loop left, in = 155, out = 205,min distance=4mm] (0,0) ;
\draw (0,0) edge [loop left, in = 145, out = 215,min distance=5mm] (0,0) ;
\draw (0,0) edge [loop left, in = 135, out = 225,min distance=6.5mm] (0,0) ;
    
\draw (0.6,0) edge [loop right, in = 25, out = 335,min distance=4mm] (0.6,0) ;
\end{tikzpicture}
\end{center}
Take the orientation $S$ of $X$ as follows:  Direct the loops arbitrarily, and for the non-loop edges, direct them from left to right.  Label the directed edges $S = \{s_{1},s_{2},s_{3},\ldots,s_{7} \}$ so that $s_{4},s_{5},s_{6}$ are the non-loop directed edges, $s_{1},s_{2},s_{3}$ the directed loops at the first vertex on the left, and $s_{7}$ the directed loop at the second vertex on the right.  Take the function $\alpha:S \rightarrow \mathbb{Z} \subseteq \mathbb{Z}_{3}$ given by
$$s_{i} \mapsto \alpha(s_{i}) = 1, s_{j} \mapsto \alpha(s_{j}) = 0, \text{ and } s_{7} \mapsto \alpha(s_{7}) = 11, $$
where $i=1,2,3$, and $j=4,5,6$.  Let us introduce some ramification at the second vertex only, say $I_{2} = 3\mathbb{Z}_{3}$.  We obtain the branched $\mathbb{Z}_{3}$-tower of graphs
\begin{equation*}
\begin{tikzpicture}[baseline={([yshift=-0.6ex] current bounding box.center)}]

%vertices
\draw[fill=black] (0,0) circle (0.7pt);
\draw[fill=black] (0.6,0) circle (0.7pt);        
                       
%edges
\draw (0,0) edge [bend left=20] (0.6,0);
\draw (0,0) edge (0.6,0);
\draw (0,0) edge [bend right=20] (0.6,0);
\draw (0,0) edge [loop left, in = 155, out = 205,min distance=4mm] (0,0) ;
\draw (0,0) edge [loop left, in = 145, out = 215,min distance=5mm] (0,0) ;
\draw (0,0) edge [loop left, in = 135, out = 225,min distance=6.5mm] (0,0) ;

\draw (0.6,0) edge [loop right, in = 25, out = 335,min distance=4mm] (0.6,0) ;
\end{tikzpicture}
\, \, \, \leftarrow \, \, \,
\begin{tikzpicture}[baseline={([yshift=-0.6ex] current bounding box.center)}]
% create the node
\node[draw=none,minimum size=2cm,regular polygon,regular polygon sides=3] (a) {};
\node[draw=none, minimum size=1cm,regular polygon,regular polygon sides=3] (b) {};

% draw a black dot in each vertex
\foreach \x in {1,2,...,3}
\fill (a.corner \x) circle[radius=0.7pt];

\foreach \y in {1,2,...,3}
\fill (b.corner \y) circle[radius=0.7pt];
  
\path (a.corner 1) edge (a.corner 2);
\path (a.corner 2) edge (a.corner 3);
\path (a.corner 3) edge (a.corner 1);

\path (a.corner 1) edge [bend left=10] (a.corner 2);
\path (a.corner 2) edge [bend left=10] (a.corner 3);
\path (a.corner 3) edge [bend left=10] (a.corner 1);

\path (a.corner 1) edge [bend right=10] (a.corner 2);
\path (a.corner 2) edge [bend right=10] (a.corner 3);
\path (a.corner 3) edge [bend right=10] (a.corner 1);

\path (a.corner 1) edge [bend left=15] (b.corner 1);
\path (a.corner 2) edge [bend left=15] (b.corner 2);
\path (a.corner 3) edge [bend left=15] (b.corner 3);

\path (a.corner 1) edge [bend right=15] (b.corner 1);
\path (a.corner 2) edge [bend right=15] (b.corner 2);
\path (a.corner 3) edge [bend right=15] (b.corner 3);

\path (a.corner 1) edge (b.corner 1);
\path (a.corner 2) edge (b.corner 2);
\path (a.corner 3) edge (b.corner 3);

\path (b.corner 1) edge (b.corner 3);
\path (b.corner 2) edge (b.corner 1);
\path (b.corner 3) edge (b.corner 2);

\end{tikzpicture}
\, \, \, \leftarrow \, \, \,
\begin{tikzpicture}[baseline={([yshift=-0.6ex] current bounding box.center)}]
% create the node
\node[draw=none,minimum size=2cm,regular polygon,regular polygon sides=9] (a) {};
\node[draw=none, minimum size=1cm,regular polygon,regular polygon sides=3] (b) {};
 
% draw a black dot in each vertex
\foreach \x in {1,2,...,9}
\fill (a.corner \x) circle[radius=0.7pt];
  
\foreach \y in {1,2,3}
\fill (b.corner \y) circle[radius=0.7pt];
  
\foreach \x\z in {1/2,2/3,3/4,4/5,5/6,6/7,7/8,8/9,9/1}
\path (a.corner \x) edge (a.corner \z);  
\foreach \x\z in {1/2,2/3,3/4,4/5,5/6,6/7,7/8,8/9,9/1}
\path (a.corner \x) edge [bend left=10] (a.corner \z);  
\foreach \x\z in {1/2,2/3,3/4,4/5,5/6,6/7,7/8,8/9,9/1}
\path (a.corner \x) edge [bend right=10] (a.corner \z); 
 
\foreach \x\z in {1/1,2/2,3/3,4/1,5/2,6/3,7/1,8/2,9/3} 
\path (a.corner \x) edge (b.corner \z);
\foreach \x\z in {1/1,2/2,3/3,4/1,5/2,6/3,7/1,8/2,9/3} 
\path (a.corner \x) edge [bend left=15] (b.corner \z);
\foreach \x\z in {1/1,2/2,3/3,4/1,5/2,6/3,7/1,8/2,9/3} 
\path (a.corner \x) edge [bend right=15] (b.corner \z);

\path (b.corner 1) edge [bend left=10] (b.corner 3);
\path (b.corner 1) edge [bend right=10] (b.corner 3);
\path (b.corner 1) edge  (b.corner 3);

\path (b.corner 2) edge [bend left=10] (b.corner 1);
\path (b.corner 2) edge [bend right=10] (b.corner 1);
\path (b.corner 2) edge  (b.corner 1);

\path (b.corner 3) edge [bend left=10] (b.corner 2);
\path (b.corner 3) edge [bend right=10] (b.corner 2);
\path (b.corner 2) edge  (b.corner 3);
  
\end{tikzpicture}
\, \, \, \leftarrow \, \, \,
\begin{tikzpicture}[baseline={([yshift=-0.6ex] current bounding box.center)}]
% create the node
\node[draw=none,minimum size=2cm,regular polygon,regular polygon sides=27] (a) {};
\node[draw=none, minimum size=1cm,regular polygon,regular polygon sides=3] (b) {};

% draw a black dot in each vertex
\foreach \x in {1,2,...,27}
\fill (a.corner \x) circle[radius=0.7pt];
  
\foreach \y in {1,2,3}
\fill (b.corner \y) circle[radius=0.7pt];
  
\foreach \x\z in {1/2,2/3,3/4,4/5,5/6,6/7,7/8,8/9,9/10,10/11,11/12,12/13,13/14,14/15,15/16,16/17,17/18,18/19,19/20,20/21,21/22,22/23,23/24,24/25,25/26,26/27,27/1}
\path (a.corner \x) edge (a.corner \z);  
\foreach \x\z in {1/2,2/3,3/4,4/5,5/6,6/7,7/8,8/9,9/10,10/11,11/12,12/13,13/14,14/15,15/16,16/17,17/18,18/19,19/20,20/21,21/22,22/23,23/24,24/25,25/26,26/27,27/1}
\path (a.corner \x) edge [bend left=30] (a.corner \z); 
\foreach \x\z in {1/2,2/3,3/4,4/5,5/6,6/7,7/8,8/9,9/10,10/11,11/12,12/13,13/14,14/15,15/16,16/17,17/18,18/19,19/20,20/21,21/22,22/23,23/24,24/25,25/26,26/27,27/1}
\path (a.corner \x) edge [bend right=30] (a.corner \z); 
 
\foreach \x\z in {1/1,2/2,3/3,4/1,5/2,6/3,7/1,8/2,9/3,10/1,11/2,12/3,13/1,14/2,15/3,16/1,17/2,18/3,19/1,20/2,21/3,22/1,23/2,24/3,25/1,26/2,27/3} 
\path (a.corner \x) edge [bend left=10] (b.corner \z);
\foreach \x\z in {1/1,2/2,3/3,4/1,5/2,6/3,7/1,8/2,9/3,10/1,11/2,12/3,13/1,14/2,15/3,16/1,17/2,18/3,19/1,20/2,21/3,22/1,23/2,24/3,25/1,26/2,27/3} 
\path (a.corner \x) edge [bend right=10] (b.corner \z);
\foreach \x\z in {1/1,2/2,3/3,4/1,5/2,6/3,7/1,8/2,9/3,10/1,11/2,12/3,13/1,14/2,15/3,16/1,17/2,18/3,19/1,20/2,21/3,22/1,23/2,24/3,25/1,26/2,27/3} 
\path (a.corner \x) edge (b.corner \z);

\path (b.corner 1) edge [bend left=4] (b.corner 3);
\path (b.corner 1) edge [bend left=8] (b.corner 3);
\path (b.corner 1) edge [bend left=12] (b.corner 3);
\path (b.corner 1) edge [bend left=16] (b.corner 3);

\path (b.corner 1) edge  (b.corner 3);

\path (b.corner 1) edge [bend right=4] (b.corner 3);
\path (b.corner 1) edge [bend right=8] (b.corner 3);
\path (b.corner 1) edge [bend right=12] (b.corner 3);
\path (b.corner 1) edge [bend right=16] (b.corner 3);

\path (b.corner 1) edge [bend left=4] (b.corner 2);
\path (b.corner 1) edge [bend left=8] (b.corner 2);
\path (b.corner 1) edge [bend left=12] (b.corner 2);
\path (b.corner 1) edge [bend left=16] (b.corner 2);

\path (b.corner 1) edge  (b.corner 2);

\path (b.corner 1) edge [bend right=4] (b.corner 2);
\path (b.corner 1) edge [bend right=8] (b.corner 2);
\path (b.corner 1) edge [bend right=12] (b.corner 2);
\path (b.corner 1) edge [bend right=16] (b.corner 2);

\path (b.corner 3) edge [bend left=4] (b.corner 2);
\path (b.corner 3) edge [bend left=8] (b.corner 2);
\path (b.corner 3) edge [bend left=12] (b.corner 2);
\path (b.corner 3) edge [bend left=16] (b.corner 2);

\path (b.corner 3) edge  (b.corner 2);

\path (b.corner 3) edge [bend right=4] (b.corner 2);
\path (b.corner 3) edge [bend right=8] (b.corner 2);
\path (b.corner 3) edge [bend right=12] (b.corner 2);
\path (b.corner 3) edge [bend right=16] (b.corner 2);
  
\end{tikzpicture}
\, \, \, \leftarrow \ldots
\end{equation*}
Using \cref{connect} and \cref{conn_unr}, one checks that all the finite graphs $X_{n}$ are connected.  The power series $f_{X,\mathcal{I},\alpha}(T)$ is given by
$$f_{X,\mathcal{I},\alpha}(T) =  3(3T + 3T^{2} - 2T^{3} + \ldots) \in \mathbb{Z}\llbracket T \rrbracket \subseteq \mathbb{Z}_{3}\llbracket T \rrbracket,$$
so we should have $\mu({\rm Pic}_{\Lambda}^{0}) = 1$ and $\lambda({\rm Pic}_{\Lambda}^{0}) = 3-1 = 2$.  We calculate
$$\kappa_{0} = 3, \kappa_{1} = 3^{4} \cdot 7^{2},\kappa_{2} = 3^{12} \cdot 7^{2} \cdot 19^{2}, \kappa_{3} = 3^{32} \cdot 7^{2} \cdot 19^{2} \cdot 5779^{2},$$
$$\kappa_{4} = 3^{88} \cdot 7^{2} \cdot 19^{2} \cdot 3079^{2} \cdot 5779^{2} \cdot 62650261^{2}, \ldots,$$
where we let again $\kappa_{n} = \kappa(X_{n})$.  We have
$${\rm ord}_{3}(\kappa(X_{n})) = 3^{n} +2n-1, $$
for all $n \ge 1$.

\end{enumerate}

\bibliographystyle{alpha}
\bibliography{references}
\end{document}